\newtheorem{assumption}{Assumption}
\newtheorem{problem}{Problem}
\newtheorem{proposition}{Proposition}
\newtheorem{remark}{Remark}
\newtheorem{theorem}{Theorem}
\newtheorem{definition}{Definition}
\newtheorem{corollary}{Corollary}
\newtheorem{lemma}{Lemma}
\newtheorem{example}{Example}
\newcommand{\QED}{\hfill$\blacksquare$}
\DeclareMathOperator*{\argmax}{arg\,max}
\begin{document}

\title{Payoff Dynamics Model and Evolutionary Dynamics Model: Feedback and Convergence to Equilibria}

\author{Shinkyu Park, Nuno C. Martins, and Jeff S. Shamma
\thanks{Shinkyu Park is with the Department of Mechanical and Aerospace Engineering, Princeton University, Princeton, NJ 08544, USA. \texttt{shinkyu@princeton.edu}}
\thanks{Nuno Miguel Lara Cintra Martins is with the Department of Electrical and Computer Engineering and the Institute for Systems Research, University of Maryland, College Park, MD 20742, USA. \texttt{nmartins@umd.edu}}
\thanks{Jeff S. Shamma is with King Abdullah University of Science and Technology (KAUST), Computer, Electrical and Mathematical Science and Engineering Division (CEMSE), Thuwal 23955-6900, Saudi Arabia.\texttt{jeff.shamma@kaust.edu.sa}}}

\maketitle

\begin{abstract}
This tutorial article puts forth a framework to analyze the noncooperative strategic interactions among the members of a large population of bounded rationality agents. Our approach hinges on, unifies and generalizes existing methods and models predicated in evolutionary and population games. It does so by adopting a system-theoretic formalism that is well-suited for a broad engineering audience familiar with the basic tenets of nonlinear dynamical systems, Lyapunov stability, storage functions, and passivity. The framework is pertinent for engineering applications in which a large number of agents have the authority to select and repeatedly revise their strategies. A mechanism that is inherent to the problem at hand or is designed and implemented by a coordinator ascribes a payoff to each possible strategy. Typically, the agents will prioritize switching to strategies whose payoff is either higher than the current one or exceeds the population average. The article puts forth a systematic methodology to characterize the stability of the dynamical system that results from the feedback interaction between the payoff mechanism and the revision process. This is important because the set of stable equilibria is an accurate predictor of the population's long-term behavior. The article includes rigorous proofs and examples of application of the stability results, which also extend the state of the art because, unlike previously published work, they allow for a rather general class of dynamical payoff mechanisms. The new results and concepts proposed here are thoroughly compared to previous work, methods and applications of evolutionary and population games.

\end{abstract}

\begin{IEEEkeywords}
Population games, game theory, passivity, Nash equilibrium, evolutionary dynamics, Lyapunov stability, optimization
\end{IEEEkeywords}


\section{Introduction}

This tutorial puts forth a system-theoretic framework and methods to characterize the stability properties of the noncooperative strategic interactions among the members of a large population of bounded rationality agents, in response to a payoff mechanism. Although, to simplify our notation, we assume that there is a single population, the techniques and methods put forth in this article can be readily adapted to the multi-population case~\cite{Park2019From-Population,Arcak2020Dissipativity-T}.

We adopt the evolutionary dynamics paradigm well-documented in~\cite{Sandholm2015Handbook-of-gam,Sandholm2010Population-Game,Weibull1995Evolutionary-ga, Hofbauer1998Evolutionary-ga} according to which the members of the population, which we call agents, repeatedly revise their strategy choices as specified by a so-called revision protocol (protocol for short). The population is protocol-homogeneous because all of its agents follow the same protocol, but is otherwise strategy-heterogeneous considering that the agents are allowed to concurrently select distinct strategies. Although mixed strategies are not allowed, as at every instant each agent chooses exactly one out of a set of \underline{$n$ possible strategies}, the strategy selection typically involves randomization, in which case the protocol is probabilistic. The identity of each agent is unimportant, and consequently all the information that is relevant, at any given instant, is encapsulated in the so-called {\it population state} vector whose $n$ entries are proportional to the share of the population adopting each strategy. Henceforth, we refer to the set of all possible population states, or equivalently the state space, as the strategy profile set. Hence, every strategy profile is a vector with $n$ nonnegative entries adding up to the so-called population mass~$m$.

In most related prior work, a so-called {\it population game}~\cite{Blume2018The-economy-as-} determines at any given instant the $n$-dimensional payoff vector, which quantifies the payoff associated with each strategy, as a function of the population state. A population game may represent a pricing scheme that is implemented by a coordinator, or it may be inherent to the problem at hand, such as when it results from the interactions among the agents and with the environment. Examples include congestion population games for traffic assignment~\cite{Beckmann1956Studies-in-the-,Sheffi1985Urban-transport} and others, such as with wars of attrition~\cite{Bishop1978A-generalized-w}, that are obtained via matching from a stage normal-form game. The set of Nash equilibria of a population game can be functionally defined in the usual way, and may be interpreted in the mass-action sense discussed in~\cite{Weibull1995The-mass-action}, which was originally suggested in~\cite{Jr.1951Non-Cooperative}.

\subsection{Population games, protocols and stability}
\label{subsec:IntroPopGamesProtStab}
Our work focuses on the protocol classes surveyed in~\cite{Sandholm2010Population-Game,Weibull1995Evolutionary-ga}, which are inspired, to a great extent, on elementary bounded rationality mechanisms of evolutionary biology~\cite{Hofbauer1998Evolutionary-ga}. As is explained in~\cite[\S2.3]{Sandholm2010Population-Game} and~\cite{Sandholm2010Pairwise-compar}, the protocol models how much each agent knows about the payoff vector and the population state, and how it uses the available information to revise its strategy. Consequently, under the widely-used assumption that agents revise their strategies at times determined by a Poisson process, the resulting population state can be viewed as the Markov process associated with the system formed by the feedback interconnection between the evolutionary dynamics specified by the protocol and the population game. The analysis in~\cite{Taylor1978Evolutionarily-}, which shows that the evolutionarily stable strategies (ESS) characterized in~\cite{Maynard-Smith1973The-logic-of-an} are local attractors for the expectation of the population state, motivated the subsequent early work summarized in~\cite{Blume2018The-economy-as-} exploring a Markovian approach to establish in a probabilistic sense that in the long run the population state remains near, and in some cases converges to, certain Nash equilibria. Furthermore, the concept of stochastically stable set was defined and characterized in~\cite{Foster1990Stochastic-evol} to account for the effect of persistent stochastic perturbations and~\cite{Kandori1993Learning-mutati} extended these notions to examine equilibrium selection. 

\subsection{Mean population sate and deterministic payoff}
\label{sec:IntroMeanDynamicApproach}
In this article, we consider the case in which the population size is large enough that Khinchin's law of large numbers can be invoked to conclude that realizations of the population state at each instant will remain close to its expected value with high probability. This justifies our decision to focus on the so-called \underline{\it mean population state}, which akin to the approach in~\cite{Taylor1978Evolutionarily-} represents the expectation of the population state that can be obtained as the deterministic solution of a system of ordinary differential equations. 

Likewise, we restrict our analysis to \underline{\it deterministic payoffs} that are obtained in terms of the mean population state according to a soon to be described payoff dynamics model (PDM) of which population games are a particular case. As we argue precisely in \S\ref{sec:LargePopulationLimit}, the deterministic payoff will approximate the payoff of an associated finite population formulation with increasing fidelity as the size of the population tends to infinity.

\subsubsection{Nash equilibria of population games and long term behavior of the mean population state}
\label{sec:MeanDynamicsPopulationGames}
Before we outline the main methods and concepts put forth in this article, we proceed to surveying a few existing concepts and convergence results for the particular case in which a population game governs the deterministic payoff. In this context, the set of ordinary differential equations governing the mean population state, and consequently also the payoff, is commonly referred to as \underline{\it mean dynamic}.

As is discussed in~\cite{Sandholm2003Evolution-and-e} for a general class of protocols and population games, and further refined in~\cite{Benaim2003Deterministic-a}, results in~\cite{Kurtz1970Solutions-of-Or} guarantee that the mean population state approximates the population state with arbitrary accuracy uniformly over any given finite time horizon with probability approaching one as the number of agents tends to infinity, which highlights the importance of analyzing the long-term evolution of the mean population state and characterizing the associated globally attracting sets. Notably, the time horizon in the uniform approximation results reported in~\cite{Sandholm2003Evolution-and-e,Benaim2003Deterministic-a} can be selected large enough to guarantee that with high probability the population state of a sufficiently large population will tend to the smallest globally attractive set. As noted in~\cite{Blume2018The-economy-as-}, this conclusion has special relevance when a subset of Nash equilibria is globally attractive. It demonstrates the interesting fact that, even though the agents follow simple bounded rationality protocols that do not require knowledge of the structure of the population game, the population is still capable of self-organizing in a way that the population state tends to a subset of Nash equilibria. This not only reinforces the importance of the mass-action interpretation of Nash equilibria, but it also justifies regarding the globally attractive subsets of Nash equilibria as reliable predictors of the long-term behavior of the population. 

In many cases of interest it is possible to construct a Lyapunov function~\cite{Khalil1995Nonlinear-syste} establishing that the entire set of Nash equilibria is globally asymptotically stable and hence also attractive. The historical perspective and analysis in~\cite{Hofbauer2000From-Nash-and-B} popularized and fostered very significant work that uses the Lyapunov approach in conjunction with the so-called positive correlation and Nash stationarity concepts, which are particularly insightful in the context of evolutionary dynamics, to establish global attractivity or, in some cases, global asymptotic stability of the Nash equilibrium set for widely-used protocols and population game classes, such as potential~\cite{Sandholm2001Potential-games,Monderer1996Potential-games} and contractive\footnote{This class of population games is also referred to as ``stable" in~\cite{Hofbauer2007Stable-games}. Following the recent naming convention in~\cite{Sandholm2015Handbook-of-gam}, we prefer to use the qualifier ``contractive" to avoid confusion with other concepts of stability used throughout the article. Similarly, we will use ``strictly contractive" to qualify a population game that would be ``strictly stable" according to~\cite{Hofbauer2007Stable-games}.} games~\cite{Hofbauer2009Stable-games-an}. Of key importance in~\cite{Hofbauer2009Stable-games-an,Sandholm2001Potential-games} is the fact that, for a given constant payoff, the mean population state governed by a Nash stationary protocol is also constant if and only if it is a best response, which establishes an equivalence between the rest points of the mean dynamic and the Nash equilibria of the population game. The concepts of Nash stationarity and positive correlation are also explained in detail in~\cite[Chapter 5]{Sandholm2010Population-Game}, where they are viewed as properties of the mean dynamic associated with each protocol. Instead, here we adopt the convention, which will be useful later on, of viewing Nash stationarity and positive correlation as properties of the protocol.

Theorems~\cite[12.B.3]{Sandholm2010Population-Game} and~\cite[12.B.5]{Sandholm2010Population-Game}, which are derived from work in~\cite{Benaim1998Recursive-algor} and~\cite{Benaim1999Stochastic-appr}, offer yet another rationale associating a globally asymptotically stable set of equilibria of the mean dynamic, when one exists, with the long-term behavior of the population state for large populations. More specifically, these results ascertain under unrestrictive conditions that the measure, with respect to the stationary distribution of the population state, tends to one within any open set containing a globally asymptotically stable set as the population size tends to infinity. This implies that as the population grows, the stationary distribution of the population state tends to concentrate around the smallest globally asymptotically stable set.

\subsection{Outline of key concepts and methods}
\label{subsec:OutlineOfContrib}

From a dynamical systems theory perspective~\cite{Rugh1996Linear-system-t,Khalil1995Nonlinear-syste}, a population game would be qualified as {\it memoryless} because it acts as an instantaneous map from the mean population state to the deterministic payoff vector. Consequently, population games cannot capture dynamics in the payoff mechanism such as when there is inertia or anticipation effects in the agents' perception of the reward for each strategy. 

The following is an outline of the results, methods and concepts that undergird the framework put forth in this article:
\begin{enumerate}[label=(\roman*)]
	\item In \S\ref{sec:DPG}, we propose a class of payoff dynamics models (PDM) that includes as particular cases the dynamically modified payoffs analyzed in~\cite{Fox2013Population-Game}, of which the so-called smoothed and anticipatory payoff modifications modeling inertia and anticipatory effects, respectively, are examples. According to our formulation, each PDM is associated with a so-called {\it stationary} population game that determines the deterministic payoff in the stationary regime characterized by when the mean population state converges as time tends to infinity. 
	\item Given a PDM and a protocol satisfying Nash stationarity, in \S\ref{sec:NashStationaryEDMGlobalConvergence} we provide sufficient conditions determining when the set of Nash equilibria of the stationary population game is either globally asymptotically stable or globally attractive. We also specialize our results for the classes of integrable excess payoff target and impartial pairwise comparison protocols, of which the Brown-von Neumann-Nash and Smith protocols are, respectively, well-known examples.
	\item In \S\ref{sec:PBR}, we obtain sufficient conditions similar to those in (ii) for perturbed best response protocols, for which Nash stationarity does not hold. In this case, the conditions determine when a perturbed equilibrium set, which can be viewed as an approximation of Nash's, is either globally asymptotic stable or globally attractive.
	\item In \S\ref{sec:AnticipatoryPDM}, we determine the parameters of a PDM class, whereof the smoothed and anticipatory dynamically modified payoff considered in~\cite{Fox2013Population-Game} are particular cases, under which the sufficient conditions outlined in (i) and (ii) are satisfied. This includes cases in which the stationary population game is either affine, but unlike what is assumed in~\cite{Fox2013Population-Game} may not be strictly contractive, or has a strictly concave potential.
	
\end{enumerate}

\subsubsection*{Outline of our technical approach}

 We establish the results in (i) - (iv) by recognizing that in our paradigm the mean population state and the deterministic payoff are governed by a mean closed loop model, which is precisely defined in \S\ref{sec:closedloop} as the feedback interconnection between the PDM and the so-called \underline{\it evolutionary dynamics model}~(EDM) that captures the strategy revision dynamics as specified by the protocol.

As we discussed in \S\ref{sec:MeanDynamicsPopulationGames} for the case in which the payoff is determined by a population game, Lyapunov functions~\cite{Khalil1995Nonlinear-syste} are often used to establish convergence towards certain equilibria of the mean dynamic. The typical argument follows techniques derived from the classical principles in~\cite{Lasalle1960Some-extensions}. However, these techniques are not immediately applicable to our formulation because the deterministic payoff is no longer a memoryless function of the mean population state, which can no longer be characterized as a solution of the mean dynamic. Instead we resort to a well-known compositional approach~\cite{Arcak2016Networks-of-Dis} rooted on passivity principles with which convergence properties for the mean closed loop model can be ascertained by separately establishing certain passivity properties of the EDM and the PDM. More specifically, the search for a Lyapunov function, which was central for establishing convergence to equilibria of the mean dynamic, is superseded in our context by the construction of certain types of storage functions for the EDM and the PDM. 

Although the aforementioned passivity-based methodology, including the construction of storage functions for the individual blocks of a feedback loop to ascertain its stability~\cite{Willems1972Dissipative-dyn}, has been widely used~\cite{Sepulchre1997Constructive-no}, we propose our own form of passivity inspired on the approach in~\cite{Fox2013Population-Game}, which is both equilibrium independent and is compatible with the EDM and PDM classes we are interested in. In \S\ref{sec:PassivityConcepts} we introduce the passivity concepts used throughout the article. It includes in \S\ref{sec:PassivityComparison} a comparison with the related concepts of equilibrium independent passivity~\cite{Arcak2016Networks-of-Dis}, differential passivity~\cite{Forni2014A-differential-,Forni2013On-differential} and incremental passivity~\cite{Pavlov2008Incremental-pas}.

Most of the key results presented here date back to~\cite{Park2015Distributed-est}, and preliminary versions of the results in this article were reported in~\cite{Park2018Passivity-and-E2,Park2018Passivity-and-e}.

\subsection{Structure of the article}


Besides the introduction, the article has nine sections briefly described as follows:
\begin{itemize}
	\item In \S\ref{sec:PrelimsAndFormulation} we start by describing key preliminary concepts, including the detailed definitions of PDM, EDM and the mean closed loop model. We conclude the section with an overarching problem formulation that motivates the results derived throughout the paper.
	
	\item In \S\ref{sec:EngineeringExamplesEvDyn} we provide 3 specific examples on decision-making in large populations to which our results are applicable. In addition, we discuss further engineering examples illustrating	
	applications of particular instances of the framework considered here -- such as when the PDM is a population game.
	
	\item In \S\ref{sec:ComparionPopulationMeanF} we provide a comparison among evolutionary games, population games -- which we generalize here -- and mean field games. 
	\item Using an explicit finite population framework, in \S\ref{sec:LargePopulationLimit} we ascertain that the mean population state and deterministic payoff resulting from solutions of the mean closed loop model indeed approximate the population state and the payoff with arbitrary fidelity over any finite time horizon uniformly, as the number of agents tends to infinity.
	\item In \S\ref{sec:PassivityConcepts} we introduce the main concepts used throughout the article to characterize relevant passivity properties for any given PDM and EDM. More specifically, we define $\delta$-passivity allowing a possible surplus and $\delta$-antipassivity admitting a possible deficit for an EDM and a PDM, respectively. A weak version of $\delta$-antipassivity is also proposed and invoked in Lemma~\ref{lem:MainLemma} to ascertain sufficient conditions that will be used in \S\ref{sec:NashStationaryEDMGlobalConvergence} and~\S\ref{sec:PBR} to establish global attractivity results. 
	\item  A detailed analysis in \S\ref{sec:NashStationaryEDMGlobalConvergence} for the case in which the EDM satisfies Nash stationarity leads to methods to determine whether the set of Nash equilibria of the stationary game associated with a given PDM is globally attractive or globally asymptotically stable. We also specialize our results for the integrable excess payoff target (EPT) and impartial pairwise comparison (IPC) protocols.
	\item In \S\ref{sec:PBR} we present results that are analogous to \S\ref{sec:NashStationaryEDMGlobalConvergence} for perturbed best response (PBR) protocols. In addition, we establish a fundamental trade-off according to which a perturbed equilibrium set is globally attractive even when the PDM has a $\delta$-antipassivity deficit provided that it is no larger than the $\delta$-passivity surplus of the EDM. This trade-off is not possible in the context of \S\ref{sec:NashStationaryEDMGlobalConvergence} because, as is shown in~\cite[Corollary~IV.3]{Park2018Passivity-and-e}, any EDM stemming from an  EPT or IPC protocols never have a positive $\delta$-passivity surplus. 
	
	\item In \S\ref{sec:AnticipatoryPDM} we propose the so-called smoothing-anticipatory PDM class and characterize its $\delta$-antipassivity properties. 
	
	\item In \S\ref{sec:exmaples} we provide numerical examples and carry out simulations to demonstrate application of our framework and key stability results, discussed in \S\ref{sec:NashStationaryEDMGlobalConvergence} or~\S\ref{sec:PBR}, to large population decision-making problems. In particular, using the examples, we qualitatively examine the effects of payoff anticipation and smoothing on convergence properties of the mean closed loop model, and investigate the role of the $\delta$-passivity surplus in establishing stability of Nash equilibria.
	
\end{itemize}

\section{Preliminary Concepts and Problem Formulation}
\label{sec:PrelimsAndFormulation}


Before we specify the class of problems considered throughout this article, we proceed to defining and discussing basic properties of the sets and maps needed for our analysis. 

In \S\ref{sec:DPG}, we will propose a class of payoff dynamics models (PDM) comprising a state that evolves in (continuous) nonnegative real-valued time $\mathbb{T} = \mathbb{R}_+$. In our single-population framework, the strategy profile set is specified as follows:
$$\mathbb{X} \overset{\mathrm{def}}{=} \left \{z \in \mathbb{R}^n_+ \ \left| \ \sum_{j=1}^n z_j = m \right. \right \} $$
where $n$ is the number of strategies and $m$ is the population mass. Every element of $\mathbb{X}$ is interpreted as a n-tuple representing the portions of the population adopting each strategy. 

 For any vector $r$ in $\mathbb{R}^n$, we specify the following norm:  $$ \| r \| \overset{\mathrm{def}}{=} \max_{1 \leq i \leq n} |r_i|$$  For any trajectory $v: \mathbb{T} \rightarrow \mathbb{R}^n$, we adopt the following norm $$\|v\| \overset{\mathrm{def}}{=} \sup_{t \in \mathbb{T}} \|v(t) \|$$

 We restrict our analysis to trajectories in the following set: 
 $$\mathfrak{R}^n \overset{\mathrm{def}}{=} \{ \text{All differentiable maps from $\mathbb{T}$ to $\mathbb{R}^n$}\}$$

\begin{definition} We reserve ${x: \mathbb{T} \rightarrow \mathbb{X}}$ to denote the so-called mean population state trajectory. At a particular time $t$ the mean population state is represented by $x(t)$. The set of admissible mean population state trajectories $\mathfrak{X}$ is defined as follows:  
$$\mathfrak{X} \overset{\mathrm{def}}{=} \big \{ x \in \mathfrak{R}^n \ \big | \ \| \dot{x} \| < \infty \ \text{and} \ x(\tau) \in \mathbb{X}, \forall\tau \in \mathbb{T} \big \}$$
\end{definition}

\begin{definition} We use ${p: \mathbb{T} \rightarrow \mathbb{R}^n}$ to denote the so-called deterministic payoff trajectory. The deterministic payoff at time~$t$ is represented by $p(t)$, whose entries ascribe a reward to each strategy. The set of possible deterministic payoff trajectories $\mathfrak{P}$ is specified as:
$$\mathfrak{P} \overset{\mathrm{def}}{=} \big \{ p \in \mathfrak{R}^n \ \big | \ \| p \| < \infty \ \text{and} \ \| \dot{p} \| < \infty \big \}$$
\end{definition}

In \S\ref{sec:LargePopulationLimit}, we describe a scenario in which the mean population state and deterministic payoff are high fidelity approximations of the population state and payoff of a finite population, respectively, provided that the number of agents is sufficiently large.

We now proceed with defining the main models and concepts needed to complete the description of our framework.

\subsection{Payoff Dynamics Model (PDM)}
\label{sec:DPG}

We start by defining population games as used in the framework described in \S\ref{subsec:IntroPopGamesProtStab}.

\begin{definition} {\bf (Population game)} A population game is specified by a continuous map $\mathcal{F}:\mathbb{X} \rightarrow \mathbb{R}^n$. It determines the deterministic payoff trajectory as $p(t)=\mathcal{F}\big(x(t)\big),~t \geq 0$. 
\end{definition}

An important class of population games is defined below:

\begin{definition}
\label{def:StableGame} {\bf (Contractive Population Game)} A given population game $\mathcal{F}: \mathbb{X} \rightarrow \mathbb{R}^n$ is qualified as contractive if it satisfies the following condition:
$$ (z-\bar{z})^T \big( \mathcal{F}(z)-\mathcal{F}(\bar{z}) \big) \leq 0, \quad z,\bar{z} \in \mathbb{X} $$ The population game is said to be strictly contractive if the inequality above is strict for $z \neq \bar{z}$.
\end{definition}

In contrast with the population game formulation in which $p$ is obtained via a memoryless\footnote{Here we adopt a convention that is common in systems theory, according to which a system is called \underline{memoryless} when the output is an instantaneous (point-wise in time) function of the input.} map of $x$, we consider that a payoff dynamics model (PDM), as defined below, governs the deterministic payoff trajectory in terms of the mean population state trajectory. 
\begin{definition} 
\label{def:PDM}
{\bf (PDM)}
We consider that a payoff dynamics model (PDM) is defined as follows:\footnote{For concise presentation, throughout the paper, we suppose that the dimension of the vector $q(t)$ is set to be $n$, which is same as the number of strategies. In general, the domain of $q(t)$ can be any finite-dimensional space of real vectors.}
\begin{equation}
\label{eq:PDM}
\quad \begin{matrix}
\dot{q}(t) = & \mathcal{G} \big (q(t),u(t) \big ) \\
p(t)  = &\mathcal{H} \big (q(t),u(t) \big )
\end{matrix}, \quad t \geq 0, \ q(0) \in \mathbb{R}^n, \ u \in \mathfrak{X}
\end{equation}
where $\mathcal{G}: \mathbb{R}^n \times \mathbb{X} \rightarrow \mathbb{R}^n $ is Lipschitz continuous and ${\mathcal{H}: \mathbb{R}^n \times \mathbb{X} \rightarrow \mathbb{R}^n}$ is continuously differentiable and Lipschitz continuous. The state-space representation (\ref{eq:PDM}) specifies a map from the input $u$ and the initial condition $q(0)$ to the state $q$ and output $p$. Henceforth, we identify a PDM by the pair $(\mathcal{G},\mathcal{H})$ that specifies it. 
\end{definition}

The PDM is a finite-dimensional dynamical system that, as we will see in \S\ref{sec:closedloop}, will be used in a closed loop configuration in which $u$ is set to the mean population state trajectory $x$.

In addition, we require that the PDM satisfies the following assumption for reasons that will become clear in Proposition~\ref{prop:uniquesoln}.

\begin{assumption} \label{assump:BoundedPDM} {\bf (PDM Boundedness)} In this article we consider that every PDM is a bounded map in the following sense:
\begin{equation}
\sup_{u \in \mathfrak{X}} \|q\| < \infty, \quad q(0) \in \mathbb{R}^n
\end{equation} where $q$ is the solution of (\ref{eq:PDM}) for input $u$ and initial condition~$q(0)$.  
\end{assumption}

\begin{remark} 
\label{rem:bounded}
Boundedness of a PDM guarantees that for each initial condition $q(0)$ in $\mathbb{R}^n$ there is a finite upper-bound for $\|q\|$ that holds for all inputs $u$ in $\mathfrak{X}$. In addition, because $\mathcal{G}$ is Lipschitz continuous, $\mathcal{H}$ is continuously differentiable, and the elements of $\mathfrak{X}$ are bounded, we infer that if the PDM is bounded then $p$, $\dot{q}$, and $\dot{p}$ are also bounded, for each initial condition $q(0)$.
\end{remark}

We also assume that every PDM is associated with a so-called \underline{stationary population game} according to the following assumption.

\begin{assumption} {\bf (Stationary Population Game of a PDM)} \label{assumption:stationary_game}
  \label{assump:StationaryGame} In this article, we assume that for every PDM there is a continuous map $\bar{\mathcal{F}}:\mathbb{X} \rightarrow \mathbb{R}^n$ for which the following implication holds for any initial condition $q(0)$ in $\mathbb{R}^n$:
  \begin{equation} \lim_{t \rightarrow \infty} \| \dot{u}(t) \| = 0 \implies \lim_{t \rightarrow \infty} \left \| p(t) -\bar{\mathcal{F}} \big ( u(t) \big) \right \| = 0, \quad u \in \mathfrak{X}
  \end{equation}
  in addition, we require that the following set $$\left\{ (s,z) \in \mathbb R^n \times \mathbb X \,\big| \,\mathcal H(s,z) = \bar{\mathcal F}(z) \right\}$$ is either a compact subset of or the entire set $\mathbb R^n \times \mathbb X$. We refer to $\bar{\mathcal{F}}$ as the \underline{stationary population game} of the PDM.
\end{assumption}

The following definition clarifies the way in which population games can be regarded as a particular PDM class.

\begin{definition}
\label{def:MemorylessPDM}
{ \bf (Memoryless PDM)} Let $\mathcal{F}:\mathbb{X} \rightarrow \mathbb{R}^n$ be a continuously differentiable\footnote{Notice that since $\mathbb{X}$ is compact, continuous differentiability of $\mathcal{F}$ implies that it is also Lipschitz continuous.} map that defines a population game. We refer to $\mathcal{F}$ interchangeably as a population game or a \underline{memoryless PDM}.
\end{definition}

It suffices to choose $\mathcal{H}(s,z) = \mathcal{F}(z)$ and, although it is not necessary, adopt $\mathcal{G}(s,z) = \mathcal F(z) - s$ to conclude that the memoryless PDM complies with Definition~\ref{def:PDM}. It also follows immediately from this construct that the stationary population game $\bar{\mathcal{F}}$ is~$\mathcal{F}$.

\begin{definition}
\label{def:NashForPDM} Given a PDM, the set of Nash equilibria of its stationary population game is defined as follows:
$$ \mathbb{NE}(\bar{\mathcal{F}}) \overset{\mathrm{def}}{=} \big\{ z \in \mathbb{X} \ \big| \ z^T \bar{\mathcal{F}}(z) \geq \bar{z}^T \bar{\mathcal{F}}(z), \ \bar{z} \in \mathbb{X} \big\} $$
\end{definition}

If a PDM is memoryless then $\mathbb{NE}(\bar{\mathcal{F}})$ is the Nash equilibrium set of the underlying population game $\mathcal{F}$. 

%

 The results in this article are valid for any PDM satisfying the assumptions above, and in \S\ref{sec:MemorylessPDM} and~\S\ref{sec:AnticipatoryPDM}, we define and analyze in more detail two cases of interest.

\subsection{Evolutionary Dynamics Model (EDM)}
\label{Sec:ED}

Our model for how the state of the population evolves over time is specified as follows.

\begin{definition} \label{def:edm} {\bf (EDM)} The evolutionary dynamics of the population is specified by an evolutionary dynamics model (EDM) of the following form:
\begin{equation} 
\label{PopulationDynamic}
\dot{x}(t) = \mathcal{V} \big (x(t),w(t) \big ), \quad x(0) \in \mathbb{X}, \ t \geq 0, \ w \in \mathfrak{P}
\end{equation}
where $\mathcal{V}: \mathbb{X} \times \mathbb{R}^n \rightarrow \mathbb{R}^n $ is a given Lipschitz 
continuous map that also satisfies the following constraint:
\begin{equation} 
\label{ConeConstraint}
\mathcal{V}(z,r) \in \mathbb{TX}(z), \quad z \in \mathbb{X}, \  r \in \mathbb{R}^n
\end{equation}
where $\mathbb{TX}(z)$ is the cone of viable displacement from $z$ specified as: 
$$\mathbb{TX}(z) \overset{\mathrm{def}}{=} \big \{\alpha (\bar{z}-z) \ | \ \bar{z} \in \mathbb{X}  \ \text{and} \ \alpha \geq 0 \big \}, \quad z \in \mathbb{X}$$ Here, $x(t)$ is the state, which is also the output of the model, $x(0)$ is the initial condition, and $w$ is the input that is typically taken to be a given deterministic payoff trajectory.
\end{definition} 

Inspired by the notation in~\cite{Sandholm2010Population-Game}, we also represent the subspace tangent to $\mathbb{X}$ as follows:
$$ \mathbb{TX} \overset{\mathrm{def}}{=} \left \{ \tilde z \in \mathbb{R}^n \ \Bigg| \ \sum_{i=1}^n \tilde z_i =0  \right \} $$ 

\begin{remark}{\bf (``EDM" versus ``deterministic evolutionary dynamics")} We have decided to use ``evolutionary dynamics model", or EDM, to refer to~(\ref{PopulationDynamic}) to distinguish it from the closely related ``deterministic evolutionary dynamics" concept discussed in~\cite[\S~4.4]{Sandholm2010Population-Game}, which refers to a given map from the set of population games to a set of trajectories consistent with the mean dynamic specified in~\cite[(M), p.124]{Sandholm2010Population-Game}. The fact that we can assign the deterministic payoff as the input of the EDM, irrespective of whether it is generated by a PDM or a population game, will facilitate a rigorous analysis for our framework later on. 
\end{remark}

In this article, we will consider a subclass of EDM that stems from the following class of revision protocols.

\begin{definition}{\bf (Protocol)} \label{def:protocol}
  A revision protocol, or {\it protocol} for short, is specified by a Lipschitz continuous map ${\mathcal{T}:\mathbb{R}^n \times \mathbb{X} \rightarrow \mathbb{R}^{n \times n}_{+}}$. It models how members of the population revise their strategies in response to the population state and payoff. In particular, it determines the EDM as follows:
\begin{multline}
\label{eq:MeanDynamic} \mathcal{V}_i(z,r) \overset{\mathrm{def}}{=} \sum_{j=1}^n z_j \mathcal{T}_{ji}(r,z) -  \Bigg ( \sum_{j=1}^n \mathcal{T}_{ij} (r,z) \Bigg ) z_i , \\  z \in \mathbb{X}, \ r \in \mathbb{R}^n, \  1 \leq i \leq n
\end{multline}
\end{definition}

In \S\ref{sec:closedloop}, we model the feedback mechanism that is established when the PDM is actually played by a population characterized by a given protocol and corresponding EDM. We also show in \S\ref{sec:LargePopulationLimit} how the mean population state approximates the state of a finite population with arbitrary accuracy as the number of agents tends to infinity.

Most of our basic notation is summarized in Tables~\ref{tab:BasicNotationPartI} and~\ref{tab:BasicNotationPartII}, while most acronyms and notation associated with EDM and PDM, some of which will be introduced later on, are summarized in Table~\ref{tab:EDMPDMAcronymsNotation}. 

\begin{table}

\begin{tabular}{l | l}
n & number of strategies. \\
m & population mass. \\
$\mathbb{R}^n$ & $n$-th dimensional real coordinate space. \\
$\mathbb{R}^n_{+}$ & $n$-th dimensional nonnegative real coordinate space. \\
$\mathbb{R}^n_{-}$ & $n$-th dimensional nonpositive real coordinate space. \\
$\mathbb{R}^n_{*}$ & represents viable excess payoff vectors $\big( \mathbb{R}^n-\mathrm{int}(\mathbb{R}^n_{-}) \big)$. \\
$\mathbb{X}$ & strategy profile set.  \\
$\mathrm{int}(\mathbb{X})$ & is defined as $\{ z \in \mathbb{X} \ | \ z_i >0, \ 1\leq i \leq n\}$. \\
$\mathrm{bd}(\mathbb{X})$ & is given by $\mathbb{X} - \mathrm{int}(\mathbb{X})$. \\
\end{tabular}
\vspace{.1in}
\caption{Partial list of basic notation: Part~I.}
\label{tab:BasicNotationPartI}
\end{table}

\begin{table}

\begin{tabular}{l | l}
$\mathcal{F}$ & payoff map specifying a population game. \\
$z$ &  represents a given strategy profile. \\
$r$ & represents a given payoff vector.\\
$\mathcal{DF}$ & Jacobian of $\mathcal{F}$. \\
$\mathbb{TX}$ & subspace tangent to $\mathbb{X}$. \\
$\mathbb{TX}(z)$ & cone tangent to $\mathbb{X}$ at $z$. \\
$\mathbb{T}$ & nonnegative continuous time algebraically equivalent to $\mathbb{R}_+$. \\
$\mathfrak{X}$ & set of possible mean population state trajectories. \\
$x$ & mean population state trajectory.\\
$\mathfrak{P}$ & set of possible deterministic payoff trajectories. \\
$p$ & deterministic payoff trajectory.\\
\end{tabular}
\vspace{.1in}
\caption{Partial list of basic notation: Part~II.}
\label{tab:BasicNotationPartII}
\end{table}

\begin{table} 

\begin{tabular}{l | l}
PDM & payoff dynamics model. \\
$q$ & state trajectory of the PDM. \\
$\mathcal{G}$ & map specifying the differential equation for $q$. \\
$\mathcal{H}$ & map specifying $p$ based on $q$ and input to the PDM. \\ 
$\bar{\mathcal{F}}$ & stationary population game of a PDM. \\
$\mathbb{NE}(\bar{\mathcal{F}})$ & set of Nash equilibria of $\bar{\mathcal{F}}$. \\ \hline \hline
EDM & evolutionary dynamics model. \\
$\mathcal{V}$ & map specifying the differential equation modeling the EDM. \\
$\mathcal{T}$ & protocol determining $\mathcal{V}$. \\ \hline
EPT & EDM associated with an excess payoff target protocol. \\
BNN & (Brown-von Neumann-Nash) particular case of EPT EDM. \\ \hline
IPC & EDM associated with impartial pairwise comparison protocol.  \\
Smith & particular case of IPC EDM. \\ \hline
PBR & EDM associated with perturbed best response protocol. \\
$\mathcal{Q}$ & perturbation map associated with the PBR EDM. \\
$\tilde{\mathcal{F}}$ & $\bar{\mathcal{F}}$ perturbed by $\mathcal{Q}$ as $\tilde{\mathcal{F}}(z)=\bar{\mathcal{F}}(z) - \nabla \mathcal{Q}(z)$. \\
$\mathbb{PE}(\bar{\mathcal{F}},\mathcal{Q})$ & set of perturbed equilibria $\mathbb{NE}(\tilde{\mathcal{F}})$. \\
Logit & particular case of PBR EDM. \\
\end{tabular}
\vspace{.1in}
\caption{Partial list of most acronyms and notation associated with PDM and EDM.}
\label{tab:EDMPDMAcronymsNotation}
\end{table}

\subsection{Solutions of the Mean Closed Loop Model}
\label{sec:closedloop}

We wish to study the trajectories of the deterministic payoff and mean population state
when the PDM is interconnected in feedback with an EDM. That is to
say that we consider that the input of the PDM is the mean population state and, in turn, the resulting deterministic payoff is the input of the EDM (see Fig.~\ref{Fig:ClosedLoop}). A state space closed loop model of this feedback interconnection is constructed as follows.

\tikzstyle{system} = [draw, ultra thick, minimum width=6em, text centered, rounded corners, fill=yellow!10,  minimum height=3em]

\def\edgedist{1.1}
\begin{figure} [t]
\centering
\begin{tikzpicture}[auto, node distance=7em,>=latex]

\node [system, minimum height=10ex] (games) {$\begin{array} {c} \textit{Payoff Dynamics Model} \\ \begin{array} {c} \dot q(t) = \mathcal{G} \left( q(t), u(t) \right) \\ p(t) = \mathcal{H} \left( q(t), u(t) \right) \end{array} \textit{(PDM)} \end{array}$};
\draw [->,color=green!15!black] (games)+(0,8ex) node[above,color=green!15!black]{$q(0)$} -- (games);

\node [system, minimum height=8ex, below of=games] (evo_dynamics) {$\begin{array} {c} \textit{Evolutionary Dynamics Model} \\ \begin{array} {c} \dot{x}(t) = \mathcal V(x(t), w(t)) \end{array} \textit{(EDM)} \end{array}$};
\draw [->,color=green!15!black] (evo_dynamics)+(0,-7ex) node[below,color=green!15!black]{$x(0)$} -- (evo_dynamics);

\draw[ultra thick,color=green!20!black, ->] (games.east) -- ++(1*\edgedist,0) node [pos=.5] {$p$} |- (evo_dynamics.east) node [above, pos=.25, rotate=-90] {\textit{deterministic payoff}} node [near end] {\footnotesize $(w=p)$};
\draw[ultra thick,color=green!20!black, ->] (evo_dynamics.west) -- ++(-1*\edgedist,0) node [pos=.5] {$x$} |- (games.west) node [above, pos=.25, rotate=90] {\textit{mean population state}} node [near end] {\footnotesize $(u=x)$};

\end{tikzpicture}
\caption{Diagram representing a feedback interconnection between a PDM and an EDM. The resulting system is referred to as the \underline{mean closed loop model}.}
\label{Fig:ClosedLoop}
\end{figure}
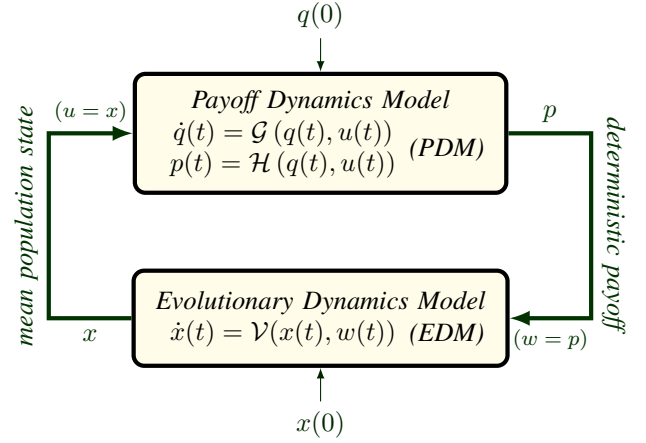


\begin{definition} {\bf (Mean Closed Loop Model)} Given a PDM and an EDM, the associated mean closed loop model
is obtained by using $x$ as an input to (\ref{eq:PDM}) and $p$ as an input to (\ref{PopulationDynamic}), or equivalently, setting $u=x$ and $w=p$. The following is the state space representation of the mean closed loop model:
\begin{equation}
\label{eq:ClosedLoop}
\quad \begin{matrix}
\dot{q}(t) = & \mathcal{G} \big (q(t),x(t) \big ) \\
\dot{x}(t) = &\mathcal{V}^{\mathcal{H}} \big (q(t),x(t) \big ) \\
p(t)  = &\mathcal{H} \big ( q(t),x(t) \big ) 
\end{matrix}, \quad t \geq 0, \ \big (q(0),x(0) \big ) \in \mathbb{R}^n \times \mathbb{X}, 
\end{equation} where $\mathcal{V}^{\mathcal{H}}: \mathbb{R}^n \times \mathbb{X} \rightarrow \mathbb{R}^n$ is defined as:
$$ \mathcal{V}^{\mathcal{H}}(s,z) \overset{\mathrm{def}}{=} \mathcal{V} \left(z,\mathcal{H}(s,z) \right), \quad \ s \in \mathbb{R}^n, z \in \mathbb{X}$$
Notice that the state of the mean closed loop model is $\big( q(t),x(t) \big)$. The mean closed loop model has no input, and the state trajectory $(q,x )$ is the solution of the associated initial value problem in response to the pre-selected initial condition $\big( q(0),x(0) \big)$.
\end{definition} 

In \S\ref{sec:LargePopulationLimit}, we show that solutions of~(\ref{eq:ClosedLoop}) approximate uniformly, over any finite time horizon, with arbitrary accuracy the population state of a finite population subjected to the same protocol and PDM with probability tending to one as the number of agents tends to infinity. 

It is clear from our PDM and EDM definitions that $\mathcal{G}$ and $\mathcal{V}^{\mathcal{H}}$ are Lipschitz continuous, which, from Picard-Lindel{\"o}f Theorem, guarantees the existence and uniqueness of solutions of the initial value problem in (\ref{eq:ClosedLoop}). Also the boundedness of the PDM from Assumption~\ref{assump:BoundedPDM} and Remark~\ref{rem:bounded} guarantees that $p$ and $x$ remain in $\mathfrak{P}$ and $\mathfrak{X}$, respectively. We assert these facts in the following proposition, which is an immediate consequence of the Picard-Lindel{\"o}f Theorem and Remark~\ref{rem:bounded}.

\begin{proposition}
\label{prop:uniquesoln} Consider that a mean closed loop model (\ref{eq:ClosedLoop}) is formed by an EDM and a PDM. For each initial condition $\big (q(0),x(0) \big )$ in $\mathbb{R}^n \times \mathbb{X}$ there is a unique solution $(q,x)$. The solution is such that $p$ and $x$ are in $\mathfrak{P}$ and $\mathfrak{X}$, respectively.
\end{proposition}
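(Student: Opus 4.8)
The plan is to split the claim into three parts: (a)~local existence and uniqueness, (b)~extension of the maximal solution to all of $\mathbb{T}=\mathbb{R}_+$, and (c)~the a~priori bounds that place $x$ in $\mathfrak{X}$ and $p$ in $\mathfrak{P}$. Part~(a) is immediate: as already noted, the right-hand side $\big(\mathcal{G},\mathcal{V}^{\mathcal{H}}\big)$ of~(\ref{eq:ClosedLoop}) is Lipschitz continuous on $\mathbb{R}^n\times\mathbb{X}$, so the Picard--Lindel\"of theorem gives a unique maximal solution $(q,x)$ on some interval $[0,t_{\max})$.

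For part~(b) I would rule out finite escape time. First, $x(\tau)\in\mathbb{X}$ for all $\tau\in[0,t_{\max})$: the constraint~(\ref{ConeConstraint}) forces $\dot{x}(\tau)=\mathcal{V}\big(x(\tau),p(\tau)\big)$ into $\mathbb{TX}(x(\tau))$, which is the tangent cone of the simplex $\mathbb{X}$ at $x(\tau)$, and by the standard viability (Nagumo) argument the trajectory stays in the closed convex set~$\mathbb{X}$; concretely $\sum_i\mathcal{V}_i=0$ preserves the mass constraint $\sum_i x_i=m$, and $\mathcal{V}_i(z,r)\ge 0$ whenever $z_i=0$ keeps every entry nonnegative. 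Since $\mathbb{X}$ is compact, $x$ is bounded. Second, $q$ cannot blow up in finite time: Lipschitzness of $\mathcal{G}$ together with $x$ ranging over the compact set $\mathbb{X}$ yields $\|\mathcal{G}(q,x)\|\le c+L\|q\|$ for constants $c,L\ge 0$, and Gr\"onwall's inequality then bounds $\|q(t)\|$ on every bounded subinterval. Hence $t_{\max}=\infty$ and $(q,x)$ is a global solution with $x(\tau)\in\mathbb{X}$ for every $\tau$.

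Part~(c) is the delicate step and is where Assumption~\ref{assump:BoundedPDM} and Remark~\ref{rem:bounded} enter. The subtlety is an apparent circularity: Assumption~\ref{assump:BoundedPDM} bounds $q$ only for inputs lying in $\mathfrak{X}$, yet membership $x\in\mathfrak{X}$ requires $\|\dot{x}\|<\infty$, which through $\dot{x}=\mathcal{V}(x,p)$ and $p=\mathcal{H}(q,x)$ seems to presuppose $q$ bounded. I would break this by truncation: for each $T>0$ let $\tilde{x}_T$ equal $x$ on $[0,T]$ and be smoothly frozen to a constant afterwards; since $x$ is differentiable and $\dot{x}$ is continuous, hence bounded, on the compact interval $[0,T+1]$, we have $\tilde{x}_T\in\mathfrak{X}$. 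By uniqueness of solutions of~(\ref{eq:PDM}), the PDM response to input $\tilde{x}_T$ from $q(0)$ coincides with $q$ on $[0,T]$, so Assumption~\ref{assump:BoundedPDM} gives $\sup_{t\in[0,T]}\|q(t)\|\le B$ with $B$ depending only on $q(0)$; letting $T\to\infty$ yields $\|q\|\le B<\infty$. The rest follows as in Remark~\ref{rem:bounded}: $p=\mathcal{H}(q,x)$ is bounded since $\mathcal{H}$ is continuous and $(q,x)$ stays in a bounded set; $\dot{q}=\mathcal{G}(q,x)$ is bounded since $\mathcal{G}$ is Lipschitz; then $\dot{x}=\mathcal{V}(x,p)$ is bounded since $\mathcal{V}$ is continuous and $(x,p)$ is now confined to a bounded set, so $\|\dot{x}\|<\infty$ and $x\in\mathfrak{X}$; and $\dot{p}$, built from $\dot{q}$ and $\dot{x}$ via the continuous Jacobian of $\mathcal{H}$, is bounded as well, so $\|p\|<\infty$, $\|\dot{p}\|<\infty$, and $p\in\mathfrak{P}$.

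The principal obstacle is the circularity in part~(c), handled by the truncation/bootstrap argument; the remainder is a routine application of Picard--Lindel\"of, viability, and Gr\"onwall. If forward invariance of $\mathbb{X}$ is regarded as already built into Definition~\ref{def:edm}, the whole substance of the argument reduces to that bootstrap.
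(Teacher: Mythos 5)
Your proof is correct and follows the same route as the paper, whose own argument simply asserts the result as an immediate consequence of the Picard--Lindel\"of theorem together with Assumption~\ref{assump:BoundedPDM} and Remark~\ref{rem:bounded}. The additional steps you supply --- forward invariance of $\mathbb{X}$ via the cone constraint, the Gr\"onwall bound excluding finite escape of $q$, and especially the truncation/causality bootstrap that justifies applying Assumption~\ref{assump:BoundedPDM} to the closed-loop input $x$ before knowing $\|\dot{x}\|<\infty$ --- are precisely the details the paper leaves implicit, and they hold up.
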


Furthermore, under the conditions of Proposition~\ref{prop:uniquesoln}, an immediate adaptation of the argument in \cite[Theorem~4.A.3]{Sandholm2010Population-Game} can be used to show that the map ${\phi_t: \big (q(0),x(0) \big ) \mapsto (q(t),x(t) \big)}$ is Lipschitz continuous for any $t$ in $\mathbb{T}$.

\subsection{Background on the Approach of~\cite{Fox2013Population-Game}}

As we discussed earlier, standard population games are specified by a continuous map $\mathcal{F}:\mathbb{X} \rightarrow \mathbb{R}^n$ that determines the deterministic payoff at time $t$ according to $p(t) = \mathcal{F} \big ( x(t) \big)$. The map is memoryless because the (deterministic) payoff at time $t$ depends solely on the (mean) population state also at time $t$. 


As shown in~\cite{Hofbauer2009Stable-games-an}, when a contractive population game $\mathcal{F}$ is played according to an EDM derived from a target protocol that is integrable, or equivalently has a revision potential, the trajectory of the mean population state is guaranteed to have global convergence properties, which are established using Lyapunov-like functions that are constructed from the revision potentials.
Work in \cite{Fox2013Population-Game} proposed a framework to extend the contractive population game methodology to certain dynamically modified payoffs, which in our formulation could be modeled as PDMs. The underlying idea in \cite{Fox2013Population-Game} is to use techniques related to $\delta$-passivity for establishing sufficient conditions that determine when the time derivative of the state trajectory of~(\ref{eq:ClosedLoop}) is $\mathcal{L}_2$ stable.

While, in~\cite{Hofbauer2009Stable-games-an}, contractivity was a requirement for convergence in the case of population games, the approach by~\cite{Fox2013Population-Game} for dynamically modified payoffs requires that the PDM satisfies an inequality defining a condition called {\it $\delta$-antipassivity}. Interestingly, \cite{Fox2013Population-Game} shows that $\delta$-antipassivity can be rightly viewed as a generalization of contractivity because any contractive population game is also $\delta$-antipassive. Notably, the analysis in~\cite{Fox2013Population-Game} proves that when the PDM is $\delta$-antipassive and EDM results from an integrable target protocol then the revision potential can be used to construct a storage function to establish $\mathcal{L}_2$ stability of the solutions of~(\ref{eq:ClosedLoop}). In general, when such a storage function can be constructed, the EDM is shown to be $\delta$-passive, or equivalently, it satisfies an inequality that is the antisymmetric of the one used to define $\delta$-antipassivity. We will revisit these concepts in \S\ref{sec:PassivityConcepts}.

\subsection{Problem Statement and Comparison with~\cite{Hofbauer2009Stable-games-an} and~\cite{Fox2013Population-Game}} 

The analysis in \cite{Fox2013Population-Game} investigates stability of the derivative of the mean population state in the $\mathcal{L}_2$ sense. Remarkably, stability guarantees of the aforementioned sort do not imply convergence of the mean population state towards any specific equilibria as was done, for instance, in~\cite{Hofbauer2009Stable-games-an}. In particular, we cannot use existing versions of Barbalat's lemma~\cite{Farkas2016Variations-on-B} because they presume constraints on the second derivative of the mean population state that may not be verified in our framework in which protocols may not be differentiable. In addition, even if some version of Barbalat's lemma could be constructed to determine when the derivative of the mean population state tends to zero\footnote{In fact, even for elementary real-valued functions, convergence of the derivative to zero as time tends to infinity does not, in general, guarantee that the function converges to a constant. The function defined for nonnegative $t$ by $\log(t+1)$ is a simple example illustrating this fact. }, it would not suffice to ascertain whether Nash equilibrium set, or perturbed versions of it, are globally attractive or globally asymptotically stable.

\begin{problem}
\label{prob:MainProblem}
{\bf (Main Problem)} Consider that a PDM is given and that $\bar{\mathcal{F}}$ is its stationary population game. For the EDM classes associated with the protocol classes considered in~\cite{Hofbauer2009Stable-games-an}, we seek to determine the conditions on the PDM to guarantee that a set of equilibria $\mathbb{E}(\bar{\mathcal{F}})$, which in this article is either $\mathbb{NE}(\bar{\mathcal{F}})$ or a perturbed version of it, is globally attractive or globally asymptotically stable according to the following definitions.
\end{problem}

\begin{definition} \label{def:stability} {\bf (Stability Concepts)} Let an EDM and a PDM be given. The stability of a set $\mathbb{E}(\bar{\mathcal{F}})$, which in our formulation is either $\mathbb{NE}(\bar{\mathcal{F}})$ or a perturbed version of it, is classified as follows:
\begin{itemize}
\item {\bf (Global Attractiveness)} The set $\mathbb{E}(\bar{\mathcal{F}})$ is \underline{globally attractive} if for every initial condition $(q(0),x(0))$ in $\mathbb{R}^n \times \mathbb{X}$, the solution trajectory $(q,x)$ of the mean closed loop model (\ref{eq:ClosedLoop}) is such that the following limits hold:
\begin{subequations}
\begin{align}
\lim_{t \rightarrow \infty} \left ( \inf_{z \in \mathbb{E}(\bar{\mathcal{F}}) } \|x(t) -z \| \right ) &=0 \\
\lim_{t \rightarrow \infty} \big \|p(t) - \bar{\mathcal{F}}\big( x(t) \big ) \big \| &= 0
\end{align}
\end{subequations}
\item {\bf (Lyapunov Stability)} The set $\mathbb{E}(\bar{\mathcal{F}})$ is Lyapunov stable if for every open set $\mathbb{O}$ in $\mathbb{R}^{2n}$ that contains  
\begin{multline*}
  \mathbb{A} \overset{\mathrm{def}}{=} \{(s,z) \in \mathbb R^n \times \mathbb X \ \big| \ z \in \mathbb{E}(\bar{\mathcal{F}}) \\ \text{ and } \mathcal H(s,z) = \bar{\mathcal{F}}(z)\},  
\end{multline*}
there is another open set $\mathbb{O}'$ that contains $\mathbb{A}$ for which the following holds:
\begin{multline}
 (q(0),x(0)) \in \mathbb{O}' \cap (\mathbb{R}^n \times \mathbb{X}) \ \implies  \\ (q(t),x(t)) \in \mathbb{O}, \ t \geq 0
\end{multline}
\item {\bf (Global Asymptotic Stability)} The set $\mathbb{E}(\bar{\mathcal{F}})$ is globally asymptotically stable if it is globally attractive and Lyapunov stable.
\end{itemize}

This definition stated above adapts to our framework analogous concepts used in~\cite{Hofbauer2009Stable-games-an}.
\end{definition}

\subsubsection*{Our Approach to Solving Problem~\ref{prob:MainProblem}}
In addition to leveraging the use of and generalizing the passivity techniques proposed in \cite{Fox2013Population-Game} to solve Problem~\ref{prob:MainProblem}, in \S\ref{sec:PassivityConcepts} we introduce new key concepts that ultimately lead to Lemma~\ref{lem:MainLemma}. The latter is a central convergence result that will allow us to propose solutions to Problem~\ref{prob:MainProblem} for the two important EDM classes studied in \S\ref{sec:NashStationaryEDMGlobalConvergence} and~\S\ref{sec:PBR}. It will follow from our analysis in these sections that, in the particular case when the PDM is a memoryless map specifying a population game, our sufficient conditions for global attractiveness and global asymptotic stability are analogous to those in~\cite{Hofbauer2009Stable-games-an}.

In \S\ref{sec:AnticipatoryPDM}, we also extend and adapt to our context an important PDM class proposed in~\cite{Fox2013Population-Game}. We also provide extensions of \cite[Theorem~4.6]{Fox2013Population-Game}.


\section{Population Games and Evolutionary Dynamics: Examples in Engineering Systems and Optimization}
\label{sec:EngineeringExamplesEvDyn}

We start by describing three examples (Examples~\ref{example:conjection5link}-\ref{ex:TaskAllocation}) that illustrate how population games can be used to formulate decision-making problems in multi-agent engineering systems. Later on in \S\ref{sec:exmaples}, we will illustrate how our methodology can be used to ascertain the stability of the mean closed loops whose PDMs are the population games specified in the examples. More importantly, in \S\ref{sec:exmaples} we also explain how our methods can be used to characterize stability when the PDM is a dynamic modification of the population games in the examples. 


\begin{figure} [t]
\centering

\subfigure[]{
\begin{tikzpicture}[auto, node distance=7em,>=latex']

\node [circle,draw=black, fill=green!20, inner sep=0pt,minimum size=20pt] (origin) at (0,0) {\bf \large O};

\node [circle,draw=black, fill=red!20, inner sep=0pt,minimum size=20pt] (destination) at (5,0) {\bf \large D};

\node [circle,draw=black, fill=black!25, inner sep=0pt,minimum size=5pt] (vertex1) at (2.5,1.5) {};

\node [circle,draw=black, fill=black!25, inner sep=0pt,minimum size=5pt] (vertex2) at (2.5,-1.5) {};

\draw[thick,color=black, ->] (origin) --  (vertex1) node [pos=.5] {link $1$};

\draw[thick,color=black, ->] (vertex1) --  (destination) node [pos=.5] {link $2$};

\draw[thick,color=black, ->] (vertex1) --  (vertex2) node [pos=.5] {link $3$};

\draw[thick,color=black, ->] (origin) --  (vertex2) node [pos=.5, below left] {link $4$};

\draw[thick,color=black, ->] (vertex2) --  (destination) node [pos=.5, below right] {link $5$};

\end{tikzpicture} \label{Fig:CongestionGameExample}}



\subfigure[]{
\begin{tikzpicture}[auto, node distance=7em,>=latex']

\node [circle,draw=black, fill=green!20, inner sep=0pt,minimum size=10pt] (origin) at (0,0) {};

\node [circle,draw=black, fill=red!20, inner sep=0pt,minimum size=10pt] (destination) at (2.5,0) {};

\node [circle,draw=black, fill=black!25, inner sep=0pt,minimum size=2.5pt] (vertex1) at (1.25,0.75) {};

\node [circle,draw=black, fill=black!25, inner sep=0pt,minimum size=2.5pt] (vertex2) at (1.25,-0.75) {};

\node at (1.25,-1.1) {strategy 1};

\draw[thick,color=black, ->] (origin) --  (vertex1);

\draw[thick,color=black, ->] (vertex1) --  (destination);

\draw[thin,color=black!25,dashed] (vertex1) --  (vertex2);

\draw[thin,color=black!25,dashed] (origin) --  (vertex2);

\draw[thin,color=black!25,dashed] (vertex2) --  (destination);

\node [circle,draw=black, fill=green!20, inner sep=0pt,minimum size=10pt] (origin-a) at (2.9,0) {};

\node [circle,draw=black, fill=red!20, inner sep=0pt,minimum size=10pt] (destination-a) at (5.4,0) {};

\node [circle,draw=black, fill=black!25, inner sep=0pt,minimum size=2.5pt] (vertex1-a) at (4.15,0.75) {};

\node [circle,draw=black, fill=black!25, inner sep=0pt,minimum size=2.5pt] (vertex2-a) at (4.15,-0.75) {};

\node at (4.15,-1.1) {strategy 2};

\draw[thin,color=black!25,dashed] (origin-a) --  (vertex1-a);

\draw[thin,color=black!25,dashed] (vertex1-a) --  (destination-a);

\draw[thin,color=black!25,dashed] (vertex1-a) --  (vertex2-a);

\draw[thick,color=black, ->] (origin-a) --  (vertex2-a);

\draw[thick,color=black, ->] (vertex2-a) --  (destination-a);

\node [circle,draw=black, fill=green!20, inner sep=0pt,minimum size=10pt] (origin-b) at (5.8,0) {};

\node [circle,draw=black, fill=red!20, inner sep=0pt,minimum size=10pt] (destination-b) at (8.3,0) {};

\node [circle,draw=black, fill=black!25, inner sep=0pt,minimum size=2.5pt] (vertex1-b) at (7.05,0.75) {};

\node [circle,draw=black, fill=black!25, inner sep=0pt,minimum size=2.5pt] (vertex2-b) at (7.05,-0.75) {};

\node at (7.05,-1.1) {strategy 3};

\draw[thick,color=black, ->] (origin-b) --  (vertex1-b);

\draw[thin,color=black!25,dashed] (vertex1-b) --  (destination-b);

\draw[thick,color=black, ->] (vertex1-b) --  (vertex2-b);

\draw[thin,color=black!25,dashed] (origin-b) --  (vertex2-b);

\draw[thick,color=black, ->] (vertex2-b) --  (destination-b);
\end{tikzpicture} \label{Fig:StrategiesCongestionGameExample}}
\caption{Congestion game example with one origin/destination pair: (a) depicts the topology of the links and (b) illustrates the $3$ strategies representing all possible routes from the origin to the destination.}
\end{figure}
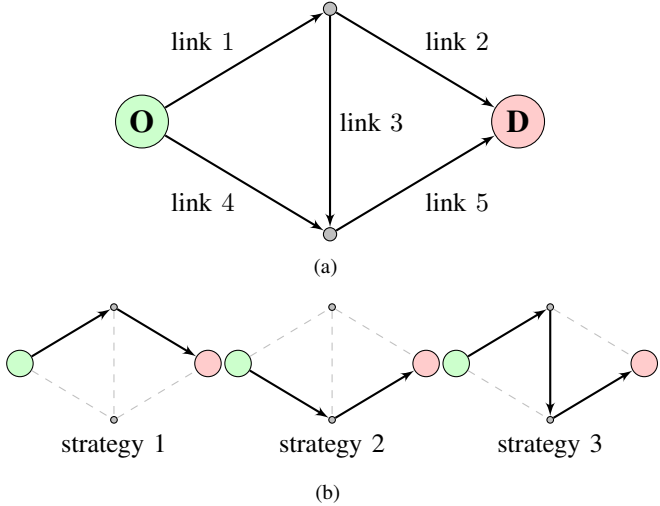

\begin{example}{\bf (Congestion game with 3 routing strategies)}
\label{example:conjection5link}
Consider the congestion game associated with the graph in Fig.~\ref{Fig:CongestionGameExample}, in which there is a single origin-destination pair. We consider that there are three strategies represented by the routes in~Fig.~\ref{Fig:StrategiesCongestionGameExample}. The delay caused by utilization at each link~$i$ is quantified by a nondecreasing continuously differentiable function $\mathscr{D}_i:[0,1] \rightarrow \mathbb{R}_+$. The following population game quantifies the effect of the delays across the links of every strategy for a given strategy profile $z$ in $\mathbb{X}$:
\begin{equation}
    \mathcal{F}^{Ex.\ref{example:conjection5link}}(z)= - \begin{bmatrix} \mathscr{D}_1(z_1+z_3)+\mathscr{D}_2(z_1) \\ \mathscr{D}_4(z_2)+\mathscr{D}_5(z_2+z_3) \\ \mathscr{D}_1(z_1+z_3)+\mathscr{D}_3(z_3)+\mathscr{D}_5(z_2+z_3) \end{bmatrix}
\end{equation} The fact that the Jacobian $\mathcal{DF}^{Ex.\ref{example:conjection5link}}(z)$ is symmetric and negative definite for all $z$ in $\mathbb{X}$ implies that $\mathcal{F}^{Ex.\ref{example:conjection5link}}$ is (i) a strictly contractive game and (ii) it is also a strictly concave potential game.
\end{example}

\begin{figure} [t]
\centering
\begin{tikzpicture}[auto, node distance=7em,>=latex']

\tikzstyle{dra}=[draw, fill=orange!20, text width=5em, text centered, minimum height=2.5em]
\tikzstyle{ann} = [above, text width=5em, text centered]
\tikzstyle{epu} = [dra, text width=10em, fill=blue!10, minimum height=5em, rounded corners]

\node (epu) [epu]  {Electric Power Utility};
\path (epu.south)+(-3.5,-2.5) node (dra1) [dra] {\small Demand Response \\ Agent 1};
\path (epu.south)+(-1.0,-2.5) node (dra2)[dra] {\small Demand Response \\ Agent 2};
\path (epu.south)+(1.2,-2.7) node (dots)[ann] {$ \bullet\bullet\bullet$};
\path (epu.south)+(3.4,-2.5) node (dra3)[dra] {\small Demand Response \\ Agent M};   

\draw[ thick, blue, <-]([xshift=-0.1cm]dra1.north)-- node [above right = -.5cm and -.6cm, rotate=38] {\footnotesize cost signal} ([xshift=-0.1cm]epu.220);
\draw[ thick, red!80, ->]([xshift=0.1cm]dra1.north)-- node [below left = -.8cm and -1.0cm, rotate=38] {\footnotesize demand response} ([xshift=0.1cm]epu.220);
    
\draw[ thick, blue, <-]([xshift=-0.1cm]dra2.north)-- node [above right = -.5cm and -.5cm, rotate=40] {} ([xshift=-0.1cm]epu.270);
\draw[ thick, red!80, ->]([xshift=0.1cm]dra2.north)-- node [below left = -.8cm and -.9cm, rotate=40] {} ([xshift=0.1cm]epu.270);

\draw[ thick, blue, <-]([xshift=-0.1cm]dra3.north)-- node [above right = -.5cm and -.5cm, rotate=40] {} ([xshift=-0.1cm]epu.320);
\draw[ thick, red!80, ->]([xshift=0.1cm]dra3.north)-- node [below left = -.8cm and -.9cm, rotate=40] {} ([xshift=0.1cm]epu.320);
\end{tikzpicture}
\caption{Electricity demand response game example consisting of one electric power utility and multiple demand response agents.}
\label{Fig:DemandResponse}
\end{figure}
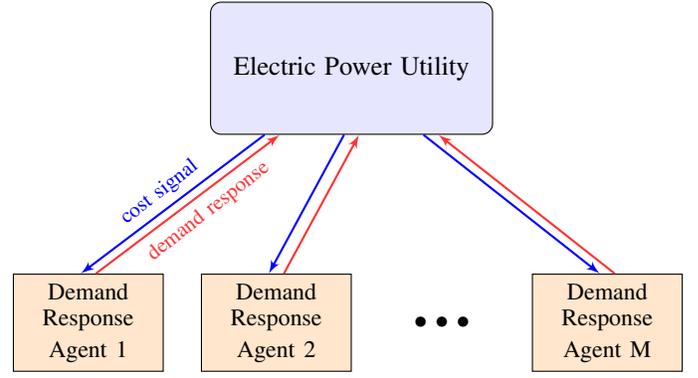

\begin{example} {\bf (Electricity demand response game with 3 strategies)} \label{ex:DemandResponse}
Consider an electricity demand response game where an Electric Power Utility (EPU) aims at reducing  power consumption of its consumers. The EPU employs Demand Response (DR) agents at consumer premises to control power consumption of participating consumers who, in return, receive incentives depending on the amount of reduction in power consumption. See Fig.~\ref{Fig:DemandResponse} for an illustration and also \cite{7422154} for more details on the demand response game.

In this example, we consider that $3$ demand response strategies are available to the DR agents, where each strategy~$i$ corresponds to the consumer's commitment to reduce his/her power consumption by $y_i~kW$ and that, for each strategy profile $z \in \mathbb X$, a strictly convex function $f: \mathbb X \to \mathbb R$ specifies the total incentive $f(z)$ the EPU provides to the consumers. The EPU primarily wants to achieve aggregate load reduction by $C~kW$ while minimizing the total incentive $f(z)$. This can be concisely formulated as optimization seeking $z^\ast \in \mathbb X$ that minimizes the cost function $\bar f: \mathbb X \to \mathbb R$ given by
\begin{align} \label{eq:CostFunction}
    \bar f(z) = f(z) + \nu \left( z^T y - C \right)
\end{align}
where the positive constant $\nu$ weighs the importance of meeting the aggregate load reduction $C$ with respect to the total incentive $f(z)$.

Each DR agent revises its strategy based on the following cost signal broadcast by the EPU:
\begin{align} \label{eq:DemandResponseGame}
    \mathcal F^{Ex.\ref{ex:DemandResponse}} \left( z \right) = \begin{bmatrix}
    -\nabla_{z_1} f(z) - \nu y_1 \\
    -\nabla_{z_2} f(z) - \nu y_2 \\
    -\nabla_{z_3} f(z) - \nu y_3 \\
    \end{bmatrix}
\end{align}
Note that $\mathcal F^{Ex.\ref{ex:DemandResponse}}$ is essentially (the negative of) the gradient of the cost function $\bar f$ and the DR agents would perform strategy revision along the gradient of $\bar f$. By construction $\mathcal F^{Ex.\ref{ex:DemandResponse}}$ is a strictly concave potential game and has a unique Nash equilibrium which is also the minimizer of \eqref{eq:CostFunction}.
\end{example}

\begin{figure} [t]
\centering
\includegraphics[trim={0.6in 1.5in 0 0},width=2.5in]{./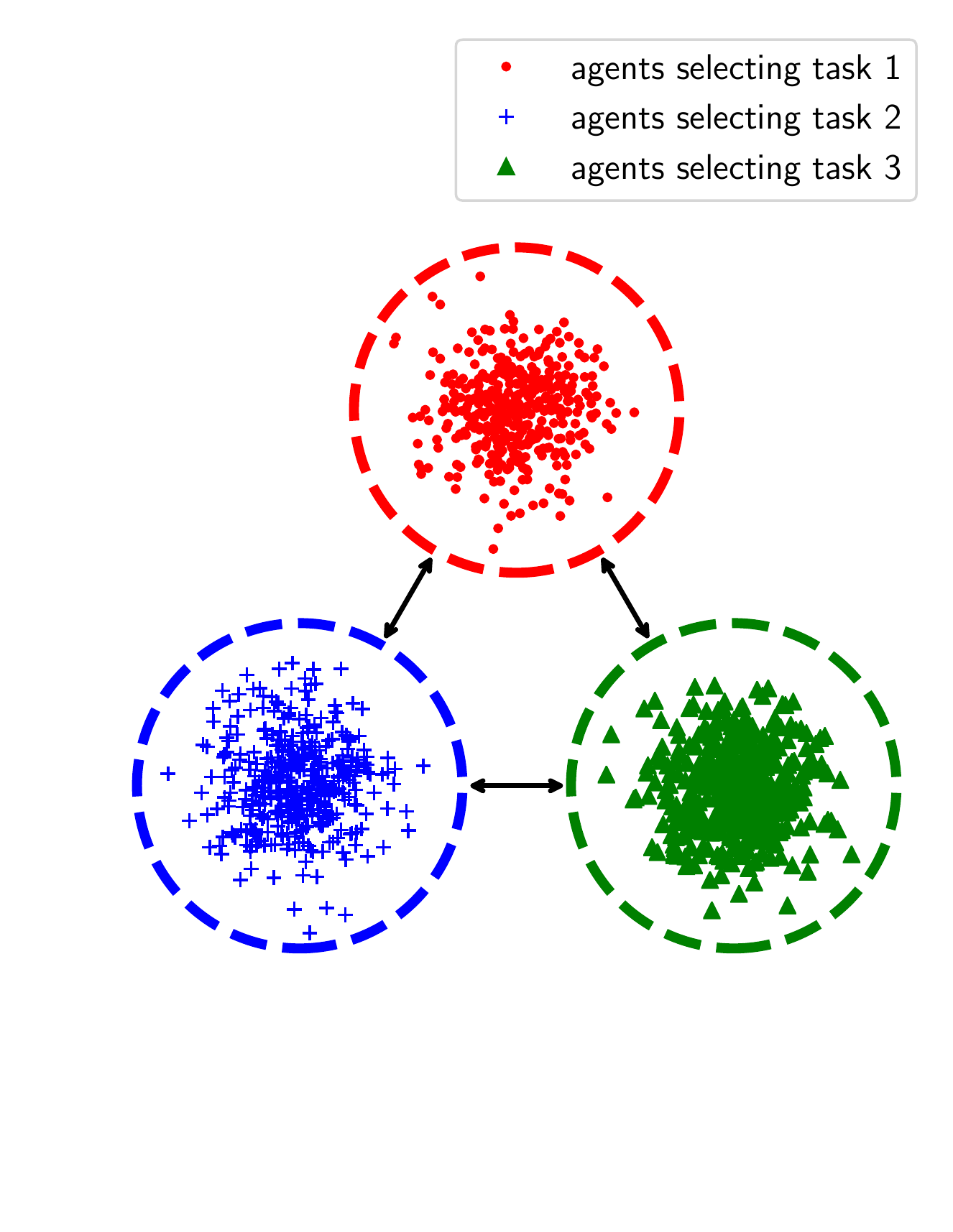}
\caption{Task allocation game example with $3$ tasks.}
\label{fig:TaskAllocationFigure}
\end{figure}

\begin{example}{\bf (Task allocation game with 3 tasks)} \label{ex:TaskAllocation}
Consider a task allocation game, as illustrated in Fig.~\ref{fig:TaskAllocationFigure}, in which 
the agents select one among $3$ tasks and receive the reward specified by quasi-concave functions $\mathscr{R}_i: [0,1] \to \mathbb R_+$.
A payoff function for the game can be defined as:
\begin{align}
    \mathcal F^{Ex.\ref{ex:TaskAllocation}} \left( z \right) = \begin{bmatrix}
    \mathscr{R}_1 \left( z_1 \right) \\
    \mathscr{R}_2 \left( z_2 \right) \\
    \mathscr{R}_3 \left( z_3 \right)
    \end{bmatrix}
\end{align}
where $\mathscr{R}_i \left( z_i \right)$ represents, given the population state $z \in \mathbb X$, the reward assigned to the agents selecting task~$i$. Note that because $\mathscr{R}_i$ is quasi-concave, the population game defined by $\mathcal F^{Ex.\ref{ex:TaskAllocation}}$ may not be contractive.

\end{example}


\subsection{Additional Examples of Application to Engineering and Optimization}

We now proceed to briefly surveying additional existing applications of population and evolutionary games to engineering problems, resource allocation and optimization. In these examples the PDM is either memoryless (population game) or has a fixed delay but no other internal dynamics. Still, they describe scenarios in which our results would be relevant, such as if the underlying game were to be modified to form a PDM that includes dynamics or if one would consider protocols more general than those of the replicator type. Recall that, as we discussed in~\S\ref{subsec:OutlineOfContrib}, by including additional dynamics in the PDM one can model important effects, such as inertia and anticipation in the agents' perception of the payoffs. Using numerical examples for the population games discussed in Examples~\ref{example:conjection5link}-\ref{ex:TaskAllocation}, we will examine such cases in \S\ref{sec:exmaples}.


\subsubsection{Traffic assignment}
\label{subsubsec:TrafficAssignmentExample} The seminal work reported in~\cite{Smith1984The-stability-o} investigates a traffic assignment problem in which the strategies are the possible routes each agent may traverse to travel from one location to another. The main result in~\cite{Smith1984The-stability-o} shows that certain simple protocols, which lead to what was latter called Smith dynamics~\cite[Example~5.6.1]{Sandholm2010Population-Game} and in our article is referred to as Smith EDM (see Example~\ref{ex:smith}), are guaranteed to steer the mean population state towards the Wardrop~\cite{Wardrop1952Some-theoretica} equilibrium of a given network cost-flow function. In the context of population games, such a cost is a congestion population game, of which Example~\ref{example:conjection5link} is an instance of, and the Wardrop equilibrium coincides with the befitting version of Nash's specified in Definition~\ref{def:NashForPDM}. 

\subsubsection{Distributed control strategies for resource allocation and optimization} The distributed control strategies discussed in~\cite{Quijano2017The-role-of-pop} hinge on simple protocols to regulate flow distribution in urban drainage systems, voltage split for lighting systems and economic power dispatch in microgrids. The underlying population games determining the payoff for these applications have a strictly concave potential. In this setting, the same Smith EDM mentioned in $\S$\ref{subsubsec:TrafficAssignmentExample}, or appropriate modifications thereof, guarantees that the mean population state will always converge to the maximum of the potential, which is also the unique Nash equilibrium of the population game. In fact, the potential can be used as a Lyapunov function to establish that its maximum is globally asymptotically stable. As is explained in~\cite{Sandholm2001Potential-games} and~\cite[\S3.1]{Sandholm2010Population-Game} the maximum of a strictly concave potential is globally asymptotically stable under any protocol satisfying positive correlation and Nash stationarity. More generally, the set of Nash equilibria of a potential game~-~as population games having a potential are called~\cite{Monderer1996Potential-games}~-~can be determined by the Karush-Kuhn-Tucker (KKT) conditions of local optimality of the potential. These results highlight the relevance of our formulation for distributed optimization and engineering applications in which a potential game is constructed and implemented by a coordinator, as is done in~\cite{Quijano2017The-role-of-pop} and references therein, to promote distributed resource allocation. Notably, as explained in~\cite[Example~3.1.2]{Sandholm2010Population-Game}, congestion population games in general admit a potential. In fact, the potential has been used since the early work in~\cite{Beckmann1956Studies-in-the-} to prove the existence of and also compute the Nash equilibria of congestion games. Population games were also used in~\cite{doi:10.1080/00207179.2016.1231422} to model and analyze dynamical resource allocation problems.

\subsubsection{Distributed control strategies for wireless networks} The approach in~\cite{Tembine2010Evolutionary-ga} uses the formalism of population games to design and analyze the performance of distributed algorithms that either trigger transmission or regulate the transmission power of a large number of agents in the scenarios of slotted ALOHA-based access network or W-CDMA power control, respectively. In these scenarios, the strategy sets, or equivalently the state of each agent, are $\{T,N\}$ (transmit or not) and $\{P_H,P_L\}$ (high or low power), respectively. The payoff for the ALOHA-based access network scenario captures the net reward of transmitting successfully subject to a cost quantifying the power spent. A population game captures the effect of collisions that may arise from simultaneous transmissions. In the W-CDMA power control scenario, the payoff captures the ability to broadcast to other nodes and the underlying population game also models the effect of interference that occurs when neighboring agents transmit concurrently. 

\subsubsection{Demand response management in smart grids}
The work of \cite{7422154, 7379949} applies the population game framework to demand response problems in smart grids. In particular, \cite{7422154}, which  motivates Example~\ref{ex:DemandResponse}, explains the formalism of a potential game to address an electricity demand reduction problem. Each strategy in the game describes the level of reduction in electricity consumption that a demand response agent would accept, and the underlying payoff mechanism incentivizes each agent proportional to the reduction in the agent's electricity consumption. The game attains a concave potential with which the authors establish the existence and stability of an evolutionarily stable strategy~\cite{Maynard-Smith1973The-logic-of-an} under the BNN and Smith EDMs, which are explained in Examples~\ref{ex:bnn} and \ref{ex:smith}, respectively, and the replicator dynamics~\cite{Taylor1978Evolutionarily-}.

On the other hand, \cite{7379949} describes two different population game formalisms: a participation game and consumption scheduling game. The participation game is designed to investigate, based on a given electricity pricing mechanism, how consumers adjust electricity consumption levels. Whereas the consumption scheduling game is devised to analyze consumers' time-of-use patterns on electricity consumption. The replicator protocol is used to define a decision-making model of the consumers and to analyze transient and asymptotic behavior of the consumers' electricity consumption patterns in each game.



\subsubsection{User association in cellular networks}
The work in \cite{7244342} applies a potential game approach to an user association problem. In this game, a group of mobile stations (MSs) forms a population and a strategy adopted by each MS is defined as which base station (BS) it selects to be associated with. A payoff mechanism of the game captures how the associations between BSs and MSs affect flow-level efficiency and energy consumption at each BS. Leveraging the fact that the game admits a concave potential, the authors establish the existence of a unique Nash equilibrium, which is also socially optimal, and its stability under a distributed association control algorithm stemmed from the best response protocol~\cite[\S6.1]{Sandholm2010Population-Game}.

\subsubsection{Computation offloading over networks}
In \cite{8647522}, the population game framework is applied in a computation offloading problem over cellular networks. In this game, given a computation task, each mobile user (MU) makes a binary decision whether to perform the task locally using its own computation capability or remotely at a cloud server, where the remote server has more powerful computation capability than does the MU but the remote computation requires data transmission over the network. The population game formalism describes the trade-off between the time required to complete given tasks and data transmission cost incurred during the remote computation. Through numerical studies, the authors show that the Nash equilibrium of the game attains stability under the BNN and logit protocols, which are explained in Examples~\ref{ex:bnn} and \ref{ex:logit}, respectively.

\section{Population Games, Evolutionary Games and Mean Field Games: A Brief Comparison}
\label{sec:ComparionPopulationMeanF}


Our approach builds on the established framework of evolutionary dynamics and population games to take on the more general case in which the payoff is governed by a PDM. Hence, in order to precisely situate our work in light of existing related methods and approaches, we proceed to expounding the commonalities between our population-game-inspired framework and that of evolutionary games, and we will also discuss the most salient differences relative to mean field games. 

\subsection{Population games and evolutionary games: background and origins}
\label{sec:origins}

Starting with the foundations laid by Maynard-Smith and Price in~\cite{Maynard-Smith1973The-logic-of-an}, evolutionary game theory has been providing a tractable and mathematically rigorous framework to model and analyze the dynamics of natural selection according to which phenotypes prosper or flounder based on their fitness, as determined by their competitive advantages or drawbacks relative to each other and the environment. An evolutionary game that determines the fitness in terms of the traits of the intervening organisms, and the evolutionary mechanism by which reproduction is modulated based on the fitness form the basic tenets of the theory~\cite{Hofbauer2003Evolutionary-ga}. 

Subsequent work, mostly by mathematicians and economists~\cite{Sandholm2010Population-Game}, recognized that this theory could be adapted and extended in the following ways to model the evolution of strategy choices by a population of self-governing agents that are coupled only through a payoff structure that influences their decisions:
\begin{itemize}
\item The mathematical framework that is subjacent to evolutionary game theory can also be used to analyze the choice dynamics of populations of agents that are given a set of strategies to choose from and which they can repeatedly reconsider. According to this approach, in its simplest form, the mechanism governing the reproduction or depletion rate of a phenotype in terms of its fitness is analogous to the one determining the rate of increase or reduction of the prevalence of each strategy in one or more populations. More specifically, the phenotype selection mechanism and the concept of fitness for the former are analogous to the so-called strategy revision protocols and payoff used by the latter.
\item Payoff mechanisms that are relevant for economics and engineering, such as congestion games, can be adopted and handled as natural generalizations of those that determine the fitness in the evolutionary biology context. In the context of large populations, in which deterministic approximations are suitable, the payoff mechanism is denoted as a population game when it is a function of the so-called social state comprising the portions of every population choosing each strategy.
\item Equilibrium selection notions for evolutionary games such as ESS (evolutionarily stable strategy~\cite{Maynard-Smith1973The-logic-of-an}) that have important implications for stability analysis can be further generalized when considering contractive (or stable) population games~\cite{Hofbauer2007Stable-games,Hofbauer2009Stable-games-an}. As is thoroughly described in~\cite{Vincent1988The-evolution-o}, the concept of ESS can be further refined to account for strategies represented in uncountable subsets of coordinate spaces that are suitable to describe long-term adaptation of the strategy set. A model in which both the mean population state and the strategy are dynamic is described by the coupled equations in~\cite[(16)]{Vincent2000Evolution-and-C}~-~a coupled dynamic that is reminiscent of the mean closed loop model considered in \S\ref{sec:closedloop}.
\end{itemize}

After comparing~\cite[Chapters~3-5]{Weibull1995Evolutionary-ga} and~\cite[Chapter~7]{Hofbauer1998Evolutionary-ga} with~\cite[Chapter~4-8]{Sandholm2010Population-Game}, as well as references therein, one concludes that although work associated with evolutionary game theory is often focused on replicator dynamics~\cite{Taylor1978Evolutionarily-} or extensions of it, there is a significant overlap of the goals and the tools used for the analysis of stability relative to what is adopted in the study of population games and evolutionary dynamics.

\subsection{Comparing Population Games and Mean Field Games}
\label{sec:MFGComparison}

The description in~\cite[chapter~2]{Gomes2015Economic-models} suggests that the class of \underline{mean field games (MFG)} that is most similar to our framework is that of first order MFG. Starting with the pioneering work in~\cite{Huang2007Large-populatio,Huang2010The-NCE-mean-fi,Lasry2007Mean-field-game}, most work on first order MFG in an engineering context seeks to control a collective of structurally dissociated dynamical systems (agents) in the limit when the number of agents tends to infinity. Each agent in such a setting is steered by a causal control policy that depends on its own state and the so-called (deterministic) {\it mean state}, which is analogous to the (deterministic) mean population state adopted in our approach, or mean social state when there are two or more populations, in that both specify at every instant the portions of the population that attain every possible state or set of states. The control law is the same across an entire class (subgroup of identical agents), which is analogous to imposing the same protocol for all members of each population in the multi-population setting. In light of these similarities it is important that we list the following fundamental differences between the approach adopted here, and that of first order MFG. Examples of application for our framework and that of MFG to engineering problems are weaved in our discussion, which will also clarify the most salient differences between the two approaches.

\subsubsection{Main differences between MFG and our approach} In first order MFG, the set of {\it strategies} available to each agent class comprises control \underline{policies} that govern the control action as a causal function of the mean state and the state of each agent. The so-called Nash Certainty Equivalence principle determines the control policy for each agent class from the solution of a Hamilton-Jacobi-Bellman equation. We proceed to describing the most significant differences between our approach and that of MFG.

\paragraph{ States and strategies} In our approach, the state of each agent takes values in the strategy set. As a consequence, the strategy (or state) of each agent is revised repeatedly, which is in sharp contrast with the MFG approach in which the control policy of each agent class that is computed once off-line and used throughout the life of the MFG is a strategy. 

\paragraph{Cost and payoff mechanism} While a cost is used to compute the control policies for each agent class of a MFG, in our approach a payoff mechanism is itself the feedback component providing the payoff vector used by the agents to revise their strategies. In the particular case in which the payoff mechanism is a (memoryless) population game and the revision protocols are of the {\it best response} type, our approach is somewhat {\it similar} to a repeated game~\cite{Fudenberg1991Game-theory} in which the stage game would be the population game. In this analogy, we emphasize the qualifier ``similar" because unlike a standard repeated game, the techniques, models, concepts and objectives of our approach are tailored to deal with large populations.


\paragraph{Equilibrium concepts} Characterization of relevant mean social state equilibria for our approach is inspired on Nash's and defined with respect to the payoff mechanism. When the payoff mechanism is a population game, the set of Nash equilibria, or perturbed versions of it, is functionally characterized in the usual way, and may be interpreted in the mass-action sense proposed in~\cite{Jr.1951Non-Cooperative} and further analyzed in \cite{Weibull1995The-mass-action}. In contrast, the so-called decentralized $\epsilon$-Nash concept~\cite{Huang2007Large-populatio}, which can be viewed as a relaxation of the person-to-person optimality notion~\cite{Mahajan2012Information-str} applied to the underlying cost, characterizes the relevant equilibria set for a MFG.

\paragraph{Convergence to equilibria} In contrast to MFG, the main goal of our framework is to establish concepts and methods to determine whether the populations will react in a way that the mean population state converges towards equilibria of interest. Convergence to an equilibria set, such as Nash's, gives credence to both the ability of the populations to self-organize using simple (bounded rationality) protocols in a distributed fashion and the value of the equilibria concept to predict long-term behavior.


\paragraph{An MFG approach to power control for CDMA/CNO} 

The work in~\cite{Aziz2017A-mean-field-ga} puts forth a MFG approach to the design of power control for code division multiple access (CDMA) cellular network optimization (CNO). The state of each agent is the transmission power governed by the control policy, which is the pre-optimized strategy assigned to the class the agent belongs to. When the agents are mobile the state also includes their locations in the Cartesian plane, in which case the control policy not only governs the transmission power of each agent but also steers it. A careful comparison between this example with those of \S\ref{sec:EngineeringExamplesEvDyn} will certainly help clarify the differences between our approach and that of MFG.

\subsubsection{Advantages of our approach relative to MFG} Relative to the MFG approach, whose benefits are self-evident from the fact that the control policies are optimal for a given cost, our approach has the following advantages:
\paragraph {Simplicity of protocols} In our formulation, the protocols that the agents use to revise their strategies are simple to implement and amenable to analysis supported on conventional evolutionary principles. In addition, the agents may not be synchronized because independent Poisson clocks determine when each agent is allowed to revise its strategy. In contrast, the policies followed by the agents in the MFG approach are optimal, but may be intricate and difficult to implement in practice.
\paragraph{Information structure} As is discussed in~\cite{Sandholm2010Pairwise-compar}, the information structures required to implement the various types of revision protocols vary significantly. In contrast with MFG, in general, the agents do not need to access the entire mean social state.

\section{Relating The Mean Closed Loop Model With The Population State and Payoff Of Large Populations}
\label{sec:LargePopulationLimit}

The framework in~\cite{Sandholm2003Evolution-and-e} uses classical results~\cite{Kurtz1970Solutions-of-Or} to show that, as the size of the population tends to infinity, the solutions of the mean dynamics approximate with arbitrary accuracy, in the sense of \cite[Theorem~4.1]{Sandholm2003Evolution-and-e}, the realizations of the population state. Furthermore, such an analysis, which can also be found in~\cite[Chapter~10]{Sandholm2010Population-Game}, is further refined in~\cite[Lemma~1]{Benaim2003Deterministic-a}. 

\subsection{Finite population framework}
Likewise, in this section, we proceed to outlining the construction of a finite-population framework whose population state and payoff can be approximated with arbitrary accuracy uniformly over any given finite time interval by the solution of the mean closed loop model~(\ref{eq:ClosedLoop}) with probability approaching one as the population size tends to infinity. Our approach is to modify the framework in~\cite{Sandholm2003Evolution-and-e,Benaim2003Deterministic-a} and~\cite[Chapter~10]{Sandholm2010Population-Game} to comply with ours in which a PDM governs the deterministic payoff. 

Inspired by the construction in~\cite{Sandholm2003Evolution-and-e}, we consider that the population state of a single population with $N$ agents is represented by a right-continuous jump-process $X^N$ taking values in $\mathbb{X}^N$ defined as: 
$$ \mathbb{X}^N \overset{\text{def}}{=} \Big( \text{\footnotesize $\tfrac{1}{N}$} \mathbb{N}^n \Big ) \cap \mathbb{X}$$ where without loss of generality we assume that the population has a unit mass, i.e., $m=1$.
A Poisson process with positive rate $\varrho$ governs the revision times of each agent. The $N$ Poisson processes governing the revision times for the agents are independent. Hence, a Poisson process with rate $N\varrho$ governs the jump times of $X^N$. 

Given a pair $(\mathcal{G},\mathcal{H})$ satisfying the conditions of Definition~\ref{def:PDM}, the payoff vector at time $\tau$ is represented by $P^N(\tau)$, and is obtained in terms of $X^N$ and a pre-determined initial condition $Q^N(0)$ in $\mathbb{R}^n$ as the unique solution of:
\begin{equation}
\label{eq:FinitePopulationPDM}
\quad \begin{matrix}
\dot{Q}^N(t) = & \mathcal{G} \big (Q^N(t),X^N(t) \big ) \\
P^N(t)  = &\mathcal{H} \big ( Q^N(t),X^N(t) \big ) 
\end{matrix}, \quad t \geq 0 
\end{equation}

In contrast to~\cite{Sandholm2003Evolution-and-e,Benaim2003Deterministic-a}, here we assume that $\mathcal{F}(X^N(\tau))$ is replaced with $P^N(\tau)$ in the implementation\footnote{See also~\cite[Observation~10.1.2]{Sandholm2010Population-Game}.} of the protocol that models the strategy revision process. 

Following the approach in~\cite[\S4.1.2]{Sandholm2010Population-Game}, without loss of generality, we assume that the following holds:
$$\varrho \geq \sum_{j \neq i}^n \mathcal{T}_{ij}(r,z). \quad r \in \mathbb{R}^n, \ z \in \mathbb{X}, \ i \in \{1,\ldots,n\}$$

For each agent whose current strategy is $i$, the quantity $\tfrac{\mathcal{T}_{ij}(P^N(\tau),X^N(\tau))}{\varrho}$ determines the probability that it will switch to strategy $j$, conditioned on the event that it is allowed to switch at time~$\tau$.

More specifically,  $X^N$ is governed by the following probability transition law: 
\begin{subequations}
\label{eq:XFinitePopTransProb}
\begin{align} \nonumber
& \mathbb{P}\Big (X_j^N(\tau)-z_j = \tfrac{1}{N} \ , \ X_i^N(\tau)-z_i = - \tfrac{1}{N}  \ \Big | \ \mathfrak{T} \Big )  \\  & \qquad \qquad \qquad \qquad \qquad = z_i\frac{ \mathcal{T}_{ij}\big ( r,z )}{\varrho} , \ i,j \in \{1,\ldots,n\} \\
& \mathbb{P}\Big (X^N(\tau)=z  \ \Big | \ \mathfrak{T} \Big ) = 1 - \sum_{i=1}^n \sum_{j \neq i}  z_i \frac{ \mathcal{T}_{ij}\big ( r,z )}{\varrho}
\end{align}
\end{subequations} where $\mathfrak{T}$ is the event that there are consecutive jumps at times $\tau^-$ and $\tau$, and ${\big( P^N(\tau^-),X^N(\tau^-) \big)=(r,z)}$ holds for pre-specified $z$ and $r$ in $\mathbb{X}^N$ and $\mathbb{R}^n$, respectively.

Given that, for any consecutive jump times $\tau^-$ and $\tau$,  $X^N(t)$ is constant for $t$ in $[\tau^-,\tau)$, from~(\ref{eq:FinitePopulationPDM}) we can deduce the following update rule:
\begin{equation}
\label{eq:QJumpUpdate}
Q^N(\tau) - Q^N(\tau^-)= \int_{\tau^-}^{\tau} \mathcal{G} \big (Q^N(\gamma),X^N(\tau^-) \big ) \, \mathrm d \gamma
\end{equation} where $Q^N:[\tau^-,\tau) \rightarrow \mathbb{R}^n$ is the solution of~(\ref{eq:FinitePopulationPDM}) starting with the initial condition $Q^N(\tau^-)$.

From~(\ref{eq:XFinitePopTransProb}), we also infer that $X^N$ is unchanged if we replace $P^N$ and $Q^N$ with the right-continuous jump processes $\check{P}^N$ and $\check{Q}^N$ that get updated at every jump time $\tau$ according to $\check{P}^N(\tau)=P^N(\tau)$ and $\check{Q}^N(\tau)=Q^N(\tau)$. Consequently, the update rules specified in~(\ref{eq:XFinitePopTransProb}) and~(\ref{eq:QJumpUpdate}) imply that the pair $(X^N,\check{Q}^N)$ is a right-continuous Markov jump process that satisfies the conditions of the framework in~\cite{Kurtz1970Solutions-of-Or}. In addition, for every $(z,s)$ in $\mathbb{X}^N \times \mathbb{R}^n$ and $t \geq 0$, the following holds:
\begin{subequations}
\label{eq:PrelimAsNTendsToInfty}
\begin{multline}
 \lim_{\delta \downarrow 0} \mathbf{E} \Bigg [  \frac{1}{\delta} \Big( X^N(t+\delta) -z \Big) \ \Bigg |  \\  \ \Big( X^N(t),\check{Q}^N(t) \Big)=(z,s)  \Bigg ] \\  = \mathcal{V}^{\mathcal{H}}(s,z)
\end{multline}
\begin{multline}
 \lim_{\delta \downarrow 0} \mathbf{E} \Bigg [  \frac{1}{\delta} \Big( \check{Q}^N(t+\delta) -s \Big) \ \Bigg |  \\  \ \Big( X^N(t),\check{Q}^N(t) \Big)=(z,s)  \Bigg ] \\  = \mathcal{G}(s,z)
\end{multline}
\end{subequations}

\subsection{Approximation in the limit of large populations}
Hence, using~\cite[Theorem~2.11]{Kurtz1970Solutions-of-Or}, we conclude from~(\ref{eq:PrelimAsNTendsToInfty}) that given any positive $\bar{t}$ and $\delta$, and initial condition $(x(0),q(0))$ in $\mathbb{X} \times \mathbb{R}^n$, the following holds for every sequence of initial states $\{(x^N(0),q^N(0))\}_{N=1}^{\infty}$ that converges to $(x(0),q(0))$:
\begin{equation}
\label{eq:MainApproxLimitNTendsInfty}
\lim_{N \rightarrow \infty} \mathbb{P} \Bigg ( \sup_{0 \leq t \leq \bar{t}} \Big \| \big (X^N(t),\check{Q}^N(t) \big ) - \big (x(t),q(t) \big ) \Big \| > \delta \Bigg ) = 0
\end{equation}
where $(x(t),q(t))$ is the solution of~(\ref{eq:ClosedLoop}) and we assume that for each $N$ the process $(X^N,\check{Q}^N)$ is initialized with $X^N(0)=x^N(0)$ and  {${\check{Q}^N(0)=q^N(0)}$}. Since $\mathcal{H}$ is Lipschitz continuous, from~(\ref{eq:MainApproxLimitNTendsInfty}), we also conclude that the following holds:
\begin{equation}
\label{eq:MainApproxLimitNTendsInftyWithP}
\lim_{N \rightarrow \infty} \mathbb{P} \Bigg ( \sup_{0 \leq t \leq \bar{t}} \Big \| \big (X^N(t),\check{P}^N(t) \big ) - \big (x(t),p(t) \big ) \Big \| > \delta \Bigg ) = 0
\end{equation}

As we discussed in \S\ref{sec:MeanDynamicsPopulationGames} for the particular case in which the PDM is a population game (memoryless), Theorems~\cite[12.B.3]{Sandholm2010Population-Game} and~\cite[12.B.5]{Sandholm2010Population-Game}, which are derived from work in~\cite{Benaim1998Recursive-algor} and~\cite{Benaim1999Stochastic-appr}, ascertain under unrestrictive conditions that as the population grows, the stationary distribution of the population state tends to concentrate around the smallest globally asymptotically stable set. Although it is beyond the scope of this article, we believe that immediate extensions of Theorems~\cite[12.B.3]{Sandholm2010Population-Game} and~\cite[12.B.5]{Sandholm2010Population-Game} to our context would show that the stationary distribution of $\big (X^N(t),\check{Q}^N(t) \big )$, when it exists, will tend to concentrate around a globally asymptotic stable set of the mean closed loop model as $N$ tends to infinity.

\section{EDM $\delta$-passivity, PDM $\delta$-antipassivity, and Main Supporting Lemma}
\label{sec:PassivityConcepts}

We start this section by defining key EDM and PDM properties, which we will use later on to state the conditions under which our convergence results hold. Subsequently, in \S\ref{subsec:Main Lemma} we state a key supporting lemma that we will use to establish the convergence results presented in \S\ref{sec:NashStationaryEDMGlobalConvergence}~through~\S\ref{sec:PBR}.

\subsection{EDM $\delta$-passivity and Informative Storage Functions}

Given an EDM with input $w$ and state $x$, the following inequality is central for the definition of $\delta$-passivity \cite{Fox2013Population-Game}:
\begin{multline}
\label{eq:DeltaPassive}
\mathcal{S} \big ( x(t), w(t) \big ) - \mathcal{S} \big ( x(t_0), w(t_0) \big ) \leq \\  \int_{t_0}^t \left[ \dot{x}^T(\tau) \dot{w}(\tau) - \eta \dot{x}^T(\tau) \dot{x}(\tau) \right] \, \mathrm d \tau, \\ t \geq t_0, \ t,t_0 \in \mathbb{T}, \ x(t_0) \in \mathbb{X}, \ w \in \mathfrak{P}  
\end{multline} where $\eta$ and $\mathcal{S}:\mathbb{X} \times \mathbb{R}^n \rightarrow \mathbb{R}_+$ are a nonnegative real constant and a map, respectively.

\begin{definition} {\bf (EDM $\delta$-passivity)} \label{def:passivity}  Given an EDM, we adopt the following $\delta$-passivity concepts:
\begin{itemize}
	\item The EDM is said to be \underline{$\delta$-passive} if there is a continuously differentiable $\mathcal{S}$ for which~(\ref{eq:DeltaPassive}) is satisfied with $\eta=0$.
	\item If the EDM is \underline{$\delta$-passive}, let $\eta^*$ be the supremum of all $\eta$ for which there is a continuously differentiable $\mathcal{S}$ satisfying~(\ref{eq:DeltaPassive}). If $\eta^*$ is positive then the EDM is qualified as \underline{$\delta$-passive} with \underline{surplus $\eta^*$}. 
\end{itemize} For either case, the map $\mathcal{S}$ is referred to as a \underline{$\delta$-storage function}. We refer to the EDM generally as \underline{strictly} \underline{output} $\delta$-passive when it is $\delta$-passive with some positive surplus~$\eta^*$.
\end{definition} 

Notice that the larger $\eta^*$ the more stringent the requirement for strict output $\delta$-passivity. When it is positive, we view such $\eta^*$ as a measure of $\delta$-passivity ``surplus".

As is discussed in \cite{Fox2013Population-Game}, an EDM is $\delta$-passive when the following augmented system with input $w^\delta$ and output $x^\delta$ is passive according to its standard definition \cite{Sepulchre1997Constructive-no}:
\begin{subequations}
\label{eq:InterpEDMdPAssive}
\begin{align}
\dot{w}(t) & = w^\delta(t), \quad \quad \quad \quad w(0) \in \mathbb{R}^n, \ w^\delta \in \mathfrak{P}^\delta \\
\dot{x}(t) &= \mathcal{V} \big ( x(t), w(t) \big ), \ \ x(0) \in \mathbb{X} \\
x^\delta(t) &= \mathcal{V} \big ( x(t), w(t) \big ) 
\end{align} 
\end{subequations} where $\mathfrak{P}^\delta \overset{\mathrm{def}}{=} \{ \dot{w} | w \in \mathfrak{P} \}$. Notably, $(x(t),w(t))$ is the state of the augmented system and $\mathcal{S}$ is a storage function for it.  

As we will see later in \S\ref{subsec:Main Lemma}, if a $\delta$-storage function is informative, according to the following definition, then we can use it to establish convergence results for the mean closed loop model.

\begin{definition}{\bf (Informative $\mathcal{S}$) } Let ${\mathcal{S}:\mathbb{X} \times \mathbb{R}^n \rightarrow \mathbb{R}_+}$ be a $\delta$-storage function for a given EDM specified by $\mathcal{V}$. We say that $\mathcal{S}$ is \underline{informative} if it satisfies the following two conditions:
  \begin{subequations}
    \begin{equation}\label{eq:informative1}
      \mathcal{V}(z^*,r^*) = 0 \implies \mathcal{S}(z^*,r^*) = 0, 
    \end{equation}
    \begin{equation}\label{eq:informative2}
      \nabla_z^T\mathcal{S}(z^*,r^*)\mathcal{V}(z^*,r^*) = 0 \implies \mathcal{V}(z^*,r^*) = 0
    \end{equation} 
  \end{subequations} for every $z^*$ and $r^*$ in $\mathbb{X}$ and $ \mathbb{R}^n$, respectively. 
\end{definition}

The implication in~(\ref{eq:informative1}) suggests that, for a constant deterministic payoff, every equilibrium point of the EDM minimizes $\mathcal{S}$. In addition, from ${\frac{\mathrm d}{\mathrm dt}\mathcal{S}(x(t),r) = \nabla_x^T\mathcal{S}(x(t),r)\mathcal{V}(x(t),r)}$, we could conclude from (\ref{eq:informative2}) that, for a constant deterministic payoff $r$, $\mathcal{S}(x(t),r)$ is constant only if $x(t)$ remains at an equilibrium point of the~EDM. 

\subsection{PDM $\delta$-antipassivity and weak $\delta$-antipassivity}

The following conditions will be used in the definition of $\delta$-antipassivity for a given PDM with input $u$, state $q$, and output~$p$:
\begin{subequations}
\label{eq:DAPassivePDM}
\begin{equation}
\label{eq:1stDAPAssivePDM}
\mathcal{L}(z,s) = 0 \Leftrightarrow \mathcal H(s, z) = \bar{\mathcal{F}}(z), \quad z \in \mathbb{X}, \ s \in \mathbb{R}^n
\end{equation}
\begin{multline}
\label{eq:2ndDAPAssivePDM}
\mathcal{L}(u(0),q(0)) - \mathcal{L}(u(t),q(t)) \geq \\ \int_0^t \left[ \dot{p}^T(\tau) \dot{u}(\tau) - \nu \dot{u}^T(\tau) \dot{u}(\tau) \right] \, \mathrm d\tau, \\ t \geq 0, \ q(0) \in \mathbb{R}^n, u \in \mathfrak{X} 
\end{multline}
\end{subequations} where $\nu$ is a nonnegative constant, $\bar{\mathcal{F}}$ is the stationary population game of the PDM, and ${\mathcal{L}:\mathbb{X} \times \mathbb{R}^n \rightarrow \mathbb{R}_{+}}$ is a map. 

\begin{definition} {\bf (PDM $\delta$-antipassivity)}
\label{def:EquilibriumStability} Given a PDM, we consider the following cases:
\begin{itemize}
	\item The PDM is said to be \underline{$\delta$-antipassive} if there is a continuously differentiable $\mathcal{L}$ for which (\ref{eq:DAPassivePDM}) is satisfied for $\nu=0$.
	\item The PDM is \underline{$\delta$-antipassive with deficit $\nu^* > 0$} if there is a continuously differentiable $\mathcal{L}$ for which (\ref{eq:DAPassivePDM}) is satisfied for every~${\nu > \nu^*}$.
\end{itemize} A map $\mathcal{L}$ that satisfies either case is referred to as a {$\delta$-antistorage} function.
\end{definition}

 Notice that there is an antisymmetry between~(\ref{eq:DeltaPassive}) and~(\ref{eq:2ndDAPAssivePDM}) that is obtained from changing signs of certain terms and swapping the output with the input. This correspondence could be further strengthened by viewing $\mathcal{L}$ as a $\delta$-antistorage function that would be the antisymmetric equivalent of $\mathcal{S}$.  An analogy similar to~(\ref{eq:InterpEDMdPAssive}) is done in~\cite{Fox2013Population-Game} to compare~$\delta$-antipassivity with standard notions of passivity. 

Given a PDM with input $u$, state $q$, and output $p$, the following inequality is central to characterizing a weaker notion of $\delta$-antipassivity:
\begin{multline}
\label{eq:WDAntipPDM}
 \mathcal{A} \big ( q(0), \|\dot{u}\| \big ) \geq \\ \int_0^t \left[ \dot{p}^T(\tau) \dot{u}(\tau) - \nu \dot{u}^T(\tau)\dot{u}(\tau) \right] \, \mathrm d \tau,  \\  t \geq 0, \ q(0) \in \mathbb{R}^n,  \ u \in \mathfrak{X}
\end{multline} where $\nu$ and $\mathcal{A}:\mathbb{R}^{n} \times \mathbb{R}_{+} \rightarrow \mathbb{R}_+$ are a nonnegative real constant and a map, respectively.

\begin{definition}
\label{def:WDAntipPDM}
{\bf (PDM Weak $\delta$-antipassivity)}  Given a PDM, we consider the following cases:
\begin{itemize}
	\item The PDM is said to be \underline{weak $\delta$-antipassive} if there is $\mathcal{A}$ for which (\ref{eq:WDAntipPDM}) is satisfied for $\nu=0$.
	\item The PDM is \underline{weak $\delta$-antipassive with deficit $\nu^* > 0$} if there is $\mathcal{A}$ for which (\ref{eq:WDAntipPDM}) is satisfied for every~${\nu > \nu^*}$.
\end{itemize} 
\end{definition}

Unlike $\delta$-antipassivity, which requires the existence of a continuously differentiable $\delta$-antistorage function $\mathcal{L}$, weak $\delta$-antipassivity only requires the existence of a map $\mathcal{A}$ satisfying~(\ref{eq:WDAntipPDM}). Although well-known results~\cite{Moylan1974Implications-of,Hill1976The-stability-o,Willems1972Dissipative-dyn} indicate that a so-called ``available" storage function can be constructed when (\ref{eq:WDAntipPDM}) is satisfied, there are no guarantees that it will be continuously differentiable or satisfy~(\ref{eq:1stDAPAssivePDM}). The following remark outlines an argument to establish that $\delta$-antipassivity indeed implies weak $\delta$-antipassivity.

\begin{remark}{\bf ($\delta$-antipassivity implies weak $\delta$-antipassivity)} Given a PDM, the following holds:
\begin{itemize}
\item If the PDM is $\delta$-antipassive then it is weak $\delta$-antipassive and we can select $\mathcal{A}$ as:
\begin{align}
\mathcal{A}(q(0),\cdot)= &\max_{z \in \mathbb{X}} \mathcal{L}(z, q(0))
\end{align} where $\mathcal{L}$ is the $\delta$-antistorage function of the PDM.
\item This choice for $\mathcal{A}$ also allows us to conclude that if the PDM is $\delta$-antipassive with deficit $\nu^*$ then it is also weak $\delta$-antipassive with deficit $\nu^*$.
\end{itemize} 
\end{remark}

As we will see later in \S\ref{sec:NashStationaryEDMGlobalConvergence} and~\S\ref{sec:PBR} where we study convergence of the mean population state to $\mathbb{NE}(\bar{\mathcal{F}})$, as well as perturbed versions of it, $\delta$-antipassivity and weak $\delta$-antipassivity are prerequisites for global asymptotic stability and global attractiveness, respectively. This confirms, as one should expect, that because the $\delta$-antipassivity condition is stricter than its weak version, it leads to stronger stability guarantees.

\subsubsection{Contractivity and $\delta$-antipassivity for memoryless PDM}
\label{sec:MemorylessPDM}

 We proceed to stating in the following proposition, and subsequently proving that a memoryless PDM is $\delta$-antipassive if and only if its stationary population game is contractive in the sense of Definition~\ref{def:StableGame}. Alternatively, one may assert that $\delta$-antipassivity gets to be contractivity in the particular case when the PDM is a population game.

\begin{proposition} 
\label{prop:Memoryless} Let $\mathcal{F}: \mathbb{X} \rightarrow \mathbb{R}^n$ be a given continuously differentiable map satisfying the following inequality:
\begin{equation}
\label{eq:PassivityForMemoryless}
\tilde{z}^T \mathcal{DF}(z) \tilde{z} \leq \nu^* \tilde{z}^T \tilde{z}, \quad z \in \mathbb{X}, \ \tilde{z} \in \mathbb{TX}
\end{equation} Here, $\nu^*$ is the least nonnegative real constant for which the inequality holds. The following holds for the memoryless PDM corresponding to the population game $\mathcal{F}$.
\begin{itemize}
	\item If $\nu^*$ is zero then the PDM is $\delta$-antipassive.
	\item If $\nu^*$ is positive then the PDM is $\delta$-antipassive with deficit~$\nu^*$.
\end{itemize}
\end{proposition}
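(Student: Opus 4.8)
The plan is to produce an explicit $\delta$-antistorage function for the memoryless PDM and to show that, for that choice, the integral inequality (\ref{eq:2ndDAPAssivePDM}) collapses exactly to the pointwise quadratic bound (\ref{eq:PassivityForMemoryless}). The natural candidate is the trivial map $\mathcal{L} \equiv 0$.

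First I would invoke the construction following Definition~\ref{def:MemorylessPDM}: the memoryless PDM associated with $\mathcal{F}$ has $\mathcal{H}(s,z) = \mathcal{F}(z)$ and stationary population game $\bar{\mathcal{F}} = \mathcal{F}$, so $\mathcal{H}(s,z) = \bar{\mathcal{F}}(z)$ holds for \emph{every} $(s,z) \in \mathbb{R}^n \times \mathbb{X}$. Hence the ``Nash-gap'' characterization (\ref{eq:1stDAPAssivePDM}) can only be met by $\mathcal{L} \equiv 0$, which is continuously differentiable and nonnegative as Definition~\ref{def:EquilibriumStability} demands. With $\mathcal{L} \equiv 0$ the left-hand side of (\ref{eq:2ndDAPAssivePDM}) vanishes identically, so it remains to establish
$$\int_0^t \big[ \dot{p}^T(\tau)\dot{u}(\tau) - \nu\,\dot{u}^T(\tau)\dot{u}(\tau)\big]\,\mathrm{d}\tau \le 0$$
for all $u \in \mathfrak{X}$, all $q(0) \in \mathbb{R}^n$ and all $t \ge 0$ — with $\nu = 0$ for the first bullet, and with arbitrary $\nu > \nu^*$ for the second.

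The decisive computation is the following. Because the PDM is memoryless, $p(\tau) = \mathcal{F}(u(\tau))$, whence $\dot{p}(\tau) = \mathcal{DF}(u(\tau))\dot{u}(\tau)$; since $\dot{p}^T(\tau)\dot{u}(\tau)$ is a scalar equal to its transpose, $\dot{p}^T(\tau)\dot{u}(\tau) = \dot{u}^T(\tau)\mathcal{DF}(u(\tau))\dot{u}(\tau)$. Moreover $u \in \mathfrak{X}$ forces $u(\tau) \in \mathbb{X}$ for every $\tau$, so $\sum_i \dot{u}_i(\tau) = 0$, i.e. $\dot{u}(\tau) \in \mathbb{TX}$. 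Applying (\ref{eq:PassivityForMemoryless}) with $z = u(\tau)$ and $\tilde{z} = \dot{u}(\tau)$ gives $\dot{p}^T(\tau)\dot{u}(\tau) \le \nu^*\,\dot{u}^T(\tau)\dot{u}(\tau)$ for each $\tau$, so for any $\nu \ge \nu^*$ the integrand above is bounded by $(\nu^* - \nu)\,\dot{u}^T(\tau)\dot{u}(\tau) \le 0$ and the integral is nonpositive. Taking $\nu = 0$ when $\nu^* = 0$ yields $\delta$-antipassivity; taking any $\nu > \nu^*$ when $\nu^* > 0$ yields $\delta$-antipassivity with deficit $\nu^*$, which proves both asserted implications.

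To complete the ``if and only if'' announced before the proposition, I would also argue the converse — that a $\delta$-antipassive memoryless PDM forces $\nu^* = 0$, hence $\mathcal{F}$ contractive in the sense of Definition~\ref{def:StableGame}. For $z$ in the interior of $\mathbb{X}$ and any $\tilde{z} \in \mathbb{TX}$, the affine input $u(\tau) = z + \tau\tilde{z}$ is admissible for small $\tau$; since the left side of (\ref{eq:2ndDAPAssivePDM}) with $\mathcal{L} \equiv 0$ and $\nu = 0$ must be nonpositive for all $t$, differentiating at $\tau = 0$ gives $\tilde{z}^T\mathcal{DF}(z)\tilde{z} \le 0$, and continuity of $z \mapsto \tilde{z}^T\mathcal{DF}(z)\tilde{z}$ extends this to $\mathrm{bd}(\mathbb{X})$; integrating along segments via $(z-\bar z)^T(\mathcal{F}(z)-\mathcal{F}(\bar z)) = \int_0^1 (z-\bar z)^T\mathcal{DF}(\bar z + t(z-\bar z))(z-\bar z)\,\mathrm{d}t$ then delivers $(z-\bar z)^T(\mathcal{F}(z)-\mathcal{F}(\bar z)) \le 0$. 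I expect the forward direction to be essentially a substitution; the only mildly delicate points are the sign bookkeeping imposed by the antisymmetry between (\ref{eq:DeltaPassive}) and (\ref{eq:2ndDAPAssivePDM}) and, for the converse, handling boundary profiles by the interior-approximation/continuity argument just sketched.
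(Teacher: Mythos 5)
Your proposal is correct and takes essentially the same route as the paper's proof: pick $\mathcal{L}\equiv 0$, note that $\mathcal{H}(s,z)=\mathcal{F}(z)=\bar{\mathcal{F}}(z)$ so (\ref{eq:1stDAPAssivePDM}) holds trivially, and use $\dot{p}^T(\tau)\dot{u}(\tau)=\dot{u}^T(\tau)\mathcal{DF}(u(\tau))\dot{u}(\tau)$ together with $\dot{u}(\tau)\in\mathbb{TX}$ and (\ref{eq:PassivityForMemoryless}) to obtain (\ref{eq:2ndDAPAssivePDM}) for $\nu=0$, respectively for every $\nu>\nu^*$. The converse you sketch is extra material not asserted by the proposition (and not proved by the paper at this point), so its minor admissibility caveats about affine inputs do not affect the correctness of your argument for the two stated bullets.
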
 

\begin{proof} Recall that in this case $\bar{\mathcal{F}} = \mathcal{F}$ and choose $\mathcal{L}(z,s) = 0$. Clearly, since $\mathcal H(s,z) = \mathcal F(z)$, Assumption \ref{assumption:stationary_game} and \eqref{eq:1stDAPAssivePDM} hold. Also the fact that $\dot{p}^T(t) \dot{u}(t) = \dot{u}^T(t) \mathcal{DF}(u(t)) \dot{u}(t)$ and (\ref{eq:PassivityForMemoryless}) imply that~(\ref{eq:2ndDAPAssivePDM}) holds.
\end{proof}




\subsection{Main Supporting Lemma and Outline of Main Convergence Results}
\label{subsec:Main Lemma}

We proceed with stating a lemma that ascertains conditions on the EDM and PDM under which key stability properties for the mean closed loop are guaranteed. The lemma will be used as an important building block of the stability results in \S\ref{sec:NashStationaryEDMGlobalConvergence} and~\S\ref{sec:PBR} where we analyze well-known EDM classes. 
\noindent \rule{\columnwidth}{1pt}
\begin{lemma} 
\label{lem:MainLemma}
{\bf (Main Supporting Lemma)} \\  Let a PDM and an EDM be given. We consider the following two cases.
\begin{itemize}
	\item {\bf (Case I)} The PDM is weak $\delta$-antipassive ($\nu=0$) and the EDM is {$\delta$-passive} ($\eta=0$) with respect to an informative $\delta$-storage function~$\mathcal{S}$.
	\item {\bf (Case II)} The PDM is weak $\delta$-antipassive with deficit $\nu^*>0$ and the EDM is $\delta$-passive with surplus $\eta^* > \nu^*$ with respect to an informative $\delta$-storage function~$\mathcal{S}$.
\end{itemize}
If either Case I or Case II is true then the following holds:
\begin{equation} 
\label{eq:MainLemma}
\lim_{t \rightarrow \infty} \mathcal{S}(x(t),p(t)) = 0, \quad (x(0),q(0)) \in \mathbb{X} \times \mathbb{R}^n
\end{equation}
where the trajectory $(x,p)$ is determined from the unique solution of the initial value problem for~(\ref{eq:ClosedLoop}).
\end{lemma}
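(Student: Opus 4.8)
The plan is to specialize the two passivity inequalities to the closed loop and then run a LaSalle-type argument on the $\omega$-limit set of $t\mapsto(x(t),p(t))$. First I would record the regularity facts that Assumption~\ref{assump:BoundedPDM}, Remark~\ref{rem:bounded} and Proposition~\ref{prop:uniquesoln} supply for the unique solution of~(\ref{eq:ClosedLoop}): the signals $x$, $p$, $\dot x=\mathcal{V}(x,p)$, $\dot q=\mathcal{G}(q,x)$ and $\dot p$ are all bounded, $(x(t),p(t))$ stays in a fixed compact subset $K$ of $\mathbb{X}\times\mathbb{R}^n$, and — because $\mathcal{V}$, $\mathcal{G}$ are Lipschitz and $\mathcal{H}$, $\mathcal{S}$ are $C^1$ — each of these signals, as well as $\nabla_z\mathcal{S}(x,p)$, is uniformly continuous in $t$. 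I would then specialize~(\ref{eq:DeltaPassive}) and~(\ref{eq:WDAntipPDM}) to $w=p$, $u=x$ (so $\dot w=\dot p$, $\dot u=\dot x$) and add them.

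For Case~II, pick $\nu,\eta$ with $\nu^*<\nu<\eta\le\eta^*$ for which~(\ref{eq:DeltaPassive}) holds with the given informative $\mathcal{S}$ and this $\eta$, and~(\ref{eq:WDAntipPDM}) holds with this $\nu$ (possible since $\eta^*>\nu^*$); adding them with $t_0=0$ gives
\[
  \mathcal{S}(x(t),p(t)) + (\eta-\nu)\!\int_0^t \dot x^T\!(\tau)\dot x(\tau)\,\mathrm d\tau \;\le\; \mathcal{S}(x(0),p(0)) + \mathcal{A}\big(q(0),\|\dot x\|\big),\quad t\ge 0 .
\]
Since $\mathcal{S}\ge 0$ and $\eta-\nu>0$, this yields $\dot x\in\mathcal{L}_2$; as $\dot x(t)=\mathcal{V}(x(t),p(t))$ is uniformly continuous and square-integrable, a Barbalat-type argument gives $\mathcal{V}(x(t),p(t))\to 0$. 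Then at every point $(z^*,r^*)$ of the (nonempty, compact) $\omega$-limit set $\Omega\subseteq K$ we get $\mathcal{V}(z^*,r^*)=0$ by continuity, hence $\mathcal{S}(z^*,r^*)=0$ by~(\ref{eq:informative1}); continuity of $\mathcal{S}$ on $K$ together with $\operatorname{dist}\big((x(t),p(t)),\Omega\big)\to0$ yields~(\ref{eq:MainLemma}).

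For Case~I ($\eta=\nu=0$) the same sum only gives $\sup_t\mathcal{S}(x(t),p(t))<\infty$, so a finer dissipation term must be found. Here I would first observe, using that~(\ref{eq:DeltaPassive}) holds for \emph{every} input $w\in\mathfrak{P}$ and testing it with smooth inputs of arbitrary value and derivative at a chosen time, that any $C^1$ $\delta$-storage function must satisfy $\nabla_r\mathcal{S}(z,r)=\mathcal{V}(z,r)$ and $\nabla_z^T\mathcal{S}(z,r)\mathcal{V}(z,r)\le 0$ on $\mathbb{X}\times\mathbb{R}^n$. Consequently, along the closed loop $\frac{\mathrm d}{\mathrm dt}\mathcal{S}(x(t),p(t)) = \nabla_z^T\mathcal{S}(x,p)\mathcal{V}(x,p) + \dot x^T\dot p$ with $\nabla_z^T\mathcal{S}(x,p)\mathcal{V}(x,p)\le 0$; integrating, and using~(\ref{eq:WDAntipPDM}) with $\nu=0$ to bound $\int_0^t\dot x^T\dot p\,\mathrm d\tau$ above together with $\mathcal{S}\ge 0$, shows $\int_0^\infty\big(-\nabla_z^T\mathcal{S}(x(\tau),p(\tau))\mathcal{V}(x(\tau),p(\tau))\big)\,\mathrm d\tau<\infty$. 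This integrand is nonpositive and uniformly continuous, so it tends to $0$; then at each $\omega$-limit point $(z^*,r^*)$ we have $\nabla_z^T\mathcal{S}(z^*,r^*)\mathcal{V}(z^*,r^*)=0$, whence $\mathcal{V}(z^*,r^*)=0$ by~(\ref{eq:informative2}) and $\mathcal{S}(z^*,r^*)=0$ by~(\ref{eq:informative1}); the compactness/continuity step of Case~II then concludes.

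The main obstacle is Case~I: with no surplus–deficit gap there is no coercive $\mathcal{L}_2$ bound on $\dot x$, so the argument has to extract the residual dissipation $-\nabla_z^T\mathcal{S}\,\mathcal{V}\ge 0$ from~(\ref{eq:DeltaPassive}) via the structural identity $\nabla_r\mathcal{S}=\mathcal{V}$, and it then genuinely needs the second, LaSalle-type informativeness condition~(\ref{eq:informative2}) rather than just~(\ref{eq:informative1}). A secondary technical point is verifying all the uniform-continuity hypotheses the Barbalat steps rely on — in particular that $\dot p$ and $\nabla_z\mathcal{S}(x,p)$ are uniformly continuous along the trajectory — which rests precisely on the $C^1$/Lipschitz regularity in Definitions~\ref{def:PDM}, \ref{def:edm} and the boundedness from Assumption~\ref{assump:BoundedPDM}.
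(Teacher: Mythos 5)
Your proposal is correct, but it reaches the conclusion by a genuinely different route from the paper. The paper first isolates the differential characterization of $\delta$-passivity ($\nabla_r\mathcal S=\mathcal V$ and $\nabla_z^T\mathcal S\,\mathcal V\le-\eta\|\mathcal V\|^2$, its Lemma~\ref{lemma:passivity_edm}, proved via the Hill--Moylan characterization) and then treats Cases I and II uniformly through the combined dissipation term $\nabla_z^T\mathcal S\,\mathcal V+\nu^*\|\mathcal V\|^2$: it shows the set $\mathbb O_\epsilon$ of times where $\mathcal S>\epsilon/2$ has finite Lebesgue measure (a compactness argument gives a uniform negative bound $-\delta_1$ on the dissipation there), decomposes $\mathbb O_\epsilon$ into intervals whose lengths shrink, and uses the bounded growth rate $\dot p^T\mathcal V\le\delta_2$ to rule out excursions of $\mathcal S$ above $\epsilon$ --- deliberately avoiding any Barbalat-type step, consistent with the reservation voiced in the problem-statement section. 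You instead split the cases: in Case~II you get $\dot x\in\mathcal L_2$ directly from the two integral inequalities and conclude via Barbalat plus continuity of $\mathcal S$ on the compact closure of the trajectory and~(\ref{eq:informative1}) only; in Case~I you rederive the differential characterization (the content of the paper's Lemma~\ref{lemma:passivity_edm}), show the residual dissipation $-\nabla_z^T\mathcal S\,\mathcal V\ge0$ is integrable, and apply Barbalat again together with~(\ref{eq:informative2})--(\ref{eq:informative1}) and the same $\omega$-limit/uniform-continuity step (note you never need invariance of the $\omega$-limit set, so no genuine LaSalle machinery is invoked). Your justification of the uniform-continuity hypotheses is the right one and shows the paper's concern about second derivatives of $x$ does not apply here: $\dot x=\mathcal V(x,p)$ and the dissipation rate are Lipschitz in $t$ because $\mathcal V$, $\nabla_z\mathcal S$ are continuous (Lipschitz/$C^1$) and $\dot x$, $\dot p$ are bounded by Remark~\ref{rem:bounded}. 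What each approach buys: yours is shorter and more modular (Case~II needs neither the differential characterization nor~(\ref{eq:informative2})), while the paper's excursion argument handles both cases in one pass and dispenses with Barbalat-type lemmas altogether. Two cosmetic points: the integrand $-\nabla_z^T\mathcal S\,\mathcal V$ in your Case~I is nonnegative (you wrote ``nonpositive''), and your selection of $\eta\in(\nu^*,\eta^*]$ certified by the given informative $\mathcal S$ rests on the same reading of Definition~\ref{def:passivity} (the supremum $\eta^*$ being attained, or nearly attained, by that $\mathcal S$) that the paper's own proof uses, so it is not a gap relative to the paper.
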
 
\noindent \rule{\columnwidth}{1pt}

 A proof of Lemma \ref{lem:MainLemma} is given in Appendix \ref{appendix_a}. This lemma will enable us to use $\mathcal{S}$ to proceed in a manner that is analogous to how Lyapunov functions were used in~\cite{Hofbauer2009Stable-games-an} to establish convergence of the mean population state to meaningful equilibria of contractive population games, for various classes of EDM. 

 \begin{remark}{\bf (Trade-off in Case~II)} Notice that Case~II of the lemma allows for a PDM that is weak $\delta$-antipassive with deficit $\nu^*>0$ at the expense of requiring that the EDM is $\delta$-passive with surplus $\eta^* > \nu^*$. That is to say that a less stringent $\delta$-antipassivity requirement on the PDM can be counterbalanced by an appropriately stricter $\delta$-passivity condition on the EDM.
 \end{remark}

\subsection{Comparison with related notions of passivity}
\label{sec:PassivityComparison}
In dynamical system theory, there are other notions of passivity, namely, incremental passivity \cite{Pavlov2008Incremental-pas}, differential passivity \cite{Forni2014A-differential-,Forni2013On-differential} and equilibrium-independent passivity \cite{Hines2011Equilibrium-ind,Arcak2016Networks-of-Dis}. For a certain class of dynamical system models, e.g., linear system models, incremental passivity and differential passivity are equivalent to $\delta$-passivity; however, the equivalence would not hold for nonlinear system models such as EDM \eqref{PopulationDynamic} considered in this paper. On the other hand, as we briefly explain below, the qualification for equilibrium-independent passivity is basically different from that of $\delta$-passivity even for linear system models.



In what follows,
we compare these passivity notions with $\delta$-passivity in terms of their roles in establishing the stability of dynamical systems, and discuss the motivation for our selection of $\delta$-passivity.
Essentially, incremental passivity and differential passivity are used to analyze the so-called \textit{incremental stability} defined as the pairwise contraction of the state trajectories of a dynamical system -- a concept relevant in synchronization and consensus problems. On the other hand, inextricably related to Lyapunov stability, $\delta$-passivity is used to ascertain the convergence towards certain equilibria, which in our context would typically be Nash or perturbed equilibria of the stationary population game of a PDM. Hence, in comparison with incremental passivity and differential passivity, $\delta$-passivity is more adequate for the stability analysis provided in this article.

In some cases, equilibrium-independent passivity could alternatively be applied to an EDM \eqref{PopulationDynamic} by considering, as we do with $\delta$-passivity,  that $w(t)$ and $x(t)$ are the input and output, respectively. Notably, the work in~\cite{8619157} used equilibrium-independent passivity to investigate the convergence of the mixed strategy for a finite population and the mean population state for an infinite population towards a perturbed equilibrium in a game defined by a higher-order learning rule and negative-monotone payoff.

According to the definition in~\cite{Hines2011Equilibrium-ind}, equilibrium-independent passivity would hold in our context if for each $w^\ast$ in $\mathbb R^n$, there would be a continuously differentiable function $\mathcal S_{w^\ast}: \mathbb X \times \mathbb R^n \to \mathbb R_+$ for which the following condition is satisfied:
  \begin{multline} \label{eq:eip}
    \mathcal S_{w^\ast} (x(t), w(t)) - \mathcal S_{w^\ast} (x(t_0), w(t_0)) \leq \\ \int_{t_0}^t \left( x(\tau) - x^\ast \right)^T \left( w(\tau) - w^\ast \right) \,\mathrm d\tau, \\ t \geq t_0, \ t,t_0 \in \mathbb{T}, \ x(t_0) \in \mathbb{X}, \ w \in \mathfrak{P}  
  \end{multline}
  where $x^\ast$ would be a mean population state satisfying ${\mathcal V(x^\ast, w^\ast) = 0}$. Furthermore, the definition of equilibrium-independent passivity in~\cite{Hines2011Equilibrium-ind} would require that, for every $w^\ast$ in $\mathbb R^n$, there is a continuously differentiable $\mathcal S_{w^\ast}$ satisfying \eqref{eq:eip} for which $x^\ast$ is the unique state satisfying $\mathcal V(x^\ast, w^\ast) = 0$. However, as can be inferred by analyzing Examples~\ref{ex:bnn} and~\ref{ex:smith} in \S\ref{sec:IEPT} and \S\ref{sec:IPC} describing the BNN and Smith EDM, respectively, this uniqueness requirement is not satisfied by important EDM classes, which further justifies our adoption of $\delta$-passivity to develop the approach reported in this article. These examples will also not satisfy the assumptions of the modified version of equilibrium-independent passivity discussed in~\cite{Arcak2016Networks-of-Dis}. In particular, in these examples, if the pair $(x^\ast,w^\ast)$ satisfies ${\mathcal V(x^\ast, w^\ast) = 0}$ then so will the pair $(x^\ast,\chi w^\ast)$ for any positive scalar $\chi$, which violates a key uniqueness requirement of~\cite[\S3.1]{Arcak2016Networks-of-Dis}.
  
  It is also interesting to note that when a memoryless PDM is specified by a contractive game then not only it will be $\delta$-antipassive, as guaranteed by Proposition~\ref{prop:Memoryless}, but it will also satisfy the antisymmetric of the condition required for incremental passivity.

\subsection{$\delta$-dissipativity and Weighted-Contractivity}

The concepts of $\delta$-passivity, and $\delta$-antipassivity, are generalized in~\cite{Arcak2020Dissipativity-T} to notions based on  $\delta$-dissipativity inequalities that afford additional flexibility by allowing weighting matrices to be introduced in~(\ref{eq:DeltaPassive}) and~(\ref{eq:2ndDAPAssivePDM}). An analysis of stability of $\mathbb{NE}(\mathcal F)$ is then developed in~\cite{Arcak2020Dissipativity-T} that is based on this article and extends some of its results\footnote{We refer the reader to~\cite{Arcak2020Dissipativity-T} for a thorough comparison}. When the PDM is memoryless and specified by a population game, the conditions for stability in~\cite{Arcak2020Dissipativity-T} remain valid for {\it weighted-contractive} games, of which contractive ones are a particular case.  This is especially beneficial in multipopulation settings in which the contractive properties of the game vary from one population to another, such as in~\cite{MehHor19,LazCooPed17}. 

\section{Nash Stationarity and Convergence to $\mathbb{NE}(\bar{\mathcal{F}})$}
\label{sec:NashStationaryEDMGlobalConvergence}

We start by defining a key property called {\it Nash Stationarity}, which will allow us to associate equilibria of an EDM with the set of best responses to a deterministic payoff vector.

\begin{definition} {\bf (Nash Stationarity)} A given EDM specified by $\mathcal{V}:\mathbb{X} \times \mathbb{R}^n \rightarrow \mathbb{R}^n$ satisfies Nash stationarity if the following equivalence holds:
\begin{equation}
\label{eq:NashStationarity}
\mathcal{V}(z,r) = 0 \Leftrightarrow z \in \arg \max_{\bar{z} \in \mathbb{X}} \bar{z}^T r, \quad z \in \mathbb{X}, \ r \in \mathbb{R}^n
\end{equation}
\end{definition}

In the framework of \cite{Hofbauer2009Stable-games-an}, Nash stationarity of the mean dynamics is crucial to establishing that the mean population state converges to the set of Nash equilibria of an underlying contractive population game. Not surprisingly, it will also be essential in our analysis, as is evidenced by the following lemma. Fortunately, as we discuss in Remarks~\ref{rem:EPTNashStationary} and~\ref{rem:IPCNashStation}, the EDM classes considered throughout this section satisfy Nash stationarity.

\noindent \rule{\columnwidth}{1pt}
\begin{lemma} \label{lem:NashStationMainLemma} Consider that a mean closed loop model is formed by a given PDM and an EDM that is Nash stationary, $\delta$-passive, and has an informative $\delta$-storage function $\mathcal{S}$. If the PDM is weak $\delta$-antipassive then $\mathbb{NE}(\bar{\mathcal{F}})$ is globally attractive. If the PDM is $\delta$-antipassive then $\mathbb{NE}(\bar{\mathcal{F}})$ is globally asymptotically stable.
\end{lemma}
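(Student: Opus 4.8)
\emph{Proof plan.} The plan is to reduce the statement to Lemma~\ref{lem:MainLemma} and then run LaSalle-type compactness arguments. The observation that does most of the work is the equivalence, valid for all $z^\ast \in \mathbb{X}$ and $r^\ast \in \mathbb{R}^n$,
\begin{equation*}
  \mathcal{S}(z^\ast,r^\ast)=0 \quad \Longleftrightarrow \quad \mathcal{V}(z^\ast,r^\ast)=0 .
\end{equation*}
The implication ``$\Leftarrow$'' is precisely \eqref{eq:informative1}. For ``$\Rightarrow$'', I would drive the EDM with the constant input $w\equiv r^\ast$ (a legitimate element of $\mathfrak{P}$) starting from $x(0)=z^\ast$; the constraint \eqref{ConeConstraint} renders $\mathbb{X}$ forward invariant, so the solution $x$ stays in $\mathbb{X}$, and $\delta$-passivity \eqref{eq:DeltaPassive} with $\eta=0$ and $\dot{w}\equiv 0$ gives $0 \le \mathcal{S}(x(t),r^\ast) \le \mathcal{S}(x(0),r^\ast)=0$; hence $\mathcal{S}(x(t),r^\ast)\equiv 0$ and $\frac{\mathrm d}{\mathrm dt}\mathcal{S}(x(t),r^\ast)=\nabla_z^T\mathcal{S}(x(t),r^\ast)\mathcal{V}(x(t),r^\ast)\equiv 0$, so evaluating at $t=0$ and invoking \eqref{eq:informative2} yields $\mathcal{V}(z^\ast,r^\ast)=0$.

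\emph{Global attractiveness.} Under the hypotheses of the first statement, Case~I of Lemma~\ref{lem:MainLemma} applies and gives $\mathcal{S}\bigl(x(t),p(t)\bigr)\to 0$ along the unique closed-loop trajectory. Since $x(t)$ remains in the compact set $\mathbb{X}$ and $p$ is bounded (Proposition~\ref{prop:uniquesoln}), a subsequence argument using continuity of $\mathcal{S}$ and $\mathcal{V}$ together with the equivalence above forces $\dot{x}(t)=\mathcal{V}\bigl(x(t),p(t)\bigr)\to 0$. Assumption~\ref{assumption:stationary_game} then yields $\bigl\|p(t)-\bar{\mathcal{F}}\bigl(x(t)\bigr)\bigr\|\to 0$, which is the second limit required by Definition~\ref{def:stability}; combining it with Lipschitz continuity of $\mathcal{V}$ gives $\mathcal{V}\bigl(x(t),\bar{\mathcal{F}}(x(t))\bigr)\to 0$. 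By Nash stationarity and Definition~\ref{def:NashForPDM}, the zero set of the continuous map $z\mapsto\mathcal{V}\bigl(z,\bar{\mathcal{F}}(z)\bigr)$ on $\mathbb{X}$ is exactly $\mathbb{NE}(\bar{\mathcal{F}})$, so a further compactness argument (again using compactness of $\mathbb{X}$ and closedness of $\mathbb{NE}(\bar{\mathcal{F}})$) yields $\inf_{z\in\mathbb{NE}(\bar{\mathcal{F}})}\|x(t)-z\|\to 0$, completing the proof of global attractiveness.

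\emph{Global asymptotic stability.} If the PDM is $\delta$-antipassive then it is, in particular, weak $\delta$-antipassive, so attractiveness is already in hand and only Lyapunov stability of the set $\mathbb{A}$ of Definition~\ref{def:stability} remains. I would use the composite candidate
\begin{equation*}
  U(q,x)\;\overset{\mathrm{def}}{=}\;\mathcal{S}\bigl(x,\mathcal{H}(q,x)\bigr)+\mathcal{L}(x,q),
\end{equation*}
which is continuously differentiable and nonnegative since $\mathcal{S}$ and the $\delta$-antistorage function $\mathcal{L}$ are. Adding the EDM inequality \eqref{eq:DeltaPassive} (with $\eta=0$, $w=p$) to the PDM inequality \eqref{eq:2ndDAPAssivePDM} (with $\nu=0$, $u=x$, which holds from any initial time by time-invariance) makes the cross terms $\dot{x}^T\dot{p}$ cancel, so $t\mapsto U\bigl(q(t),x(t)\bigr)$ is nonincreasing. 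Moreover, by \eqref{eq:1stDAPAssivePDM} and the equivalence above, $U(q,x)=0$ if and only if $\mathcal{H}(q,x)=\bar{\mathcal{F}}(x)$ and $\mathcal{V}\bigl(x,\bar{\mathcal{F}}(x)\bigr)=0$, i.e.\ exactly when $(q,x)\in\mathbb{A}$. Since Assumption~\ref{assump:BoundedPDM} and $x\in\mathbb{X}$ confine every closed-loop trajectory to a bounded set, $U$ functions as a Lyapunov function for $\mathbb{A}$ and the standard sublevel-set construction produces, for each open $\mathbb{O}\supseteq\mathbb{A}$, the open set $\mathbb{O}'\supseteq\mathbb{A}$ required by Definition~\ref{def:stability}.

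\emph{Main obstacle.} The crux is the equivalence $\mathcal{S}=0\Leftrightarrow\mathcal{V}=0$ together with the passage from the storage-function limit $\mathcal{S}(x(t),p(t))\to 0$ to genuine pointwise convergence of $\dot{x}$ and of $x$ to $\mathbb{NE}(\bar{\mathcal{F}})$; this is where informativeness of $\mathcal{S}$, forward invariance of $\mathbb{X}$, and the legitimacy of the derivative test at boundary points of $\mathbb{X}$ all have to be handled carefully. For the Lyapunov-stability half, the only real subtlety is the noncompactness of the $q$-coordinate, which is absorbed by Assumption~\ref{assump:BoundedPDM} and by the structural requirement on $\mathbb{A}$ that is built into Assumption~\ref{assumption:stationary_game}.
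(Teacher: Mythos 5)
Your proposal is correct and follows the paper's route in all essentials: reduce to Lemma~\ref{lem:MainLemma}, pass from $\mathcal{S}(x(t),p(t))\to 0$ to $\dot{x}\to 0$ by compactness, invoke Assumption~\ref{assumption:stationary_game} to get $\|p(t)-\bar{\mathcal{F}}(x(t))\|\to 0$, use Nash stationarity to identify the relevant zero set with $\mathbb{NE}(\bar{\mathcal{F}})$, and prove Lyapunov stability with the composite function $\mathcal{S}\bigl(x,\mathcal{H}(q,x)\bigr)+\mathcal{L}(x,q)$, whose monotonicity comes from adding the two passivity inequalities so that $\dot{x}^T\dot{p}$ cancels, and whose zero set is exactly $\mathbb{A}$. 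The one genuinely different ingredient is how you obtain $\mathcal{S}=0\Leftrightarrow\mathcal{V}=0$: the paper routes this through Lemma~\ref{lemma:ns_minimizer}, which proves $\mathcal{S}^{-1}(0)\subseteq\mathbb{S}$ via the characterization $\nabla_r\mathcal{S}=\mathcal{V}$ of Lemma~\ref{lemma:passivity_edm} and the reverse inclusion by an auxiliary-game/LaSalle construction (needed there because that lemma does not assume informativeness), whereas you take the reverse inclusion directly from \eqref{eq:informative1} and prove the forward one with a short constant-payoff trajectory argument plus \eqref{eq:informative2}. Since informativeness is a hypothesis of the present lemma, your shortcut is legitimate and yields a more self-contained argument; the paper's lemma is stronger in that it does not presuppose informativeness.

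The only place where your plan says less than is actually needed is the final sublevel-set step. When the set in Assumption~\ref{assumption:stationary_game} is the \emph{entire} $\mathbb{R}^n\times\mathbb{X}$, the set $\mathbb{A}$ equals $\mathbb{R}^n\times\mathbb{NE}(\bar{\mathcal{F}})$ and is unbounded, so the ``standard'' construction fails verbatim: the infimum of $U$ over $\mathrm{bd}(\mathbb{O})\cap(\mathbb{R}^n\times\mathbb{X})$ need not be positive, and Assumption~\ref{assump:BoundedPDM} cannot rescue it because its bound on $q$ depends on $q(0)$, while $\mathbb{O}'$ must contain points of $\mathbb{A}$ with arbitrarily large $q$-coordinate. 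The paper closes this by an explicit two-case analysis: in the compact case the argument is the one you describe, and in the entire-space case \eqref{eq:1stDAPAssivePDM} forces $\mathcal{L}\equiv 0$ and $\mathcal{H}(q,x)=\bar{\mathcal{F}}(x)$ everywhere, so $U$ reduces to $\mathcal{S}\bigl(x,\bar{\mathcal{F}}(x)\bigr)$, a function of $x$ alone, and the sublevel-set argument is carried out in $\mathbb{X}$ only. You point at the right assumptions, but you should carry out this case split to make the Lyapunov-stability half complete.
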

\noindent \rule{\columnwidth}{1pt}

A proof of Lemma \ref{lem:NashStationMainLemma} is given in Appendix \ref{appendix_a}.
Notice that Lemma~\ref{lem:NashStationMainLemma} is restricted to the case in which the EDM is $\delta$-passive and the PDM is either weak $\delta$-antipassive or $\delta$-antipassive. This stands in contrast with Case~II of Lemma~\ref{lem:MainLemma}, which allows a PDM to be weak $\delta$-antipassive with positive deficit $\nu^*$ at the expense of restricting the EDM to be $\delta$-passive with surplus $\eta^* > \nu^*$. This level of generality is not viable for the EPT and IPC EDM defined below because, as we show in \cite[Corollary~IV.3]{Park2018Passivity-and-e}, they are not strictly output $\delta$-passive.


\subsection{Integrable Excess Payoff Target (EPT) EDM}
\label{sec:IEPT}

We start by defining Excess Payoff Target (EPT) EDM by specifying the properties that the associated protocol must satisfy. 

\begin{definition} 
\label{def:EPTEDM}
{\bf (Excess Payoff Target (EPT) EDM)} 
\\A given protocol $\mathcal{T}$ yields an EPT EDM if it can be written as:
\begin{equation}
\begin{array}{rl}
\mathcal{T}_{ij}(r,z) & = \mathcal{T}^{\text{EPT}}_j(\hat{r}) \\ 
\hat{r}_i & \overset{\mathrm{def}}{=} r_i - \frac{1}{m} \sum_{i=1}^n r_i z_i
\end{array},   \quad r \in \mathbb{R}^n, \ z \in \mathbb{X}
\end{equation} where $\mathcal{T}^{\text{EPT}}:\mathbb{R}^n_* \rightarrow \mathbb{R}^n_{+}$ is a Lipschitz continuous map, $\hat{r}$ is the vector of \underline{excess payoff} relative to the population average and $\mathbb{R}^n_{*} = \mathbb{R}^n - \mathrm{int}(\mathbb{R}^n_{-})$ is the set of possible excess payoff vectors. In addition, $\mathcal{T}^{\text{EPT}}$ must satisfy the following {\it acuteness} condition:
\begin{equation}
\label{eq:EPTAcuteness}
\hat{r}^T \mathcal{T}^{\text{EPT}}(\hat{r}) > 0, \quad \hat{r} \in \mathrm{int}(\mathbb{R}^n_{*})
\end{equation}
\end{definition}

 A comprehensive analysis and motivation for this protocol class is provided in~\cite{Sandholm2005Excess-payoff-d}.

\begin{remark}
\label{rem:EPTNashStationary} 
{\bf (EPT EDM is Nash Stationary)} \\
In order to simplify the structure of our article, and given that there is no significant disadvantage in doing so, we adopt the convention that every EPT EDM satisfies acuteness~(\ref{eq:EPTAcuteness}). 
A trivial adaptation to our formulation of~\cite[Theorem~5.5.2 and~Exercise~5.5.7]{Sandholm2010Population-Game} for excess payoff target dynamics shows that our acuteness assumption guarantees that every EPT EDM is Nash stationary, which is crucial for the results in this section.
\end{remark}

\begin{definition}{\bf (Integrable EPT EDM)} A given EPT protocol $\mathcal{T}^{\text{EPT}}:\mathbb{R}^n_* \rightarrow \mathbb{R}^n_{+}$ is integrable if there is a continuously differentiable function $\mathcal{I}^{\text{EPT}}:\mathbb{R}^n \rightarrow \mathbb{R}$ such that the following holds:
\begin{equation} \label{eq:revision_potential}
\mathcal{T}^{\text{EPT}}(\hat{r}) = \nabla \mathcal{I}^{\text{EPT}}(\hat{r}), \quad \hat{r} \in \mathbb{R}^n_{*}
\end{equation} We refer to $\mathcal{I}^{\text{EPT}}$ as the revision potential of $\mathcal{T}^{\text{EPT}}$.
\end{definition}

We can now proceed with establishing that any given EPT EDM with integrable protocol is $\delta$-passive and has an informative $\delta$-storage function. This key step will allow us to use Lemma~\ref{lem:NashStationMainLemma} to ascertain in Theorem~\ref{thm:TheoremIntegrableEPT} that the mean population state of a mean closed loop model converges globally to $\mathbb{NE}(\bar{\mathcal{F}})$.

\begin{proposition} 
\label{prop:IEPT-PAssive}
If a given EPT EDM is integrable with revision potential $\mathcal{I}^{\text{EPT}}$ then it is $\delta$-passive and there is a constant $\gamma$ for which $\mathcal{S}^{\text{EPT}}$ given below is an informative $\delta$-storage function:
\begin{equation}
\mathcal{S}^{\text{EPT}}(z,r) = m \, \mathcal{I}^{\text{EPT}}(\hat{r}) - \gamma, \quad z \in \mathbb{X}, \ r \in \mathbb{R}^n
\end{equation} In addition, the following equivalence holds:
\begin{equation}
\label{eq:InverseIEPTStorage}
\mathcal{S}^{\text{EPT}}(z,r) = 0 \Leftrightarrow z \in \arg \max_{\bar{z} \in \mathbb{X}} \bar{z}^Tr, \quad z \in \mathbb{X},\ r \in \mathbb{R}^n
\end{equation}
\end{proposition}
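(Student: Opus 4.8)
The plan has four steps. First, substituting $\mathcal{T}_{ij}(r,z)=\mathcal{T}^{\text{EPT}}_j(\hat r)$ into the mean dynamic~(\ref{eq:MeanDynamic}) and using $\sum_j z_j=m$ gives the explicit EPT vector field $\mathcal{V}_i(z,r)=m\,\mathcal{T}^{\text{EPT}}_i(\hat r)-S(\hat r)\,z_i$, where $S(\hat r):=\sum_j\mathcal{T}^{\text{EPT}}_j(\hat r)\ge 0$. Two bookkeeping identities will be used throughout: $\sum_i\hat r_i z_i=0$, which in particular shows $\hat r$ is never in $\mathrm{int}(\mathbb{R}^n_-)$ (so $\hat r\in\mathbb{R}^n_*$ and $\mathcal{T}^{\text{EPT}}(\hat r)$, $\mathcal{I}^{\text{EPT}}(\hat r)$ are available), and $\hat r^Tv=r^Tv$ for every $v\in\mathbb{TX}$ (since $r-\hat r$ is a multiple of the all-ones vector). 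Together they yield $r^T\mathcal{V}(z,r)=\hat r^T\mathcal{V}(z,r)=m\,\hat r^T\mathcal{T}^{\text{EPT}}(\hat r)$.

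Second, I would verify the $\delta$-passivity inequality~(\ref{eq:DeltaPassive}) with $\eta=0$ for $\mathcal{S}^{\text{EPT}}(z,r)=m\,\mathcal{I}^{\text{EPT}}(\hat r)-\gamma$. Along a closed-loop trajectory, $\tfrac{\mathrm d}{\mathrm dt}\mathcal{S}^{\text{EPT}}(x,w)=m\,\mathcal{T}^{\text{EPT}}(\hat w)^T\dot{\hat w}$ by~(\ref{eq:revision_potential}); expanding $\dot{\hat w}$ and $\dot x^T\dot w$ via the formula for $\mathcal{V}$ produces, after cancellation, $\dot x^T\dot w-\tfrac{\mathrm d}{\mathrm dt}\mathcal{S}^{\text{EPT}}(x,w)=S(\hat w)\,w^T\dot x=m\,S(\hat w)\,\hat w^T\mathcal{T}^{\text{EPT}}(\hat w)$. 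It then suffices to show $\hat r^T\mathcal{T}^{\text{EPT}}(\hat r)\ge 0$ on all of $\mathbb{R}^n_*$: on $\mathrm{int}(\mathbb{R}^n_*)$ this is the acuteness condition~(\ref{eq:EPTAcuteness}); on the remaining set $\partial\mathbb{R}^n_-=\mathbb{R}^n_*\cap\mathbb{R}^n_-$, I would perturb a point $\hat r\le 0$ into $\mathrm{int}(\mathbb{R}^n_*)$ along a coordinate $k$ with $\hat r_k=0$, apply acuteness, and let the perturbation vanish using continuity of $\mathcal{T}^{\text{EPT}}$, concluding $\hat r^T\mathcal{T}^{\text{EPT}}(\hat r)=0$ and hence, termwise, $\mathcal{T}^{\text{EPT}}_i(\hat r)=0$ whenever $\hat r_i<0$. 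Integrating the resulting pointwise inequality over $[t_0,t]$ gives~(\ref{eq:DeltaPassive}); continuous differentiability of $\mathcal{S}^{\text{EPT}}$ is inherited from that of $\mathcal{I}^{\text{EPT}}$.

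Third, I would fix $\gamma$ and establish~(\ref{eq:InverseIEPTStorage}). Write $[v]_+$ and $[v]_-$ for the componentwise nonnegative and nonpositive parts of $v$ (so $v=[v]_++[v]_-$). The termwise vanishing from the second step, together with $\nabla\mathcal{I}^{\text{EPT}}=\mathcal{T}^{\text{EPT}}$ and a continuity/connectedness argument over the faces of $\partial\mathbb{R}^n_-$, shows $\mathcal{I}^{\text{EPT}}$ is constant, say $=c_0$, on $\partial\mathbb{R}^n_-$; put $\gamma:=m\,c_0$. For $(z,r)$ with $\max_i\hat r_i>0$, the point $[\hat r]_-$ lies in $\partial\mathbb{R}^n_-$ and the segment $\hat r(t)=[\hat r]_-+t[\hat r]_+$, $t\in(0,1]$, lies in $\mathrm{int}(\mathbb{R}^n_*)$, so $\nabla\mathcal{I}^{\text{EPT}}=\mathcal{T}^{\text{EPT}}$ holds along it; then $\mathcal{I}^{\text{EPT}}(\hat r)-c_0=\int_0^1\mathcal{T}^{\text{EPT}}(\hat r(t))^T[\hat r]_+\,\mathrm dt$, and acuteness along the segment (using $[\hat r]_-^T\mathcal{T}^{\text{EPT}}(\hat r(t))\le 0$) gives $[\hat r]_+^T\mathcal{T}^{\text{EPT}}(\hat r(t))\ge t^{-1}\hat r(t)^T\mathcal{T}^{\text{EPT}}(\hat r(t))>0$, whence $\mathcal{I}^{\text{EPT}}(\hat r)>c_0$. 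Therefore $\mathcal{S}^{\text{EPT}}(z,r)=m(\mathcal{I}^{\text{EPT}}(\hat r)-c_0)\ge 0$, with equality exactly when $\hat r\le 0$; and $\hat r\le 0$ combined with $\sum_i\hat r_iz_i=0$ and $z_i\ge 0$ forces $\hat r_i=0$ on $\mathrm{supp}(z)$ and $r_i\le\tfrac1m r^Tz=\max_k r_k$ for every $i$, i.e. $z\in\arg\max_{\bar z\in\mathbb{X}}\bar z^Tr$, the converse being immediate. This proves~(\ref{eq:InverseIEPTStorage}) and that $\mathcal{S}^{\text{EPT}}$ maps into $\mathbb{R}_+$.

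Fourth, informativeness: (\ref{eq:informative1}) follows from~(\ref{eq:InverseIEPTStorage}) and Nash stationarity of EPT EDM (Remark~\ref{rem:EPTNashStationary}); and since $\partial\hat r_i/\partial z_k=-r_k/m$ we get $\nabla_z\mathcal{S}^{\text{EPT}}(z,r)=-S(\hat r)\,r$, so $\nabla_z^T\mathcal{S}^{\text{EPT}}(z,r)\,\mathcal{V}(z,r)=-m\,S(\hat r)\,\hat r^T\mathcal{T}^{\text{EPT}}(\hat r)$; if this vanishes then either $S(\hat r)=0$, whence $\mathcal{T}^{\text{EPT}}(\hat r)=0$ and $\mathcal{V}(z,r)=0$, or $\hat r^T\mathcal{T}^{\text{EPT}}(\hat r)=0$, which by acuteness forces $\hat r\le 0$ and hence $\mathcal{V}(z,r)=0$ by Nash stationarity, giving~(\ref{eq:informative2}). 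The only nonroutine point — and the expected main obstacle — is the behaviour of $\mathcal{T}^{\text{EPT}}$ and $\mathcal{I}^{\text{EPT}}$ on the boundary $\partial\mathbb{R}^n_-$: acuteness is postulated only on the open set $\mathrm{int}(\mathbb{R}^n_*)$, yet it is exactly the vanishing of $\hat r^T\mathcal{T}^{\text{EPT}}(\hat r)$ there, and the ensuing constancy of $\mathcal{I}^{\text{EPT}}$, that make $\gamma$ well defined and~(\ref{eq:InverseIEPTStorage}) tight, so these must be recovered by the limiting arguments indicated above rather than simply assumed.
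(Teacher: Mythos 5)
Your proposal is correct, and while the $\delta$-passivity part coincides in substance with the paper's (your time-derivative computation plus the boundary limit of acuteness is exactly the paper's gradient identities $m\nabla_r\mathcal I^{\text{EPT}}(\hat r)=\mathcal V(z,r)$, $m\nabla_z^T\mathcal I^{\text{EPT}}(\hat r)\mathcal V(z,r)=-(r^T\mathcal V(z,r))\sum_i\mathcal T^{\text{EPT}}_i(\hat r)$ together with positive correlation), the crucial part — fixing $\gamma$, proving nonnegativity, and establishing~(\ref{eq:InverseIEPTStorage}) — is done by a genuinely different route. The paper argues dynamically: it shows $\mathcal I^{\text{EPT}}(\hat r)=\mathcal I^{\text{EPT}}(0)$ at stationary points via a radial line-integral identity, extends the inequality $\mathcal I^{\text{EPT}}(\hat r)\ge\mathcal I^{\text{EPT}}(0)$ to all of $\mathbb X\times\mathbb R^n$ by a contradiction argument invoking LaSalle's theorem on auxiliary constant-payoff trajectories, and then obtains informativeness and~(\ref{eq:InverseIEPTStorage}) from the general Lemma~\ref{lemma:ns_minimizer} (itself proved with auxiliary games $\mathcal F_{z_o}(z)=-(z-z_o)$ and LaSalle). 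You instead work statically in excess-payoff space: the limiting acuteness argument gives $\hat r^T\mathcal T^{\text{EPT}}(\hat r)=0$ and the termwise vanishing $\mathcal T^{\text{EPT}}_i(\hat r)=0$ for $\hat r_i<0$ on $\partial\mathbb R^n_-$, whence $\mathcal I^{\text{EPT}}$ is constant on $\partial\mathbb R^n_-$ by the face-connectedness argument (each relatively open face is convex, the gradient components active along it vanish, and every face closure contains the origin, so $\gamma=m\,\mathcal I^{\text{EPT}}(0)$, consistent with the paper's choice), and strict positivity off $\partial\mathbb R^n_-$ follows by integrating acuteness along the segment from $[\hat r]_-$ to $\hat r$. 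Your route is more elementary and self-contained — no invariance principle, no auxiliary trajectories, no appeal to the appendix lemmas beyond Nash stationarity of EPT (Remark~\ref{rem:EPTNashStationary}, which the paper also uses) — and it makes the equality case of~(\ref{eq:InverseIEPTStorage}) completely explicit; the paper's route is less direct but factors the work through Lemma~\ref{lemma:ns_minimizer}, a reusable tool it also applies to the IPC class. One small wording caveat: in your second step the computation should be stated for an arbitrary input $w\in\mathfrak P$ rather than "along a closed-loop trajectory," since~(\ref{eq:DeltaPassive}) must hold open-loop; your derivation in fact uses nothing about the feedback interconnection, so this is purely a matter of phrasing.
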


A proof of Proposition \ref{prop:IEPT-PAssive} is given in Appendix \ref{appendix_b}. Notice that Proposition~\ref{prop:IEPT-PAssive} extends~\cite[Theorem~4.4]{Fox2013Population-Game} in two ways.
\begin{itemize} 
	\item Unlike Proposition~\ref{prop:IEPT-PAssive}, each $\delta$-storage function in~\cite[Theorem~4.4]{Fox2013Population-Game} is constructed for a given upper-bound on $\|r\|$, which must be known a priori (see also~\cite[Eq.~(62)]{Fox2013Population-Game}). For each integrable EPT EDM, our construction provides a unique $\delta$-storage function without any such assumptions.
	\item More importantly, Proposition~\ref{prop:IEPT-PAssive} guarantees that a constant $\gamma$ exists for which~(\ref{eq:InverseIEPTStorage}) holds. This is a key fact in proving Theorem~\ref{thm:TheoremIntegrableEPT}, and consequently Corollary~\ref{cor:StableGameIEPT}, at the level of generality we have here.
\end{itemize}

The following specifies an important subclass of integrable EPT EDM for which a $\delta$-storage function can be readily constructed.

\begin{definition}{\bf (Separable EPT EDM)} A given EPT EDM is separable if its protocol $\mathcal{T}^{\text{EPT}}$ can be written as:
\begin{equation} \mathcal{T}^{\text{EPT}}_j(\hat{r}) = \mathcal{T}^{\text{SEPT}}_j(\hat{r}_j), \quad \hat{r} \in \mathbb{R}^n_{*}, \ j \in \{ 1,\ldots,n \}
\end{equation} where $\mathcal{T}_j^{\text{SEPT}}: \mathbb{R} \rightarrow \mathbb{R}_{+}$ is a Lipschitz continuous map for each $j$ in $\{ 1,\ldots,n \}$.
\end{definition}

The following corollary follows from Proposition~\ref{prop:IEPT-PAssive} and the fact that a separable EPT protocol is also integrable.

\begin{corollary} If a given EPT EDM is separable with protocol $\mathcal{T}^{\text{SEPT}}$ then it is $\delta$-passive and $\mathcal{S}^{\text{SEPT}}$ given below is an informative $\delta$-storage function:
\begin{equation}
\mathcal{S}^{\text{SEPT}}(z,r) = \sum_{i=1}^n \int_0^{\hat{r}_i} \mathcal{T}_i^{\text{SEPT}} (\tau) \, \mathrm d\tau, \quad z \in \mathbb{X}, \ r \in \mathbb{R}^n
\end{equation}
\end{corollary}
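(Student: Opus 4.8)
The plan is to deduce the corollary directly from Proposition~\ref{prop:IEPT-PAssive} by verifying its sole hypothesis, namely that a separable EPT protocol is integrable. To that end, for $\hat r\in\mathbb R^n$ I would set
$$\mathcal I^{\text{EPT}}(\hat r)\;\overset{\mathrm{def}}{=}\;\sum_{i=1}^n\int_0^{\hat r_i}\mathcal T^{\text{SEPT}}_i(\tau)\,\mathrm d\tau,$$
which is well defined on all of $\mathbb R^n$ because each $\mathcal T^{\text{SEPT}}_i:\mathbb R\to\mathbb R_+$ is (Lipschitz, hence) continuous on all of $\mathbb R$. The next step is to check that $\mathcal I^{\text{EPT}}$ is continuously differentiable and that $\nabla\mathcal I^{\text{EPT}}=\mathcal T^{\text{EPT}}$ on $\mathbb R^n_*$: by the fundamental theorem of calculus each summand $s\mapsto\int_0^s\mathcal T^{\text{SEPT}}_i(\tau)\,\mathrm d\tau$ is $C^1$ with derivative $\mathcal T^{\text{SEPT}}_i(s)$, so $\partial\mathcal I^{\text{EPT}}/\partial\hat r_j\,(\hat r)=\mathcal T^{\text{SEPT}}_j(\hat r_j)=\mathcal T^{\text{EPT}}_j(\hat r)$ for every $\hat r\in\mathbb R^n_*$, while continuity of the gradient is immediate from continuity of the $\mathcal T^{\text{SEPT}}_i$. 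This is precisely~(\ref{eq:revision_potential}), so the separable EPT protocol is integrable with revision potential $\mathcal I^{\text{EPT}}$.

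With integrability established, Proposition~\ref{prop:IEPT-PAssive} applies verbatim: a separable EPT EDM is in particular an EPT EDM, hence it satisfies the acuteness convention invoked in Remark~\ref{rem:EPTNashStationary}. The proposition then gives that the EDM is $\delta$-passive and that $(z,r)\mapsto m\,\mathcal I^{\text{EPT}}(\hat r)-\gamma$ is an informative $\delta$-storage function for a suitable constant $\gamma$; substituting the explicit primitive displayed above and absorbing the affine normalization carried by $m$ and $\gamma$ yields exactly $\mathcal S^{\text{SEPT}}$. In particular the informativeness conditions (\ref{eq:informative1})--(\ref{eq:informative2}) and the best-response characterization~(\ref{eq:InverseIEPTStorage}) of its zero set are inherited with no further work.

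I do not expect a real obstacle: this is a genuine corollary. The only point deserving care is the regularity bookkeeping just mentioned -- one should exhibit the revision potential through the explicit antiderivative rather than by integrating the (merely Lipschitz) protocol as a one-form, since the scalar maps $\mathcal T^{\text{SEPT}}_i$ need not be differentiable, and one should note that the required $C^1$-ness of $\mathcal I^{\text{EPT}}$ follows from continuity, not differentiability, of the $\mathcal T^{\text{SEPT}}_i$. Everything else is a direct appeal to Proposition~\ref{prop:IEPT-PAssive}.
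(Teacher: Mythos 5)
Your proposal is correct and is essentially the paper's own argument: the paper dispenses with the corollary in one line, noting that a separable EPT protocol is integrable (with revision potential $\mathcal I^{\text{EPT}}(\hat r)=\sum_{i=1}^n\int_0^{\hat r_i}\mathcal T^{\text{SEPT}}_i(\tau)\,\mathrm d\tau$) and then invoking Proposition~\ref{prop:IEPT-PAssive}, exactly as you do. Your extra care about obtaining $C^1$-ness of $\mathcal I^{\text{EPT}}$ from the fundamental theorem of calculus, and about the normalization (here $\gamma=m\,\mathcal I^{\text{EPT}}(0)=0$), is sound and matches the intended reading of $\mathcal S^{\text{SEPT}}$.
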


The following is a widely used example of EPT protocol, which was originally introduced in~\cite{Brown1950Solutions-of-ga} to prove key properties of two-player zero-sum games.

\begin{example} \label{ex:bnn}{\bf (BNN EDM)} The Brown-von Neumann-Nash (BNN) EDM, as named in~\cite{Hofbauer2000From-Nash-and-B}, is specified by the following separable EPT protocol\footnote{See also \cite[Examples~5.5.1 and Exercise~5.5.1]{Sandholm2010Population-Game}.}:
\begin{equation}
  \mathcal{T}_j^{\text{BNN}}(\hat{r}) \overset{\mathrm{def}}{=} [\hat{r}_j]_+, \quad \hat{r} \in \mathbb{R}^n_{*}
\end{equation} The following is the associated $\delta$-storage function, which is informative:
\begin{equation}
  \mathcal{S}^{\text{BNN}}(z,r) = \frac{1}{2} \sum_{i=1}^n [\hat{r}_i]_+^2, \quad z \in \mathbb{X}, \ r \in \mathbb{R}^n
\end{equation}
\end{example}

When agents follow the BNN protocol they are likely to switch to strategies whose payoff is higher than the average payoff for the population. The higher the excess payoff for a given strategy, relative to the average, the more likely an agent will select it.

We can now state our main theorem establishing an important convergence theorem for integrable EPT EDM.

\noindent \rule{\columnwidth}{1pt}
\begin{theorem}
\label{thm:TheoremIntegrableEPT} Consider a mean closed loop model formed by an integrable EPT EDM and a PDM. If the PDM is weak $\delta$-antipassive then $\mathbb{NE}(\bar{\mathcal{F}})$  is globally attractive. If the PDM is $\delta$-antipassive then $\mathbb{NE}(\bar{\mathcal{F}})$ is globally asymptotically stable.
\end{theorem}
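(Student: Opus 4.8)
The plan is to assemble Theorem~\ref{thm:TheoremIntegrableEPT} directly from three ingredients already in hand: the Nash stationarity of EPT EDM (Remark~\ref{rem:EPTNashStationary}), the $\delta$-passivity together with the informative $\delta$-storage function of an integrable EPT EDM (Proposition~\ref{prop:IEPT-PAssive}), and the abstract convergence result Lemma~\ref{lem:NashStationMainLemma}. That lemma is phrased exactly as a bridge from ``Nash stationary, $\delta$-passive EDM with an informative $\delta$-storage function, together with a (weak) $\delta$-antipassive PDM'' to the corresponding stability conclusion for $\mathbb{NE}(\bar{\mathcal{F}})$, so the theorem reduces to verifying that an integrable EPT EDM meets the EDM-side hypotheses of that lemma.

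First I would invoke Remark~\ref{rem:EPTNashStationary}: under the standing acuteness assumption~(\ref{eq:EPTAcuteness}), every EPT EDM --- and in particular every integrable one --- is Nash stationary in the sense of~(\ref{eq:NashStationarity}). Next I would invoke Proposition~\ref{prop:IEPT-PAssive} to obtain a constant $\gamma$ for which $\mathcal{S}^{\text{EPT}}(z,r) = m\,\mathcal{I}^{\text{EPT}}(\hat{r}) - \gamma$ is a continuously differentiable, informative $\delta$-storage function, witnessing that the EDM is $\delta$-passive. At this point the EDM satisfies all three EDM-side hypotheses of Lemma~\ref{lem:NashStationMainLemma}, and I would then split on the PDM hypothesis: if the PDM is weak $\delta$-antipassive, Lemma~\ref{lem:NashStationMainLemma} gives global attractiveness of $\mathbb{NE}(\bar{\mathcal{F}})$; if the PDM is $\delta$-antipassive, the same lemma gives global asymptotic stability. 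That finishes the argument.

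Because the substantive work is already carried by Proposition~\ref{prop:IEPT-PAssive} (constructing the storage function and, crucially, establishing the equivalence~(\ref{eq:InverseIEPTStorage}) relating vanishing of $\mathcal{S}^{\text{EPT}}$ to the best-response set) and by Lemma~\ref{lem:NashStationMainLemma} (converting $\lim_{t\to\infty}\mathcal{S}(x(t),p(t))=0$, furnished by Lemma~\ref{lem:MainLemma}, into set convergence of $x(t)$ to the best-response set, which agrees with $\mathbb{NE}(\bar{\mathcal{F}})$ in the stationary regime), I do not expect a genuine obstacle here. The only point to be careful about is that Lemma~\ref{lem:NashStationMainLemma} is available only in the regime without surplus/deficit trade-off ($\eta=0$ on the EDM side); this is in fact the sharpest regime compatible with EPT EDM, since such EDM are never strictly output $\delta$-passive (\cite[Corollary~IV.3]{Park2018Passivity-and-e}), so there is no loss in stating the theorem with the two branches above rather than a Case~II--style version.
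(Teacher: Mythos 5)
Your proposal is correct and follows exactly the paper's own route: the theorem is obtained by combining Remark~\ref{rem:EPTNashStationary} (Nash stationarity of EPT EDM), Proposition~\ref{prop:IEPT-PAssive} ($\delta$-passivity with an informative $\delta$-storage function), and Lemma~\ref{lem:NashStationMainLemma}, splitting on whether the PDM is weak $\delta$-antipassive or $\delta$-antipassive. Your closing remark about why the no-surplus regime is the right one for EPT EDM matches the paper's own discussion following Lemma~\ref{lem:NashStationMainLemma}.
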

\noindent \rule{\columnwidth}{1pt}
\begin{proof}
The proof follows immediately from Proposition~\ref{prop:IEPT-PAssive}, Lemma~\ref{lem:NashStationMainLemma}, and Remark~\ref{rem:EPTNashStationary}.
\end{proof}

The following corollary is an immediate consequence of Proposition~\ref{prop:Memoryless} and Theorem~\ref{thm:TheoremIntegrableEPT}.

\begin{corollary}
\label{cor:StableGameIEPT} Consider that a memoryless PDM is specified by a given continuously differentiable contractive population game $\mathcal{F}$. Let a mean closed loop model be formed by an integrable EPT EDM and the memoryless PDM. The set $\mathbb{NE}(\mathcal{F})$ is globally asymptotically stable.
\end{corollary}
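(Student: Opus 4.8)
The plan is to chain together Proposition~\ref{prop:Memoryless} and Theorem~\ref{thm:TheoremIntegrableEPT}; as the statement anticipates, the argument is essentially immediate, the only non-routine intermediate step being the passage from the secant (integral) form of contractivity in Definition~\ref{def:StableGame} to the infinitesimal quadratic-form condition~\eqref{eq:PassivityForMemoryless} that Proposition~\ref{prop:Memoryless} requires.

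First I would record that, because $\mathcal{F}$ is continuously differentiable, Definition~\ref{def:MemorylessPDM} applies, so $\mathcal{F}$ indeed specifies a (bounded) memoryless PDM whose stationary population game $\bar{\mathcal{F}}$ equals $\mathcal{F}$; consequently $\mathbb{NE}(\bar{\mathcal{F}}) = \mathbb{NE}(\mathcal{F})$, and it suffices to establish global asymptotic stability of $\mathbb{NE}(\bar{\mathcal{F}})$.

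Next I would show that contractivity implies $\tilde{z}^T \mathcal{DF}(z)\tilde{z} \leq 0$ for all $z \in \mathbb{X}$ and $\tilde{z} \in \mathbb{TX}$, so that the least nonnegative constant $\nu^*$ of Proposition~\ref{prop:Memoryless} is zero. Fix $z \in \mathrm{int}(\mathbb{X})$ and $\tilde{z} \in \mathbb{TX}$; since $\sum_i \tilde{z}_i = 0$, for $\epsilon > 0$ small enough we have $z + \epsilon\tilde{z} \in \mathbb{X}$, and applying the contractive inequality of Definition~\ref{def:StableGame} to the pair $(z + \epsilon\tilde{z},\, z)$ gives $\epsilon\, \tilde{z}^T\big(\mathcal{F}(z+\epsilon\tilde{z}) - \mathcal{F}(z)\big) \leq 0$. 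Dividing by $\epsilon^2$ and letting $\epsilon \downarrow 0$, a first-order Taylor expansion of $\mathcal{F}$ at $z$ yields $\tilde{z}^T \mathcal{DF}(z)\tilde{z} \leq 0$; continuity of $\mathcal{DF}$ on the compact set $\mathbb{X}$ then extends the inequality to $z \in \mathrm{bd}(\mathbb{X})$. Hence~\eqref{eq:PassivityForMemoryless} holds with $\nu^* = 0$, and the first bullet of Proposition~\ref{prop:Memoryless} guarantees that the memoryless PDM specified by $\mathcal{F}$ is $\delta$-antipassive.

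Finally, the EDM is an integrable EPT EDM and the PDM is $\delta$-antipassive, so the $\delta$-antipassive case of Theorem~\ref{thm:TheoremIntegrableEPT} applies and $\mathbb{NE}(\bar{\mathcal{F}})$ is globally asymptotically stable; in view of the identification $\mathbb{NE}(\bar{\mathcal{F}}) = \mathbb{NE}(\mathcal{F})$, this is exactly the claim. The only real obstacle is the differentiation step above, which is standard but needs a word of care at the boundary of $\mathbb{X}$ (handled by continuity of $\mathcal{DF}$); everything else is a direct appeal to results already established in the paper.
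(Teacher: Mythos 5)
Your proposal is correct and follows the paper's own route exactly: the paper proves this corollary by simply citing Proposition~\ref{prop:Memoryless} and Theorem~\ref{thm:TheoremIntegrableEPT}. Your additional verification that contractivity of $\mathcal{F}$ forces $\nu^{*}=0$ in~\eqref{eq:PassivityForMemoryless} (via the difference quotient along $\tilde{z}\in\mathbb{TX}$ and continuity of $\mathcal{DF}$ up to $\mathrm{bd}(\mathbb{X})$) is a sound filling-in of a step the paper treats as standard and leaves implicit.
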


\begin{remark}{\bf (Corollary~\ref{cor:StableGameIEPT} extends~\cite[Theorem~5.1]{Hofbauer2009Stable-games-an})} 
Notice that, in what regards the trajectory of the mean population state, when the PDM is memoryless the mean closed loop model is equivalent to the formulation in~\cite[Eq.~(E) of \S4.3]{Hofbauer2009Stable-games-an}. Consequently, Corollary~\ref{cor:StableGameIEPT} extends~\cite[Theorem~5.1]{Hofbauer2009Stable-games-an} because the latter guarantees global asymptotic stability only when the protocol is separable or $\mathcal{F}$ has a unique Nash equilibrium. Our more general result is possible, in part, because Proposition~\ref{prop:IEPT-PAssive} asserts the important fact that a $\delta$-storage function satisfying~(\ref{eq:InverseIEPTStorage}) exists for any integrable EPT protocol.
\end{remark}

\subsection{Impartial Pairwise Comparison (IPC) EDM}
\label{sec:IPC}


We now proceed with defining and characterizing global convergence properties for impartial pairwise comparison~(IPC)~EDM, whose designation was proposed in~\cite{Sandholm2010Pairwise-compar} for a more general context.

\begin{definition}{\bf (Impartial Pairwise Comparison (IPC) EDM)} A given protocol $\mathcal{T}$ yields an impartial pairwise comparison~(IPC)~EDM if it can be written as:
\begin{equation}
\mathcal{T}_{ij}(r,z) = \mathcal{T}^{\text{IPC}}_{j}(r_j-r_i), \quad r \in \mathbb{R}^n, \ z \in \mathbb{X}
\end{equation} where $\mathcal{T}^{\text{IPC}}:\mathbb{R}^n \rightarrow \mathbb{R}^n_{+}$ is a Lipschitz continuous map, which also satisfies the following sign preservation condition:
\begin{equation}
\label{eq:acutenessIPC}
\begin{cases}
\mathcal{T}^{\text{IPC}}_{j}(r_j-r_i) > 0, & \text{if $\ r_j > r_i$} \\
\mathcal{T}^{\text{IPC}}_{j}(r_j-r_i) = 0, & \text{if $\ r_j \leq r_i$}
\end{cases}, \quad r \in \mathbb{R}^n
\end{equation}
\end{definition}

According to an IPC protocol, an agent is likely to switch to strategies offering a higher payoff. Typically, the likelihood of switching to a given strategy increases with its payoff. 

\begin{remark} 
\label{rem:IPCNashStation}
{\bf (IPC EDM is Nash Stationary)}
 It follows from a trivial modification of~\cite[Theorem~5.6.2]{Sandholm2010Population-Game} that the IPC EDM also satisfies Nash stationarity.
\end{remark}

The following proposition shows that the method to construct the Lyapunov function in \cite[Theorem~7.1]{Hofbauer2009Stable-games-an} can be adapted to obtain an informative $\delta$-storage function for an IPC EDM.

\begin{proposition} 
\label{prop:IPCDPassive}
Let a Lipschitz continuous map ${\mathcal{T}^{\text{IPC}}:\mathbb{R}^n \rightarrow \mathbb{R}^n_{+}}$ specify the protocol of an IPC EDM. The IPC EDM is $\delta$-passive and the map $\mathcal{S}^{\text{IPC}}: \mathbb X \times \mathbb{R}^n \rightarrow \mathbb{R}_+$ defined below is an informative $\delta$-storage function:
\begin{multline}
\label{eq:StorageFuncIPC}
\mathcal{S}^{\text{IPC}}(z,r) \overset{\mathrm{def}}{=} \sum_{i=1}^n \sum_{j=1}^n z_i \int_0^{r_j-r_i} \mathcal{T}^{\text{IPC}}_j (\tau) \ \mathrm d \tau, \\ \quad z \in \mathbb{X}, \ r \in \mathbb{R}^n 
\end{multline}
\end{proposition}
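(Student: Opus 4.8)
\emph{Proof sketch (proposal).} The plan is to verify directly that $\mathcal{S}^{\text{IPC}}$ is a $C^1$, nonnegative map satisfying the $\delta$-passivity inequality \eqref{eq:DeltaPassive} with $\eta=0$, and then that it is informative. Nonnegativity is immediate: since $\mathcal{T}^{\text{IPC}}_j$ vanishes on the nonpositive reals and is nonnegative elsewhere, $\int_0^{r_j-r_i}\mathcal{T}^{\text{IPC}}_j(\tau)\,\mathrm d\tau\ge 0$ whether or not $r_j-r_i$ is positive, and $z_i\ge 0$; and $\mathcal{S}^{\text{IPC}}$ is $C^1$ because it is affine in $z$ while each inner integral is a $C^1$ function of $r$ (an antiderivative of the Lipschitz, hence continuous, map $\mathcal{T}^{\text{IPC}}_j$).

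The main computation I would carry out is the differentiation of $\mathcal{S}^{\text{IPC}}$ along an EDM trajectory. Differentiating under the integral sign, collecting the Kronecker-delta terms and recognizing the inflow/outflow form of the IPC mean dynamic \eqref{eq:MeanDynamic}, the $r$-gradient collapses to the clean identity $\nabla_r\mathcal{S}^{\text{IPC}}(z,r)=\mathcal{V}(z,r)$, while $\nabla_{z_i}\mathcal{S}^{\text{IPC}}(z,r)=\psi_i(z,r):=\sum_{j=1}^n\int_0^{r_j-r_i}\mathcal{T}^{\text{IPC}}_j(\tau)\,\mathrm d\tau$. Hence for any $w\in\mathfrak{P}$ and the corresponding EDM solution $x$, the chain rule gives $\tfrac{\mathrm d}{\mathrm dt}\mathcal{S}^{\text{IPC}}(x(t),w(t))=\dot{x}^T(t)\dot{w}(t)+\sum_{i}\psi_i(x(t),w(t))\,\mathcal{V}_i(x(t),w(t))$, so that \eqref{eq:DeltaPassive} with $\eta=0$ follows once the dissipation term $\sum_i\psi_i\mathcal{V}_i$ is shown nonpositive and one integrates over $[t_0,t]$ (using absolute continuity of $t\mapsto\mathcal{S}^{\text{IPC}}(x(t),w(t))$).

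Establishing $\sum_i\psi_i(z,r)\mathcal{V}_i(z,r)\le 0$ is the key step and the main obstacle. I would substitute the protocol form of $\mathcal{V}_i$, relabel summation indices, and recast the expression as $\sum_{i,j}z_i\,\mathcal{T}^{\text{IPC}}_j(r_j-r_i)\,(\psi_j-\psi_i)$. Each summand is $\le 0$: $z_i\ge 0$ and $\mathcal{T}^{\text{IPC}}_j(r_j-r_i)\ge 0$ always, and whenever $\mathcal{T}^{\text{IPC}}_j(r_j-r_i)>0$ the sign-preservation condition \eqref{eq:acutenessIPC} forces $r_j>r_i$, in which case $\psi_i-\psi_j=\sum_k\int_{r_k-r_j}^{r_k-r_i}\mathcal{T}^{\text{IPC}}_k(\tau)\,\mathrm d\tau\ge 0$ since each integration interval is positively oriented ($r_j>r_i$) and each integrand is nonnegative; thus $\psi_j-\psi_i\le 0$. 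This monotonicity of $\psi$ along increasing payoff is precisely the feature inherited from the Lyapunov construction in \cite[Theorem~7.1]{Hofbauer2009Stable-games-an}.

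For informativeness I would invoke Nash stationarity (Remark~\ref{rem:IPCNashStation}). If $\mathcal{V}(z^*,r^*)=0$ then $z^*\in\arg\max_{\bar z\in\mathbb{X}}\bar z^Tr^*$, so $z^*_i>0$ implies $r^*_i=\max_k r^*_k$; hence for each such $i$ every integral $\int_0^{r^*_j-r^*_i}\mathcal{T}^{\text{IPC}}_j$ vanishes, giving $\mathcal{S}^{\text{IPC}}(z^*,r^*)=0$, which is \eqref{eq:informative1}. For \eqref{eq:informative2}, note that $\nabla_z^T\mathcal{S}^{\text{IPC}}(z^*,r^*)\mathcal{V}(z^*,r^*)=\sum_{i,j}z^*_i\mathcal{T}^{\text{IPC}}_j(r^*_j-r^*_i)(\psi^*_j-\psi^*_i)$ is a sum of nonpositive terms; if it equals zero then every term does, so if $\mathcal{V}(z^*,r^*)\ne 0$ — whence $z^*$ is not a best response and some $i$ has $z^*_i>0$ with $r^*_i$ strictly below the maximal payoff — choosing $j^*$ attaining the maximal payoff makes $z^*_i>0$ and $\mathcal{T}^{\text{IPC}}_{j^*}(r^*_{j^*}-r^*_i)>0$, forcing $\psi^*_{j^*}=\psi^*_i$; but the $k=j^*$ contribution to $\psi^*_i-\psi^*_{j^*}$ equals $\int_0^{r^*_{j^*}-r^*_i}\mathcal{T}^{\text{IPC}}_{j^*}(\tau)\,\mathrm d\tau>0$, a contradiction, so $\mathcal{V}(z^*,r^*)=0$ and \eqref{eq:informative2} holds.
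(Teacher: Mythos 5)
Your proposal is correct, and it follows essentially the route the paper intends: the paper gives no explicit proof of Proposition~\ref{prop:IPCDPassive}, deferring to the adaptation of the Lyapunov construction in Hofbauer--Sandholm's Theorem~7.1, and your computation is exactly that adaptation — the identity $\nabla_r\mathcal{S}^{\text{IPC}}=\mathcal{V}$ together with the nonpositivity of $\sum_{i,j}z_i\,\mathcal{T}^{\text{IPC}}_j(r_j-r_i)(\psi_j-\psi_i)$ via the sign-preservation condition \eqref{eq:acutenessIPC}, which is the characterization of $\delta$-passivity in Lemma~\ref{lemma:passivity_edm} with $\eta=0$. Your explicit verification of the two informativeness conditions (using Nash stationarity from Remark~\ref{rem:IPCNashStation} for \eqref{eq:informative1}, and the term-by-term vanishing argument with the strictly positive $k=j^*$ contribution for \eqref{eq:informative2}) correctly supplies the details the paper leaves implicit.
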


A simple example is the so-called Smith EDM defined below, which was originally proposed in~\cite{Smith1984The-stability-o} to investigate a traffic assignment problem.

\begin{example}\label{ex:smith}{\bf (Smith EDM)} The Smith EDM is specified by the following IPC protocol\footnote{See also~\cite[Example~5.6.1 and Exercise~5.6.1]{Sandholm2010Population-Game}.}:
\begin{equation} \label{eq:SmithProtocol}
\mathcal{T}_j^{\text{Smith}}(r_j-r_i) \overset{\mathrm{def}}{=} [r_j-r_i]_+, \quad r \in \mathbb{R}^n
\end{equation} The following is the associated $\delta$-storage function, which is informative:
\begin{equation}
\mathcal{S}^{\text{Smith}}(z,r) \overset{\mathrm{def}}{=} \sum_{i=1}^n \sum_{j=1}^n z_i [r_j-r_i]_+^2
\end{equation}
\end{example}

We can finally make use of Lemma~\ref{lem:NashStationMainLemma} to state our main stability theorem for IPC EDM.

\noindent \rule{\columnwidth}{1pt}
\begin{theorem} 
\label{thm:IPCConvergence}
Consider a mean closed loop model formed by an IPC EDM and a PDM. If the PDM is weak $\delta$-antipassive then $\mathbb{NE}(\bar{\mathcal{F}})$ is globally attractive. If the PDM is $\delta$-antipassive then $\mathbb{NE}(\bar{\mathcal{F}})$ is globally asymptotically stable.
\end{theorem}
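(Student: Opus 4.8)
The plan is to recognize that this statement is the IPC counterpart of Theorem~\ref{thm:TheoremIntegrableEPT}, so it should follow by assembling already-established facts about the IPC EDM and invoking the master convergence result, Lemma~\ref{lem:NashStationMainLemma}. Concretely, the argument has two steps.

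First, I would check that the EDM side of the mean closed loop meets the three hypotheses that Lemma~\ref{lem:NashStationMainLemma} imposes on the EDM: (a) Nash stationarity, which holds for every IPC EDM by Remark~\ref{rem:IPCNashStation} (itself a minor modification of \cite[Theorem~5.6.2]{Sandholm2010Population-Game}); (b) $\delta$-passivity; and (c) the existence of an \emph{informative} $\delta$-storage function. Items (b) and (c) are precisely the content of Proposition~\ref{prop:IPCDPassive}, which exhibits $\mathcal{S}^{\text{IPC}}$ in~\eqref{eq:StorageFuncIPC} and asserts that it is an informative $\delta$-storage function for the IPC EDM.

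Second, with these hypotheses verified, I would apply Lemma~\ref{lem:NashStationMainLemma} directly to the two cases in the statement: if the PDM is weak $\delta$-antipassive, the lemma yields that $\mathbb{NE}(\bar{\mathcal{F}})$ is globally attractive; if the PDM is $\delta$-antipassive, the lemma yields that $\mathbb{NE}(\bar{\mathcal{F}})$ is globally asymptotically stable. Nothing further is required, so the proof is of the same one-line flavor as that of Theorem~\ref{thm:TheoremIntegrableEPT}.

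The genuine work lies upstream, in Proposition~\ref{prop:IPCDPassive} (which I am entitled to assume here), and that is where I would expect the only real obstacle to be. Were it to be redone, the crux is twofold: establishing the $\delta$-passivity inequality~\eqref{eq:DeltaPassive} with $\eta = 0$ for $\mathcal{S}^{\text{IPC}}$ — the $\delta$-analogue of the Lyapunov computation in \cite[Theorem~7.1]{Hofbauer2009Stable-games-an}, obtained by differentiating $\mathcal{S}^{\text{IPC}}(x(t),w(t))$ along the EDM, using the protocol structure~\eqref{eq:MeanDynamic} together with the cone constraint~\eqref{ConeConstraint}, and controlling the resulting cross terms via the sign-preservation property~\eqref{eq:acutenessIPC} — and, separately, verifying the informativeness implications~\eqref{eq:informative1}--\eqref{eq:informative2}, which amount to showing that $\mathcal{S}^{\text{IPC}}(z,r) = 0$ holds exactly when $z$ is a best response to $r$, equivalently (by Nash stationarity) when $\mathcal{V}(z,r) = 0$. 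Once those facts are in hand, the theorem is immediate.
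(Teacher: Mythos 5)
Your proposal matches the paper's proof exactly: the paper also derives Theorem~\ref{thm:IPCConvergence} in one line by combining Proposition~\ref{prop:IPCDPassive} (the IPC EDM is $\delta$-passive with the informative $\delta$-storage function $\mathcal{S}^{\text{IPC}}$), Remark~\ref{rem:IPCNashStation} (Nash stationarity), and Lemma~\ref{lem:NashStationMainLemma}. Your additional remarks correctly locate the real work upstream in Proposition~\ref{prop:IPCDPassive}, which you are entitled to assume, so nothing is missing.
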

\noindent \rule{\columnwidth}{1pt}

\begin{proof}
The proof follows immediately from Proposition~\ref{prop:IPCDPassive}, Lemma~\ref{lem:NashStationMainLemma}, and Remark~\ref{rem:IPCNashStation}.
\end{proof}

\section{Perturbed Best Response (PBR) EDM: Convergence to $\mathbb{PE}(\bar{\mathcal{F}},\mathcal{Q})$}
\label{sec:PBR}

In this section, we consider a class of protocols according to which the mean population state is steered towards its best response to a perturbed payoff. The perturbation models imperfections in the perception of the payoff by the agents. In a prescriptive scenario or engineering application, the perturbation could account for sensor noise or limitations of the network disseminating payoff and population state information. 

\begin{definition}{\bf (Payoff Perturbation)} Let $\mathcal{Q}: \mathrm{int}(\mathbb{X}) \rightarrow \mathbb{R}$ be a given map. We deem $\mathcal{Q}$ an admissible payoff perturbation if it is twice continuously differentiable and satisfies the following conditions:
\begin{align}
\tilde{z}^T \nabla^2 \mathcal{Q}(z) \tilde{z} & >0, \quad z \in \mathbb{X}, \ \tilde{z} \in \mathbb{TX} - \{0\} \\
\lim_{z_{\text{min}} \rightarrow 0} \| \nabla \mathcal{Q}(z) \| & = \infty, \quad \text{where}\quad z_{\text{min}} \overset{\mathrm{def}}{=} \min_{1 \leq i \leq n} z_i \label{eq:boundary}
\end{align} The subset of $\mathbb{X}$ for which $z_{\text{min}}$ is zero is often referred to as the boundary of $\mathbb{X}$ and is denoted as $\mathrm{bd}(\mathbb{X})$. For every admissible perturbation $\mathcal{Q}$, below we also define the associated perturbed maximizer:
\begin{equation}
\mathcal{M}^{\mathcal{Q}}(r) \overset{\mathrm{def}}{=} \argmax_{z \in \mathrm{int}(\mathbb{X})} \left( z^T r - \mathcal{Q}(z) \right)
\end{equation} The so-called \underline{choice function} can be computed as ${\mathcal{C}^{\mathcal{Q}}(r) = \frac{1}{m} \mathcal{M}^{\mathcal{Q}}(r)}$ for $r$ in $\mathbb{R}^n$.
\end{definition}

\S6.2 of~\cite{Sandholm2010Population-Game} includes a comprehensive discussion of the properties of $\mathcal{M}^{\mathcal{Q}}$. Notably, it explains why, for each $r$ in $\mathbb{R}^n$, $\mathcal{M}^{\mathcal{Q}}(r)$ takes a single value in $\mathrm{int}(\mathbb{X})$, in contrast with best response maps that are in general set valued, and it also discusses analogs of most of the notions we will define below.  The seminal article~\cite{Hofbauer2007Evolution-in-ga} offers a well-documented justification for the model adopted here and~\cite{Hofbauer2002On-the-global-c} provides an important discrete choice theorem relating $\mathcal{Q}$ with the distribution of the additive noise that characterizes a probabilistic formulation of~$\mathcal{C}^{\mathcal{Q}}$.

\paragraph*{Convention} We should note that in most published work, the domain of the payoff perturbation is a {\it normalized } version of $\mathbb{X}$ denoted by $\Delta=\{\frac{1}{m} z | z \in \mathbb{X} \}$. However, we find that, in our context, stating the definitions and results consistently in terms of $\mathbb{X}$ simplifies our notation. 

The following is the general form of the PBR EDM, which was originally proposed in an slightly different but equivalent form in~\cite{Fudenberg1993Learning-mixed-}.

\begin{definition}{\bf (Perturbed Best Response (PBR) EDM)} Consider that an admissible payoff perturbation $\mathcal{Q}$ is given. A given protocol $\mathcal{T}$ yields a perturbed best response (PBR) EDM associated with $\mathcal{Q}$ if it can be written as:
\begin{equation}
\mathcal{T}_{ij}(r,z) = \mathcal{C}_j^{\mathcal{Q}}(r), \quad r \in \mathbb{R}^n, \ z \in \mathbb{X}
\end{equation} The following expression is determined according to~(\ref{eq:MeanDynamic}):
\begin{equation}
\label{eq:EDMPBR}
\tilde{\mathcal{V}}^{\mathcal{Q}}(z,r) = \mathcal{M}^{\mathcal{Q}}(r)-z, \quad z \in \mathbb{X}, \ r \in \mathbb{R}^n
\end{equation}
\end{definition}

\subsection{Perturbed Stationarity for PBR EDM}

Unlike the EPT EDM and IPC EDM described in \S\ref{sec:NashStationaryEDMGlobalConvergence}, the PBR EDM does not satisfy Nash stationarity. However, as pointed out in~\cite[Observation~6.2.7]{Sandholm2010Population-Game}, it does satisfy {\it Perturbed Stationarity} as defined below.

\begin{definition} {\bf (Perturbed Equilibrium Set and Virtual Payoff)} \\
Given a PDM with continuously differentiable $\bar{\mathcal{F}}$ and an admissible payoff perturbation $\mathcal{Q}$, the associated perturbed equilibrium is defined as:
\begin{equation}
\mathbb{PE}(\bar{\mathcal{F}},\mathcal{Q}) \overset{\mathrm{def}}{=} \big \{z \in \mathbb{X} \ \big| \ z=\mathcal{M}^{\mathcal{Q}} \big ( \bar{\mathcal{F}}(z) \big ) \big \}
\end{equation} 
\end{definition}

An immediate adaptation of~\cite[Theorem~6.2.8]{Sandholm2010Population-Game} to our context leads to the conclusion that the perturbed equilibrium can also be specified as the Nash equilibrium of the so-called \underline{virtual payoff} ${\tilde{\mathcal{F}}^{\mathcal{Q}}: \mathrm{int}(\mathbb{X}) \rightarrow \mathbb{R}^n}$ defined as:

\begin{equation}
\tilde{\mathcal{F}}^{\mathcal{Q}}(z) \overset{\mathrm{def}}{=}  \bar{\mathcal{F}}(z) - \nabla \mathcal{Q}(z), \quad z \in \mathrm{int}(\mathbb{X})
\end{equation} In summary, we can state the following:
\begin{equation}
\mathbb{NE}(\tilde{\mathcal{F}}^{\mathcal{Q}}) =  \mathbb{PE}(\bar{\mathcal{F}},\mathcal{Q})
\end{equation} 

\begin{remark} \label{remark:perturbed_stationarity} {\bf (Perturbed Stationarity)} It is an immediate consequence of (\ref{eq:EDMPBR}) that the PBR EDM satisfies the following equivalence also referred to as \underline{perturbed stationarity}:
\begin{equation}
\tilde{\mathcal{V}}^{\mathcal{Q}}(z,r) = 0 \Leftrightarrow z = \mathcal{M}^{\mathcal{Q}}(r) , \quad z \in \mathbb{X}, \ r \in \mathbb{R}^n
\end{equation}
\end{remark}

In addition, it follows from~\cite[Theorem~3.1]{Hofbauer2007Evolution-in-ga} that if $\bar{\mathcal{F}}$ is continuously differentiable and contractive then $\mathbb{PE}(\bar{\mathcal{F}},\mathcal{Q})$ is a singleton.

\subsection{$\delta$-passivity Characterization for PBR EDM}
The following proposition establishes $\delta$-passivity properties for a given PBR EDM, which will allow us to use Lemma~\ref{lem:MainLemma} to assert in Theorem~\ref{thm:PBRConvergence} sufficient conditions under which $\mathbb{PE}(\bar{\mathcal{F}},\mathcal{Q})$ is globally attractive and globally asymptotically stable.

\begin{proposition} 
\label{prop:DeltaPassivityPBR}
Consider that an admissible payoff perturbation $\mathcal{Q}$ is given, for which we define the following candidate $\delta$-storage function:
\begin{multline} \label{eq:s_pbr}
\mathcal{S}^{\text{PBR}}(z,r) \overset{\mathrm{def}}{=}  \max_{\bar{z} \in \mathrm{int}(\mathbb{X})} \big (\bar{z}^Tr-\mathcal{Q}(\bar{z}) \big ) - \big (z^Tr - \mathcal{Q}(z) \big ), \\  z \in \mathbb{X} , \ r \in \mathbb{R}^n
 \end{multline}
Let $\eta^*$ be the infimum of all nonnegative constants $\eta$ for which the following holds:
\begin{equation}
\label{eq:PositivityQetaPBREDM}
\tilde{z}^T \nabla^2 \mathcal{Q}(z) \tilde{z} \geq \eta\tilde{z}^T \tilde{z} , \quad z \in \mathrm{int}(\mathbb{X}), \ \tilde{z} \in \mathbb{TX}
\end{equation} One of the two cases holds:
\begin{itemize}
\item {\bf (Case~I)} If $\eta^* \geq 0$ then the PBR EDM is $\delta$-passive and $\mathcal{S}^{\text{PBR}}$ is an informative $\delta$-storage function. 
\item {\bf (Case~II)} If $\eta^* > 0 $ then the PBR EDM is $\delta$-passive with surplus $\eta^*$ and $\mathcal{S}^{\text{PBR}}$ is an informative $\delta$-storage function. 
\end{itemize}
\end{proposition}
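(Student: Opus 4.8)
\emph{Proof outline.} The plan is to rewrite the candidate storage function through the partial convex conjugate of $\mathcal{Q}$, read off its two partial gradients, and then obtain both informativeness conditions \eqref{eq:informative1}--\eqref{eq:informative2} and the $\delta$-passivity inequality \eqref{eq:DeltaPassive} from a single differentiation along trajectories. First I would set $W(r) \overset{\mathrm{def}}{=} \max_{\bar z \in \mathrm{int}(\mathbb{X})}\big(\bar z^T r - \mathcal{Q}(\bar z)\big)$, so that $\mathcal{S}^{\text{PBR}}(z,r) = W(r) - \big(z^T r - \mathcal{Q}(z)\big)$. Since $\mathcal{Q}$ is strictly convex and diverges at $\mathrm{bd}(\mathbb{X})$ by \eqref{eq:boundary}, the maximizer is the single interior point $\mathcal{M}^{\mathcal{Q}}(r)$, which is continuously differentiable by the implicit function theorem applied to its first-order condition (using that $\nabla^2\mathcal{Q}$ is positive definite on $\mathbb{TX}$; see also \S6.2 of \cite{Sandholm2010Population-Game}). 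Danskin's theorem then gives $\nabla_r W(r) = \mathcal{M}^{\mathcal{Q}}(r)$, so $\mathcal{S}^{\text{PBR}}$ is continuously differentiable with $\nabla_r \mathcal{S}^{\text{PBR}}(z,r) = \mathcal{M}^{\mathcal{Q}}(r) - z = \tilde{\mathcal{V}}^{\mathcal{Q}}(z,r)$ and $\nabla_z \mathcal{S}^{\text{PBR}}(z,r) = \nabla\mathcal{Q}(z) - r$. By construction $\mathcal{S}^{\text{PBR}} \ge 0$ with equality precisely when $z = \mathcal{M}^{\mathcal{Q}}(r)$, which together with perturbed stationarity (Remark~\ref{remark:perturbed_stationarity}) yields \eqref{eq:informative1}. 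A minor caveat is that $\mathcal{Q}$, and hence $\mathcal{S}^{\text{PBR}}$, must be extended to $+\infty$ on $\mathrm{bd}(\mathbb{X})$; this is harmless because $\dot x = \mathcal{M}^{\mathcal{Q}}(w) - x$ with $\mathcal{M}^{\mathcal{Q}}(w) \in \mathrm{int}(\mathbb{X})$ forces $x(t) \in \mathrm{int}(\mathbb{X})$ for every $t > 0$, where the computations below are valid.

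Next I would use the first-order optimality of $b \overset{\mathrm{def}}{=} \mathcal{M}^{\mathcal{Q}}(r)$: as an interior maximizer of $\bar z \mapsto \bar z^T r - \mathcal{Q}(\bar z)$ over $\mathbb{X}$, it satisfies $r - \nabla\mathcal{Q}(b) \perp \mathbb{TX}$. Since $\tilde{\mathcal{V}}^{\mathcal{Q}}(z,r) = b - z \in \mathbb{TX}$, that orthogonal component drops out, leaving
\[
\nabla_z^T\mathcal{S}^{\text{PBR}}(z,r)\,\tilde{\mathcal{V}}^{\mathcal{Q}}(z,r) = \big(\nabla\mathcal{Q}(z) - r\big)^T(b-z) = -\big(\nabla\mathcal{Q}(b) - \nabla\mathcal{Q}(z)\big)^T(b-z).
\]
Writing the right-hand side as $-\int_0^1 (b-z)^T\nabla^2\mathcal{Q}\big(z+s(b-z)\big)(b-z)\,\mathrm{d}s$ over the segment $[z,b]\subset\mathrm{int}(\mathbb{X})$ and applying \eqref{eq:PositivityQetaPBREDM} with $b-z\in\mathbb{TX}$, I would bound it above by $-\eta^*\|b-z\|^2 = -\eta^*\,\tilde{\mathcal{V}}^{\mathcal{Q}}(z,r)^T\tilde{\mathcal{V}}^{\mathcal{Q}}(z,r) \le 0$; strict convexity of $\mathcal{Q}$ along $\mathbb{TX}$ makes this strict whenever $b \ne z$, i.e. whenever $\tilde{\mathcal{V}}^{\mathcal{Q}}(z,r) \ne 0$, which is exactly \eqref{eq:informative2}.

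Finally I would establish \eqref{eq:DeltaPassive}. Along any solution of $\dot x = \tilde{\mathcal{V}}^{\mathcal{Q}}(x,w)$ with $w \in \mathfrak{P}$, the identity $\nabla_r\mathcal{S}^{\text{PBR}}(x,w) = \tilde{\mathcal{V}}^{\mathcal{Q}}(x,w) = \dot x$ gives
\[
\frac{\mathrm{d}}{\mathrm{d}t}\mathcal{S}^{\text{PBR}}\big(x(t),w(t)\big) = \dot{x}^T\dot{w} + \nabla_z^T\mathcal{S}^{\text{PBR}}(x,w)\,\tilde{\mathcal{V}}^{\mathcal{Q}}(x,w) \le \dot{x}^T\dot{w} - \eta^*\dot{x}^T\dot{x},
\]
where the inequality is the bound from the previous paragraph. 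Integrating over $[t_0,t]$ delivers \eqref{eq:DeltaPassive} with $\eta = \eta^*$. In Case~I ($\eta^*\ge 0$) the $\eta=0$ instance follows a fortiori, so the PBR EDM is $\delta$-passive with informative $\delta$-storage function $\mathcal{S}^{\text{PBR}}$; in Case~II ($\eta^* > 0$) the inequality holds with the positive constant $\eta^*$, so the PBR EDM is $\delta$-passive with surplus $\eta^*$.

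The step I expect to be the main obstacle is the regularity analysis underlying the first paragraph: showing that $W$ is continuously differentiable with $\nabla_r W = \mathcal{M}^{\mathcal{Q}}$ (hence that $\mathcal{S}^{\text{PBR}}$ is a bona fide continuously differentiable $\delta$-storage function), together with the attendant bookkeeping at $\mathrm{bd}(\mathbb{X})$ since $\mathcal{Q}$ is defined only on $\mathrm{int}(\mathbb{X})$ and $\mathcal{S}^{\text{PBR}}$ is nominally $\mathbb{R}_+$-valued. Once that is in place, the rest is routine: the first-order condition for $\mathcal{M}^{\mathcal{Q}}$ collapses the gradient-difference term onto $\mathbb{TX}$, and (strong) convexity of $\mathcal{Q}$ along $\mathbb{TX}$ supplies both the sign and the surplus $\eta^*$. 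The convenient structural feature is that $\nabla_r\mathcal{S}^{\text{PBR}}$ coincides with the PBR vector field $\tilde{\mathcal{V}}^{\mathcal{Q}}$, so the cross term $\dot x^T\dot w$ in \eqref{eq:DeltaPassive} appears automatically rather than having to be engineered.
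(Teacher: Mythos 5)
Your proposal is correct and follows essentially the same route as the paper's proof: identify $\nabla_r\mathcal{S}^{\text{PBR}}(z,r)=\mathcal{M}^{\mathcal{Q}}(r)-z=\tilde{\mathcal{V}}^{\mathcal{Q}}(z,r)$, use the interior first-order condition for $\mathcal{M}^{\mathcal{Q}}(r)$ to rewrite $\nabla_z^T\mathcal{S}^{\text{PBR}}\,\tilde{\mathcal{V}}^{\mathcal{Q}}$ as $-\big(\nabla\mathcal{Q}(\mathcal{M}^{\mathcal{Q}}(r))-\nabla\mathcal{Q}(z)\big)^T\big(\mathcal{M}^{\mathcal{Q}}(r)-z\big)$, and conclude via (strong) convexity of $\mathcal{Q}$ on $\mathbb{TX}$. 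The only cosmetic differences are that you invoke Danskin's theorem and integrate the dissipation inequality directly along trajectories, whereas the paper cites the gradient identities from Hofbauer--Sandholm and packages the same two conditions through its Lemma~\ref{lemma:passivity_edm}.
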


A proof of Proposition \ref{prop:DeltaPassivityPBR}  is given in Appendix~\ref{appendix_b}. Notice that~\cite[Theorem~3.1]{Hofbauer2007Evolution-in-ga} uses a Lyapunov function that is analogous to $\mathcal{S}^{\textit{PBR}}$ to establish convergence results in the framework of contractive population games. 

\begin{example} \label{ex:logit}{\bf (Logit EDM)} The Logit EDM is specified by the following protocol\footnote{See also \cite{Hofbauer2007Evolution-in-ga, Hofbauer2002On-the-global-c}.}:
\begin{equation}
  \mathcal{C}_i^{\mathcal{Q}}(r) \overset{\mathrm{def}}{=} \frac{e^{\eta^{-1}r_i}}{\sum_{j=1}^n e^{\eta^{-1}r_j}}, \quad r \in \mathbb R^n
\end{equation}
where $\eta$ is a positive constant. The following is the associated $\delta$-storage function, which is informative:
\begin{multline} \label{eq:s_logit}
\mathcal{S}^{\text{Logit}}(z,r) = \max_{\bar{z} \in \mathrm{int}(\mathbb{X})} \big (\bar{z}^Tr-\mathcal{Q}(\bar{z}) \big ) - \big (z^Tr - \mathcal{Q}(z) \big ), \\  z \in \mathbb{X} , \ r \in \mathbb{R}^n
\end{multline}
where the payoff perturbation $\mathcal Q$ is given by $\mathcal Q(z) = \eta\sum_{i=1}^n z_i \ln z_i$. The parameter $\eta$ is referred to as the noise level \cite{Hofbauer2007Evolution-in-ga}.
\end{example}

\subsection{Global Convergence to Perturbed Equilibria for PBR EDM}

At this point, we have defined all the key concepts and presented the preliminary results required to state our main theorem establishing the conditions under which we can guarantee global convergence to $\mathbb{PE}(\bar{\mathcal{F}},\mathcal{Q})$.

\noindent \rule{\columnwidth}{1pt}
\begin{theorem} 
\label{thm:PBRConvergence}
Consider a mean closed loop model formed by a $\delta$-passive PBR EDM characterized by an admissible payoff perturbation $\mathcal{Q}$ and a PDM with a continuously differentiable $\bar{\mathcal{F}}$. One of the following two cases holds:
\begin{itemize}
	\item {\bf (Case~I)} If the PDM is weak $\delta$-antipassive then $\mathbb{PE}(\bar{\mathcal{F}},\mathcal{Q})$ is globally attractive. If the PDM is $\delta$-antipassive then $\mathbb{PE}(\bar{\mathcal{F}},\mathcal{Q})$ is globally asymptotically stable.
	\item {\bf (Case~II)} If the PDM is weak $\delta$-antipassive with positive deficit $\nu^*$ and the PBR EDM is $\delta$-passive with surplus $\eta^*>\nu^*$ then $\mathbb{PE}(\bar{\mathcal{F}},\mathcal{Q})$ is globally attractive. If the PDM is $\delta$-antipassive with positive deficit $\nu^*$ and the PBR EDM is $\delta$-passive with surplus $\eta^*>\nu^*$ then $\mathbb{PE}(\bar{\mathcal{F}}, \mathcal Q)$ is globally asymptotically stable.
\end{itemize}

\end{theorem}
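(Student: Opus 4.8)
The plan is to derive all four sub-claims from Proposition~\ref{prop:DeltaPassivityPBR} and Lemma~\ref{lem:MainLemma}, and then convert the energy bound supplied by the latter into the two limits of Definition~\ref{def:stability}, in the same spirit in which Lemma~\ref{lem:NashStationMainLemma} handles the Nash-stationary EDM classes. First I would apply Proposition~\ref{prop:DeltaPassivityPBR}: admissibility of $\mathcal{Q}$ makes $\mathcal{S}^{\text{PBR}}$ an informative $\delta$-storage function and the PBR EDM $\delta$-passive, with surplus $\eta^*$ when $\eta^*>0$. In Case~I, where the PDM is weak $\delta$-antipassive, Case~I of Lemma~\ref{lem:MainLemma} applies with $\mathcal{S}=\mathcal{S}^{\text{PBR}}$ and $\eta=\nu=0$; in Case~II, where the PDM is weak $\delta$-antipassive with deficit $\nu^*>0$ (a $\delta$-antipassive PDM being in particular weak $\delta$-antipassive with the same deficit) and the PBR EDM is $\delta$-passive with surplus $\eta^*>\nu^*$, Case~II of Lemma~\ref{lem:MainLemma} applies. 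In either situation, $\lim_{t\to\infty}\mathcal{S}^{\text{PBR}}(x(t),p(t))=0$ for every $(q(0),x(0))\in\mathbb{R}^n\times\mathbb{X}$.

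The second step is to upgrade this to $\lim_{t\to\infty}\|\dot x(t)\|=0$. Since $\dot x(t)=\tilde{\mathcal{V}}^{\mathcal{Q}}(x(t),p(t))=\mathcal{M}^{\mathcal{Q}}(p(t))-x(t)$ and $p\in\mathfrak{P}$ is bounded (Proposition~\ref{prop:uniquesoln}), the coercivity condition~(\ref{eq:boundary}) keeps $\mathcal{M}^{\mathcal{Q}}(p(t))$, hence $x(t)$ for large $t$, inside a fixed compact subset of $\mathrm{int}(\mathbb{X})$; on such a set $\mathcal{S}^{\text{PBR}}(\cdot,r)$ is continuous and, by strict concavity of $z\mapsto z^Tr-\mathcal{Q}(z)$, it vanishes exactly at $z=\mathcal{M}^{\mathcal{Q}}(r)$. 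In Case~II the strong-convexity bound~(\ref{eq:PositivityQetaPBREDM}) yields $\|\dot x(t)\|^2\le \tfrac{2}{\eta^*}\mathcal{S}^{\text{PBR}}(x(t),p(t))\to0$ directly; in Case~I I would argue via subsequences, using compactness of the trajectory and continuity of $\mathcal{M}^{\mathcal{Q}}$ to show that every subsequence of $\dot x(t)$ has a further subsequence converging to $0$. With $\dot x(t)\to0$, Assumption~\ref{assump:StationaryGame} (applied with $u=x$) gives $\|p(t)-\bar{\mathcal{F}}(x(t))\|\to0$, the second limit of global attractiveness; and since $x(t)-\mathcal{M}^{\mathcal{Q}}(p(t))\to0$, combining the two limits with continuity of $\mathcal{M}^{\mathcal{Q}}\!\circ\bar{\mathcal{F}}$ gives $x(t)-\mathcal{M}^{\mathcal{Q}}(\bar{\mathcal{F}}(x(t)))\to0$, so the $\omega$-limit set of $x(\cdot)$ lies in $\mathbb{PE}(\bar{\mathcal{F}},\mathcal{Q})$ and $\inf_{z\in\mathbb{PE}(\bar{\mathcal{F}},\mathcal{Q})}\|x(t)-z\|\to0$. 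This proves global attractiveness in both cases.

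For the global asymptotic stability claims the PDM is additionally $\delta$-antipassive (possibly with deficit $\nu^*$), hence admits a continuously differentiable $\delta$-antistorage function $\mathcal{L}$, and I would use
\[ V(q,x)\overset{\mathrm{def}}{=}\mathcal{S}^{\text{PBR}}\bigl(x,\mathcal{H}(q,x)\bigr)+\mathcal{L}(x,q) \]
as a Lyapunov function on $\mathbb{R}^n\times\mathbb{X}$. Adding the EDM inequality~(\ref{eq:DeltaPassive}) to the PDM inequality~(\ref{eq:2ndDAPAssivePDM}) along the closed-loop trajectory (with $w=p$, $u=x$), the cross terms $\dot x^T\dot p$ cancel, leaving $V(q(t),x(t))-V(q(0),x(0))\le(\nu-\eta)\int_0^t\|\dot x(\tau)\|^2\,\mathrm d\tau$; choosing $\nu=\eta=0$ in Case~I and $\nu^*<\nu<\eta<\eta^*$ in Case~II makes $\nu-\eta\le0$, so $V$ is non-increasing along trajectories. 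Since $V$ is continuous and nonnegative and, by~(\ref{eq:1stDAPAssivePDM}) together with the characterization of $\mathbb{PE}(\bar{\mathcal{F}},\mathcal{Q})$ as $\{z\in\mathbb{X} \mid \mathcal{S}^{\text{PBR}}(z,\bar{\mathcal{F}}(z))=0\}$, it vanishes exactly on the set $\mathbb{A}$ of Definition~\ref{def:stability}, which lies in $\mathbb{R}^n\times\mathrm{int}(\mathbb{X})$ and is compact or all of $\mathbb{R}^n\times\mathbb{X}$ by Assumption~\ref{assump:StationaryGame}, a standard sublevel-set argument gives Lyapunov stability of $\mathbb{A}$; together with global attractiveness this yields global asymptotic stability.

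I expect the main obstacle to be the implication $\mathcal{S}^{\text{PBR}}(x(t),p(t))\to0\Rightarrow\dot x(t)\to0$ in Case~I, where no positive $\delta$-passivity surplus is available: one must rule out $x(t)$ drifting toward $\mathrm{bd}(\mathbb{X})$, where $\mathcal{Q}$ and $\mathcal{S}^{\text{PBR}}$ lose smoothness, and then extract a uniform modulus of strict concavity on the resulting compact set, the coercivity property~(\ref{eq:boundary}) of an admissible perturbation being precisely what makes this possible. A secondary technical point is verifying the regularity of $\mathcal{S}^{\text{PBR}}$ and $\mathcal{M}^{\mathcal{Q}}$ near $\mathbb{A}$ and handling the compact-versus-entire-space dichotomy of Assumption~\ref{assump:StationaryGame} in the Lyapunov-stability step.
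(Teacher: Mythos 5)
Your proposal is correct and follows essentially the same route as the paper: Proposition~\ref{prop:DeltaPassivityPBR} supplies the informative $\delta$-storage function $\mathcal{S}^{\text{PBR}}$, Lemma~\ref{lem:MainLemma} gives $\mathcal{S}^{\text{PBR}}(x(t),p(t))\to 0$, and the remaining attractiveness and Lyapunov-stability steps (compactness of the perturbed-stationarity set, Assumption~\ref{assump:StationaryGame}, and the combined function $\mathcal{S}^{\text{PBR}}(x,\mathcal{H}(q,x))+\mathcal{L}(x,q)$) mirror the argument of Lemma~\ref{lem:NashStationMainLemma}, which is exactly what the paper's proof invokes "by analogy". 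Your write-up in fact fills in details (the $\dot x\to 0$ step and the deficit/surplus bookkeeping $\nu^*<\nu<\eta\le\eta^*$ in Case~II) that the paper omits for brevity.
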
 
\noindent \rule{\columnwidth}{1pt}

A proof of Theorem \ref{thm:PBRConvergence}  is given in Appendix \ref{appendix_b}.

\section{Smoothing-Anticipatory PDM: \\Definition and $\delta$-antipassivity properties}
\label{sec:AnticipatoryPDM}

In this section, we study the class of so-called smoothing-anticipatory PDM, which extends both the anticipatory and smoothing modified payoff dynamics, which were considered in~\cite{Shamma2005Dynamic-fictiti,Fox2013Population-Game,Arslan2006Anticipatory-le,Chasparis2012Distributed-dyn} to account for learning dynamics~\cite{Shamma2019Game-theory-lea}. 

We start with defining the smoothing-anticipatory PDM class in a way that is consistent with our formulation. Subsequently, in \S\ref{subsec:AnticipPDMAffine} and \S\ref{subsec:AnticipPDMPotential},  we establish sufficient conditions under which a smoothing-anticipatory PDM is $\delta$-antipassive for the case when the stationary population game is potential.



\begin{definition} {\bf (Smoothing-Anticipatory PDM)} Consider that ${\mathcal{F}:\mathbb{X} \rightarrow \mathbb{R}^n}$ is a given continuously differentiable map defining a population game. Given a positive constant $\alpha$ and nonnegative parameters $\mu_0$, $\mu_1$, and $\mu_2$ satisfying $\mu_0+\mu_1=1$, the associated \underline{smoothing-anticipatory} PDM is defined as follows:
\begin{subequations}
\label{eq:SmoothingPDM}
\begin{align} \label{eq:SmoothingPDMa}
\dot{q}(t) =  & \alpha \Big( \mathcal{F} \big ( u(t) \big) - q(t) \Big) \\ \label{eq:SmoothingPDMb}
p(t) = &  \mu_0 \mathcal{F}\big ( u(t) \big) + \mu_1 q(t) + \mu_2 \dot{q}(t)
\end{align}
\end{subequations} for $t \geq 0$,  $q(0) \in \mathbb{R}^n$, and  $u \in \mathfrak{X}$. 
\end{definition} 

In order to show that~(\ref{eq:SmoothingPDMb}) complies with~(\ref{eq:PDM}), it suffices to notice that we can substitute the expression~(\ref{eq:SmoothingPDMa}) for $\dot{q}(t)$ to get the following alternative formula for $p(t)$:
$$ p(t) = (\mu_0 + \alpha\mu_2) \mathcal{F} \big ( u(t) \big) + (\mu_1 - \alpha \mu_2)q(t) $$

\begin{remark} \label{remark:smoothing_anticipatory_pdm} The following are parameter choices leading to existing PDM types:
\begin{itemize}
	\item When $\mu_0=1$ and $\mu_1=\mu_2=0$ the PDM is \underline{memoryless}.
	\item When $\mu_0=1$, $\mu_1=0$, and $\mu_2 >0$ we obtain an \underline{anticipatory PDM}, as considered in~\cite{Shamma2005Dynamic-fictiti,Fox2013Population-Game,Arslan2006Anticipatory-le,Chasparis2012Distributed-dyn}.
	\item The \underline{smoothing PDM} considered in~\cite{Fox2013Population-Game} is obtained when $\mu_0=0$, $\mu_1=1$, and $\mu_2=0$.
\end{itemize} 
\end{remark}

The following proposition guarantees that any smoothing-anticipatory PDM satisfies Assumptions~\ref{assump:BoundedPDM} and~\ref{assump:StationaryGame}. 

\begin{proposition}
\label{prop:SmoothingAnticipPDMSatisfyAssump} Let ${\mathcal{F}:\mathbb{X} \rightarrow \mathbb{R}^n}$ be a given continuously differentiable map defining a population game. Given positive $\alpha$ and nonnegative $\mu_0$, $\mu_1$, and $\mu_2$ satisfying $\mu_0+\mu_1=1$, the associated smoothing-anticipatory PDM satisfies Assumption~\ref{assump:BoundedPDM} (boundedness). It also satisfies Assumption~\ref{assump:StationaryGame}, and the stationary  population game is $\bar{\mathcal{F}} = \mathcal{F}$.
\end{proposition}
\begin{proof} We start by writing the explicit solution of (\ref{eq:SmoothingPDM}) for a given input $u$ in $\mathfrak{X}$ and $q(0)$ in $\mathbb{R}^n$:
\begin{equation}
\label{eq:smoothSoln}
q(t) = \alpha \int_0^t e^{-\alpha(t-\tau)} \mathcal{F}\big( u(\tau) \big ) \, \mathrm d \tau + e^{-\alpha t} q(0), \quad t \geq 0
\end{equation} Since $\mathcal{F}$ is continuous, we conclude from~(\ref{eq:smoothSoln}) that the following inequality holds and the right-hand side is finite:
\begin{equation}
\|q\| \leq \max_{z \in \mathbb{X}} \|\mathcal{F}(z) \| + \| q(0) \|,
\end{equation} which implies that the \underline{PDM is bounded} in the sense of Assumption~\ref{assump:BoundedPDM}. In order to prove that $\bar{\mathcal{F}} = \mathcal{F}$, we use a Lyapunov-like argument based on the following function:
\begin{equation}
\mathcal{L}(z,s) = \frac{1}{2\alpha} \big (\mathcal{F}(z)-s \big )^T \big (\mathcal{F}(z)-s \big ), \quad z \in \mathbb{X}, \ s \in \mathbb{R}^n
\end{equation} Now, we can use this to calculate the following derivative:
\begin{multline}
\frac{\mathrm d}{\mathrm d t} \mathcal{L}(u(t), q(t)) = - 2\alpha \mathcal{L}(u(t), q(t)) \\ + \frac{1}{\alpha} \Big (\mathcal{F} \big( u(t) \big )-q(t) \Big )^T D\mathcal{F}\big( u(t) \big) \dot{u}(t)
\end{multline}

Since $\mathcal{F}$ is continuously differentiable, $\|u\| \leq m$, and, as we proved above, $q$ is bounded, we conclude that \underline{when $\dot{u}(t)$ tends to zero} the second term on the right-hand side above vanishes, which implies that the following limit holds:
\begin{equation}
\label{eq:limitstationarygameforq}
\lim_{t \rightarrow \infty} \|q(t) - \mathcal{F} \big (u(t) \big ) \| = 0
\end{equation}
As a result, from~(\ref{eq:SmoothingPDMa}) we infer that $\dot{q}$ tends to zero and from~(\ref{eq:SmoothingPDMb}) we conclude that $\lim_{t \rightarrow \infty} \|p(t) - \mu_0\mathcal{F}\big (u(t) \big ) - \mu_1 q(t)\| =0$. Finally, using this fact, that $\mu_0+\mu_1=1$, and (\ref{eq:limitstationarygameforq}) we infer that $\lim_{t \rightarrow \infty} \|p(t) - \mathcal{F} \big (u(t) \big ) \| = 0$.

Also, we note that the set $\{(z,s) \in \mathbb X \times \mathbb R^n \,|\, \mathcal H(z,s) = \mathcal F(z) \}$ is equivalent to the compact subset $\{(z,s) \in \mathbb X \times \mathbb R^n \,|\, s = \mathcal F(z) \}$ if $\mu_1 \neq \alpha \mu_2$, or the entire set $\mathbb X \times \mathbb R^n$ otherwise. This means that Assumption \ref{assumption:stationary_game} is satisfied and the stationary population game is well-defined and is given by $\bar{\mathcal{F}} = \mathcal{F}$.
\end{proof}

\subsection{Smoothing-Anticipatory PDM when $\mathcal{F}$ is Potential Affine}
\label{subsec:AnticipPDMAffine}

 Notice that $\delta$-antipassivity for the classes of anticipatory and smoothing PDM was studied separately in~\cite{Fox2013Population-Game} for the case when $\mathcal{F}$ is affine, potential, and strictly contractive. Below, in Proposition~\ref{prop:SmoothingAnticipatoryPDMStability}, we determine sufficient conditions for weak $\delta$-antipassivity for smoothing-anticipatory~PDM associated with affine potential $\mathcal{F}$ that is not required to be strictly contractive. 

\begin{definition}{\bf (Projection Matrix $\Phi$)}
Henceforth, we will use a projection matrix $\Phi$ in $\mathbb{R}^{n \times n}$ defined as follows:
\begin{equation}
\Phi_{ij} = \begin{cases} \frac{n-1}{n} & \text{if $i=j$} \\ - \frac{1}{n} & \text{otherwise} \end{cases}
\end{equation}
\end{definition} 

\begin{definition} Given a real symmetric matrix $M$, the largest eigenvalue of $M$ is represented as $\bar{\lambda}(M)$.
\end{definition}

\begin{proposition} 
\label{prop:SmoothingAnticipatoryPDMStability}
Let $\mathcal{F}$ be an affine population game specified as follows:
\begin{equation}
\label{eq:AffineGame}
\mathcal{F}(z) = Fz+\bar{r}, \quad z \in \mathbb{X}
\end{equation} where $F \in \mathbb{R}^{n \times n}$ is such that $\Phi F \Phi$ is symmetric\footnote{This implies that $\mathcal{F}$ is a potential population game. See~\cite{Sandholm2010Population-Game} for more details.},  and $\bar{r}$ is a constant vector in $\mathbb{R}^n$. Consider that $\mathcal{F}$, a given positive $\alpha$, and nonnegative $\mu_0$, $\mu_1$, and $\mu_2$ satisfying $\mu_0+\mu_1=1$ define a smoothing-anticipatory PDM. Let $\lambda^*$ be selected as: $$\lambda^* = \bar{\lambda}\big( \Phi F \Phi) $$ 
The PDM satisfies the following: 
\begin{itemize}
	\item[i)] If $\lambda^* = 0$  then the PDM is weak $\delta$-antipassive.
	\item[ii)] If $\lambda^* >0$ and $\mu_0+\alpha \mu_2 \leq 1$ then the PDM is weak $\delta$-antipassive with deficit $\lambda^*$.
	\item[iii)] If $\lambda^* >0$ and $\mu_0+\alpha \mu_2 > 1$ then the PDM is weak $\delta$-antipassive with deficit $(\mu_0+\alpha \mu_2) \lambda^*$.
\end{itemize}
\end{proposition}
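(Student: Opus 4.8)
The plan is to verify the weak $\delta$-antipassivity inequality~(\ref{eq:WDAntipPDM}) directly, by building an ``error energy'' whose decay dominates $\dot p^T\dot u-\nu\dot u^T\dot u$. First I would change variables to the tracking error $e:=\mathcal F(u)-q$. Then~(\ref{eq:SmoothingPDM}) becomes $\dot q=\alpha e$, $\dot e=F\dot u-\alpha e$, and, abbreviating $a:=\mu_0+\alpha\mu_2$ and $b:=\mu_1-\alpha\mu_2$ (so $a+b=\mu_0+\mu_1=1$), the output and its derivative are $p=\mathcal F(u)-b\,e$ and $\dot p=aF\dot u+\alpha b\,e$. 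Because $u\in\mathfrak X$ forces $\dot u(\tau)\in\mathbb{TX}$, only the $\mathbb{TX}$-component of $\dot p$ enters $\dot p^T\dot u$; writing $A:=\Phi F\Phi$ (symmetric by hypothesis, with $\bar\lambda(A)=\lambda^*$) and $\tilde e:=\Phi e\in\mathbb{TX}$ and using $\Phi\dot u=\dot u$, one obtains
\[
\dot{\tilde e}=A\dot u-\alpha\tilde e,\qquad \dot p^T\dot u=a\,\dot u^TA\dot u+\alpha b\,\tilde e^T\dot u .
\]

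Next I would seek a symmetric positive-definite weight $K$ on $\mathbb{TX}$ commuting with $A$ and set $V:=\tfrac12\tilde e^TK\tilde e\ge 0$. Differentiating along trajectories and substituting $\dot{\tilde e}=A\dot u-\alpha\tilde e$, the desired dissipation inequality $-\dot V\ge\dot p^T\dot u-\nu\dot u^T\dot u$ is equivalent to nonnegativity of the quadratic form
\[
\alpha\,\tilde e^TK\tilde e+\nu\,\dot u^T\dot u-a\,\dot u^TA\dot u-\tilde e^T\!\big(\alpha b\,I+KA\big)\dot u\ \ge\ 0,\qquad \tilde e,\dot u\in\mathbb{TX}.
\]
Since $A$ and $K$ are simultaneously diagonalizable on $\mathbb{TX}$, this decouples over the eigenvalues $\lambda_i$ of $A|_{\mathbb{TX}}$ into scalar conditions requiring the $2\times2$ matrix with diagonal $(\alpha\kappa_i,\ \nu-a\lambda_i)$ and off-diagonal entry $-\tfrac12(\alpha b+\kappa_i\lambda_i)$ to be positive semidefinite, where $\kappa_i>0$ are the eigenvalues of $K$. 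Minimizing the resulting bound over the free $\kappa_i$ (the minimizer is $\kappa_i=\alpha|b|/|\lambda_i|$ when $\lambda_i\neq0$, and $\kappa_i\to\infty$ when $\lambda_i=0$), feasibility holds precisely when $\nu\ge a\lambda_i+\tfrac12 b\lambda_i+\tfrac12|b|\,|\lambda_i|$. Using $a+b=1$, this threshold equals $\lambda_i$ when $\lambda_i$ and $b$ share sign and $a\lambda_i$ otherwise, so its maximum over $i$ is $\lambda^*$ when $b\ge0$, $a\lambda^*$ when $b<0$, and $\le 0$ when $\lambda^*=0$.

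Finally, integrating $-\dot V\ge\dot p^T\dot u-\nu\dot u^T\dot u$ on $[0,t]$ gives $\int_0^t(\dot p^T\dot u-\nu\dot u^T\dot u)\,\mathrm d\tau\le V(u(0),q(0))-V(u(t),q(t))\le V(u(0),q(0))$, and since $\mathbb X$ is compact the choice $\mathcal A(q(0),\cdot):=\sup_{z\in\mathbb X}\tfrac12\big(\Phi(\mathcal F(z)-q(0))\big)^TK\,\Phi(\mathcal F(z)-q(0))$ is finite and nonnegative and makes~(\ref{eq:WDAntipPDM}) hold. Translating the three thresholds: $\lambda^*=0$ gives weak $\delta$-antipassivity (case i); $b\ge0$, i.e.\ $\mu_0+\alpha\mu_2\le 1$, gives deficit $\lambda^*$ (case ii); $b<0$, i.e.\ $\mu_0+\alpha\mu_2>1$, gives deficit $(\mu_0+\alpha\mu_2)\lambda^*$ (case iii). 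The degenerate directions — the kernel of $\Phi F\Phi$ in $\mathbb{TX}$, and the boundary case $\mu_1=\alpha\mu_2$ where $V\equiv0$ — are handled by a direct estimate using the exponential decay $\tilde e(t)=e^{-\alpha t}\tilde e(0)+\int_0^t e^{-\alpha(t-\tau)}A\dot u(\tau)\,\mathrm d\tau$, which contributes only a term of the form $c(q(0))\|\dot u\|$ to $\mathcal A$, as permitted by Definition~\ref{def:WDAntipPDM}. The main obstacle is the middle step: a scalar weight $K=\kappa I$ does not suffice when $\Phi F\Phi$ has a large-magnitude negative eigenvalue, so one must choose a genuinely matrix-valued $K$, and then verify that the optimal choice returns exactly the deficits $\lambda^*$ and $(\mu_0+\alpha\mu_2)\lambda^*$ rather than something larger.
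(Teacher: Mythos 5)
Your proposal is correct, and it reaches the stated deficits by a genuinely different route than the paper. The paper splits the output into homogeneous and forced parts, absorbs the homogeneous part into $\mathcal{A}(q(0),\|\dot u\|)$, and treats the forced part as an LTI system with transfer matrix $\tfrac{\alpha+\bar\mu s}{\alpha+s}F$, $\bar\mu=\mu_0+\alpha\mu_2$; Parseval's theorem then reduces weak $\delta$-antipassivity to the frequency-sweep inequality $z^TFz\le\nu\,\tfrac{\alpha^2+\omega^2}{\alpha^2+\bar\mu\omega^2}\,z^Tz$ on $\mathbb{TX}$, from which the dichotomy $\bar\mu\le1$ versus $\bar\mu>1$ (deficit $\lambda^*$ versus $\bar\mu\lambda^*$) is read off directly. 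You instead work entirely in the time domain: with $a=\mu_0+\alpha\mu_2$, $b=\mu_1-\alpha\mu_2$, the identities $\dot p^T\dot u=a\,\dot u^TA\dot u+\alpha b\,\tilde e^T\dot u$ and $\dot{\tilde e}=A\dot u-\alpha\tilde e$ are correct, and your per-eigenvalue $2\times2$ feasibility condition with the optimized weight $\kappa_i=\alpha|b|/|\lambda_i|$ indeed yields the threshold $a\lambda_i+\tfrac12 b\lambda_i+\tfrac12|b||\lambda_i|$, whose maximum reproduces exactly $\lambda^*$ when $b\ge0$ and $a\lambda^*$ when $b<0$; integrating $-\dot V\ge\dot p^T\dot u-\nu\dot u^T\dot u$ and bounding $V(u(0),q(0))$ over the compact $\mathbb{X}$ gives an admissible $\mathcal{A}$. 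Your separate treatment of the degenerate directions is essential and sound: on $\ker(\Phi F\Phi)\cap\mathbb{TX}$ the projected error obeys $\dot{\tilde e}_0=-\alpha\tilde e_0$ with no forcing, so its contribution $\alpha b\,\tilde e_0^T\dot u$ integrates to a term of the form $c(q(0))\|\dot u\|$, which is precisely the role played by the homogeneous response in the paper's proof (and is the only place case i with $b\neq0$ cannot be covered by the pointwise quadratic form, since that form fails at $\lambda_i=0$, $\nu=0$). In exchange for being somewhat longer, your argument avoids the Parseval/frequency-domain step (whose application to finite-horizon integrals the paper glosses over) and produces an explicit storage-type certificate $V=\tfrac12\tilde e^TK\tilde e$; the paper's argument is shorter and makes the origin of the two deficit regimes more transparent through the frequency-response factor $\tfrac{\alpha^2+\omega^2}{\alpha^2+\bar\mu\omega^2}$.
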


A proof of Proposition \ref{prop:SmoothingAnticipatoryPDMStability}  is given in Appendix \ref{appendix_c}.


\begin{proposition} If, in addition to the conditions of Proposition~\ref{prop:SmoothingAnticipatoryPDMStability}, a PDM is specified by a symmetric and negative definite $F$ then it is $\delta$-antipassive. 
\end{proposition}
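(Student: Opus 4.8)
The plan is to construct an explicit, continuously differentiable $\delta$-antistorage function that is quadratic in the ``tracking error'' of the PDM. First I would fix notation: with $a := \mu_0 + \alpha\mu_2$, $b := \mu_1 - \alpha\mu_2$ (so that $a + b = \mu_0 + \mu_1 = 1$) and $e(t) := \mathcal{F}(u(t)) - q(t)$, equations (\ref{eq:SmoothingPDM}) give $\dot q = \alpha e$, $p = \mathcal{F}(u) - b\,e$, $\dot e = F\dot u - \alpha e$, hence $\dot p = a\,F\dot u + \alpha b\,e$, and $\mathcal{H}(s,z) = a\mathcal{F}(z) + b\,s$, so that $\mathcal{H}(s,z) = \mathcal{F}(z) \Leftrightarrow b\big(s - \mathcal{F}(z)\big) = 0$. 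By Proposition~\ref{prop:SmoothingAnticipPDMSatisfyAssump} the stationary game is $\bar{\mathcal{F}} = \mathcal{F}$, so (\ref{eq:1stDAPAssivePDM}) asks for an $\mathcal{L}$ with $\mathcal{L}(z,s) = 0 \Leftrightarrow b(s - \mathcal{F}(z)) = 0$.

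For the nondegenerate case $b \neq 0$, the candidate is
$$\mathcal{L}(z,s) \;:=\; \tfrac{\alpha|b|}{2}\,\big(\mathcal{F}(z) - s\big)^{T}(-F)^{-1}\big(\mathcal{F}(z) - s\big), \qquad z \in \mathbb{X},\ s \in \mathbb{R}^n,$$
which is continuously differentiable and $\mathbb{R}_+$-valued because $-F$ is symmetric positive definite, and whose zero set is $\{s = \mathcal{F}(z)\}$, which for $b \neq 0$ coincides with $\{b(s - \mathcal{F}(z)) = 0\}$; hence (\ref{eq:1stDAPAssivePDM}) holds. To get (\ref{eq:2ndDAPAssivePDM}) with $\nu = 0$ it suffices, by the fundamental theorem of calculus, to show $\tfrac{d}{dt}\mathcal{L}(u(t),q(t)) + \dot p^{T}\dot u \le 0$ along every trajectory. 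Writing $M := (-F)^{-1}$ and $c := \alpha|b|$ (so $MF = -I$), this left-hand side equals the quadratic form $Q(\dot u, e) := a\,\dot u^{T}F\dot u + e^{T}(cMF + \alpha b I)\dot u - c\alpha\,e^{T}Me$ on $\mathbb{R}^{n}\times\mathbb{R}^{n}$, and I would verify $Q \le 0$. When $b > 0$ one has $cMF + \alpha bI = (\alpha b - c)I = 0$, so the symmetric matrix of $Q$ is block diagonal with blocks $aF \preceq 0$ and $-c\alpha M \prec 0$. When $b < 0$ one has $cMF + \alpha bI = 2\alpha b I$, and since $-c\alpha M \prec 0$ a single Schur complement reduces $Q \preceq 0$ to $aF - \alpha^{2}b^{2}(-c\alpha M)^{-1} \preceq 0$; with $M^{-1} = -F$ and $c = -\alpha b$ this Schur complement simplifies to $(a - |b|)F = (a + b)F = F \prec 0$. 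Either way $Q \le 0$, which yields (\ref{eq:2ndDAPAssivePDM}). Finally, the degenerate case $b = 0$ forces $a = 1$, hence $p \equiv \mathcal{F}(u)$ and $\mathcal{H}(s,z) \equiv \mathcal{F}(z)$; then $\mathcal{L} \equiv 0$ trivially satisfies (\ref{eq:1stDAPAssivePDM}), and (\ref{eq:2ndDAPAssivePDM}) reduces to $0 \ge \int_0^t \dot u^{T}F\dot u\,\mathrm d\tau$, which holds because $F \prec 0$.

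The crux — and the one genuinely creative step — is the choice of weighting matrix $(-F)^{-1}$ in $\mathcal{L}$: it is precisely this choice that makes $F$, $FMF$, and $M^{-1}$ all proportional to $F$, so the matrix inequality $Q \preceq 0$ collapses to the scalar bound $|b| \le 2a + b$, which is then immediate from $a + b = 1$ and $a, \mu_0, \mu_2 \ge 0$ (it reads $a \ge 0$ when $b \ge 0$ and $a + b \ge 0$ when $b < 0$). A subtlety worth flagging is that, unlike the weak $\delta$-antipassivity conclusion of Proposition~\ref{prop:SmoothingAnticipatoryPDMStability} (which only produces a map $\mathcal{A}$), the present claim requires a genuinely continuously differentiable antistorage function whose zero set is exactly the one dictated by (\ref{eq:1stDAPAssivePDM}); the quadratic $\mathcal{L}$ above meets both requirements because $(-F)^{-1}$ is positive definite. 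Negative definiteness of $F$ — rather than mere contractivity, i.e. $\Phi F\Phi \preceq 0$ — is what guarantees $(-F)^{-1}$ is well defined and positive definite and what lets all the relevant forms be handled as definite on all of $\mathbb{R}^n$.
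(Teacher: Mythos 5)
Your proof is correct and takes essentially the same route as the paper: the paper's $\delta$-antistorage function is $\mathcal{L}(z,s) = -(Fz+\bar{r}-s)^T F^{-1}(Fz+\bar{r}-s)$, which is exactly your quadratic in $\mathcal{F}(z)-s$ weighted by $(-F)^{-1}$ up to the positive scalar $\tfrac{\alpha|b|}{2}$, and the paper likewise establishes (\ref{eq:DAPassivePDM}) by direct verification of the dissipation inequality (mirroring the argument of Fox and Shamma). Your tuned scalar factor and explicit handling of the degenerate case $\mu_1=\alpha\mu_2$ are minor refinements of the same argument.
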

\begin{proof}

The proof mirrors that of~\cite[Theorem~4.5, Theorem~4.7]{Fox2013Population-Game} by showing that $\mathcal{L}$ given below is a $\delta$-antistorage function satisfying~(\ref{eq:DAPassivePDM}).
\begin{multline}
    \mathcal{L}(z,s) = -(Fz +\bar{r} -s )^T F^{-1} (Fz+\bar{r}-s),\\ z \in \mathbb{X}, \ s \in \mathbb{R}^n
\end{multline}
\end{proof}

\subsection{Smoothing PDM when $\mathcal F$ is Potential Nonlinear} \label{subsec:AnticipPDMPotential}
In the following proposition, we establish $\delta$-antipassivity of smoothing PDM.
We note that unlike the case considered in Section~\ref{subsec:AnticipPDMAffine},
we allow the stationary population game $\mathcal F$ to be nonlinear.

\begin{proposition} 
\label{prop:SmoothingPDMStability_Potential}
Let $\mathcal{F}: \mathbb X \to \mathbb R^n$ admit a  strictly concave potential function $f: \mathbb R^n \to \mathbb R$ for which $\nabla f = \mathcal F$ holds on $\mathbb X$. Suppose that $f$ is twice continuously differentiable and $\mathrm{Im}(\mathcal F) \subset \mathrm{int}(\mathbb D^\ast)$ holds, where  $\mathrm{Im}(\mathcal F)$ and $\mathbb D^\ast$ are, respectively, defined as
\begin{align*}
  \mathrm{Im}(\mathcal F) &= \left\{\mathcal F(z) \,|\, z \in \mathbb X \right\} \\
  \mathbb D^\ast &= \left\{s \in \mathbb R^n \,\bigg|\, \sup_{y \in \mathbb R^n} (f(y) - s^Ty) < \infty \right\}
\end{align*}
Consider that $\mathcal F$ and a given positive $\alpha$ define a smoothing PDM for which $q(0) \in \mathrm{int}(\mathbb D^\ast)$ holds. The PDM is $\delta$-antipassive and its $\delta$-antistorage function is given by
\begin{align} \label{eq:PDM_antistorage}
  \mathcal L(z,s) = \alpha \left[ \sup_{y \in \mathbb R^n} (f(y) - s^Ty) - (f(z) - s^Tz) \right]
\end{align}
\end{proposition}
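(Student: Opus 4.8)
The plan is to show directly that the map $\mathcal{L}$ in~\eqref{eq:PDM_antistorage} is a continuously differentiable $\delta$-antistorage function, i.e.\ that it satisfies~\eqref{eq:1stDAPAssivePDM} and the dissipation inequality~\eqref{eq:2ndDAPAssivePDM} with $\nu=0$. Write $h(s) \overset{\mathrm{def}}{=} \sup_{y \in \mathbb R^n}\bigl(f(y)-s^Ty\bigr)$, so that $\mathbb D^\ast$ is the effective domain of $h$ and $\mathcal{L}(z,s)=\alpha\bigl[h(s)-\bigl(f(z)-s^Tz\bigr)\bigr]$; taking $y=z$ in the supremum shows $\mathcal{L}\ge 0$. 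For the smoothing PDM (the case $\mu_0=0$, $\mu_1=1$, $\mu_2=0$ of~\eqref{eq:SmoothingPDM}) we have $p(t)=q(t)$, $\dot p(t)=\dot q(t)=\alpha\bigl(\mathcal F(u(t))-q(t)\bigr)$, $\mathcal H(s,z)=s$, and, by Proposition~\ref{prop:SmoothingAnticipPDMSatisfyAssump}, $\bar{\mathcal F}=\mathcal F$; in particular the set in~\eqref{eq:1stDAPAssivePDM} is $\{(z,s)\,|\,s=\mathcal F(z)\}$.

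First I would assemble the convex-analytic facts that make $\mathcal{L}$ well behaved where the closed-loop trajectories live. Being a pointwise supremum of affine functions of $s$, $h$ is convex, lower semicontinuous, and proper (finite on $\mathbb D^\ast$, and bounded below by $f(y_0)-s^Ty_0$ for any fixed $y_0$), and $\mathrm{int}(\mathbb D^\ast)$ is nonempty since $\mathrm{Im}(\mathcal F)\subset\mathrm{int}(\mathbb D^\ast)$. Because $\mathbb X$ is compact and $\mathcal F$ continuous, $\mathrm{conv}\bigl(\mathrm{Im}(\mathcal F)\bigr)$ is a compact convex subset of the convex set $\mathrm{int}(\mathbb D^\ast)$; writing the solution of~\eqref{eq:SmoothingPDMa} as $q(t)=e^{-\alpha t}q(0)+(1-e^{-\alpha t})\,\bar y(t)$ with $\bar y(t)$ a weighted average of values $\mathcal F(u(\tau))$, hence $\bar y(t)\in\mathrm{conv}\bigl(\mathrm{Im}(\mathcal F)\bigr)\subset\mathrm{int}(\mathbb D^\ast)$, and using $q(0)\in\mathrm{int}(\mathbb D^\ast)$ and convexity, we get $q(t)\in\mathrm{int}(\mathbb D^\ast)$ for all $t\ge 0$. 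On $\mathrm{int}(\mathbb D^\ast)=\mathrm{int}(\mathrm{dom}\, h)$ the proper convex function $h$ is subdifferentiable, and by conjugate duality $\partial h(s)=-\argmax_{y}\bigl(f(y)-s^Ty\bigr)$; strict concavity of $f$ makes this argmax a single point $y^\ast(s)$, so $h$ is differentiable there with $\nabla h(s)=-y^\ast(s)$, this gradient is continuous (a finite convex function differentiable on an open set is $C^1$ there), and the unconstrained first-order condition gives $\nabla f\bigl(y^\ast(s)\bigr)=s$. Consequently $\mathcal{L}$ is finite and $C^1$ on $\mathbb X\times\mathrm{int}(\mathbb D^\ast)$, which, by the invariance just shown, contains all closed-loop trajectories.

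Next I would verify~\eqref{eq:1stDAPAssivePDM}: since $y\mapsto f(y)-s^Ty$ is strictly concave and differentiable, $z$ attains the supremum defining $h(s)$ \emph{iff} $\nabla f(z)=s$, so $\mathcal{L}(z,s)=0\Leftrightarrow\nabla f(z)=s$, and because $\nabla f=\mathcal F$ on $\mathbb X$ this is exactly $s=\mathcal F(z)=\bar{\mathcal F}(z)=\mathcal H(s,z)$. For the dissipation inequality it suffices to show $\tfrac{\mathrm d}{\mathrm dt}\mathcal{L}\bigl(u(t),q(t)\bigr)+\dot p^T(t)\dot u(t)\le 0$ and then integrate. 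Differentiating $\mathcal{L}$ along the closed loop — using that its $z$-derivative in the tangent direction $\dot u\in\mathbb{TX}$ equals $\alpha(q-\mathcal F(u))^T\dot u$ (because $\nabla f=\mathcal F$ on $\mathbb X$) and that $\nabla_s\mathcal{L}(z,s)=\alpha\bigl(\nabla h(s)+z\bigr)=\alpha\bigl(z-y^\ast(s)\bigr)$, then substituting $\dot q=\alpha(\mathcal F(u)-q)$ and $\dot p=\dot q$ — the two cross terms $\pm\alpha(q-\mathcal F(u))^T\dot u$ cancel, and with $\mathcal F(u)-q=\nabla f(u)-\nabla f\bigl(y^\ast(q)\bigr)$ one is left with
\begin{equation*}
\frac{\mathrm d}{\mathrm dt}\mathcal{L}+\dot p^T\dot u=\alpha^2\bigl(u-y^\ast(q)\bigr)^T\bigl(\nabla f(u)-\nabla f(y^\ast(q))\bigr)\le 0,
\end{equation*}
the inequality being monotonicity of $\nabla f$, i.e.\ $(x-y)^T\bigl(\nabla f(x)-\nabla f(y)\bigr)\le 0$ for concave $f$. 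Integrating from $0$ to $t$ yields~\eqref{eq:2ndDAPAssivePDM} with $\nu=0$; together with~\eqref{eq:1stDAPAssivePDM} this establishes~\eqref{eq:DAPassivePDM}, hence $\delta$-antipassivity.

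I expect the main obstacle to be the convex-analysis bookkeeping in the second paragraph: confirming that $q(t)$ never leaves $\mathrm{int}(\mathbb D^\ast)$ (so that $\mathcal{L}$ and $h$ stay finite and $C^1$ along trajectories) and justifying the conjugate-duality identities $\nabla h(s)=-y^\ast(s)$ and $\nabla f\bigl(y^\ast(s)\bigr)=s$ that turn the dissipation computation into a monotonicity statement for $\nabla f$. Once these are in place, the differentiation of $\mathcal{L}$ and the final monotonicity step are routine.
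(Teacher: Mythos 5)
Your proof is correct and follows essentially the same route as the paper's: the nonnegativity and zero-set characterization of $\mathcal{L}$ via strict concavity, positive invariance of $\mathrm{int}(\mathbb{D}^\ast)$ along the state trajectory, and the dissipation inequality obtained by cancelling the cross terms and reducing to monotonicity of $\nabla f$ through the identity $\nabla f\bigl(y^\ast(q)\bigr)=q$. The only differences are cosmetic: you establish invariance directly from the explicit convex-combination form of the solution rather than via the paper's Lemma~\ref{lemma:convex_invariance}, and you justify $\nabla h(s)=-y^\ast(s)$ and $\nabla f(y^\ast(s))=s$ with standard conjugate-subdifferential facts where the paper cites \cite[Theorem 26.5]{Rockafellar1996Convex-analysis}.
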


A proof of Proposition \ref{prop:SmoothingPDMStability_Potential}  is given in Appendix \ref{appendix_c}.



\begin{remark}
In Proposition \ref{prop:SmoothingPDMStability_Potential}, we assume that the initial condition of the smoothing PDM satisfies $q(0) \in \mathrm{int}(\mathbb D^\ast)$. In fact, based on Lemma~\ref{lemma:convex_invariance} in Appendix, it can be verified that given any $q(0) \in \mathbb R^n$ and $u \in \mathfrak X$, the state      $q(t)$ of the PDM converges to a compact convex subset of $\mathrm{int}(\mathbb D^\ast)$ containing $\mathrm{Im}(\mathcal F)$. This implies that the state trajectory $q$ enters the set $\mathrm{int}(\mathbb D^\ast)$ in a finite time.
\end{remark}

\section{Numerical Examples and Simulations} \label{sec:exmaples}
Based on the population games described in Examples~\ref{example:conjection5link}-\ref{ex:TaskAllocation}, we provide numerical examples, with simulation results, to demonstrate how our main results can be used to assess global convergence to equilibria of the mean closed loop model~\eqref{eq:ClosedLoop}. In particular, in Examples~\ref{ex:congestion_game_revisited} and \ref{ex:demand_response_game_revisited}, we consider two different cases in which the deterministic payoff is governed by the conventional population game (memoryless PDM) and the smoothing-anticipatory PDM. In Example~\ref{ex:task_allocation_game_revisited}, we examine how the $\delta$-passivity surplus for EDM affects the stability of Nash equilibria of an underlying stationary population game.

\begin{figure} [t]
\begin{center}
\subfigure[]{
\includegraphics[trim={.2in .5in 0 0},width=2.5in]{./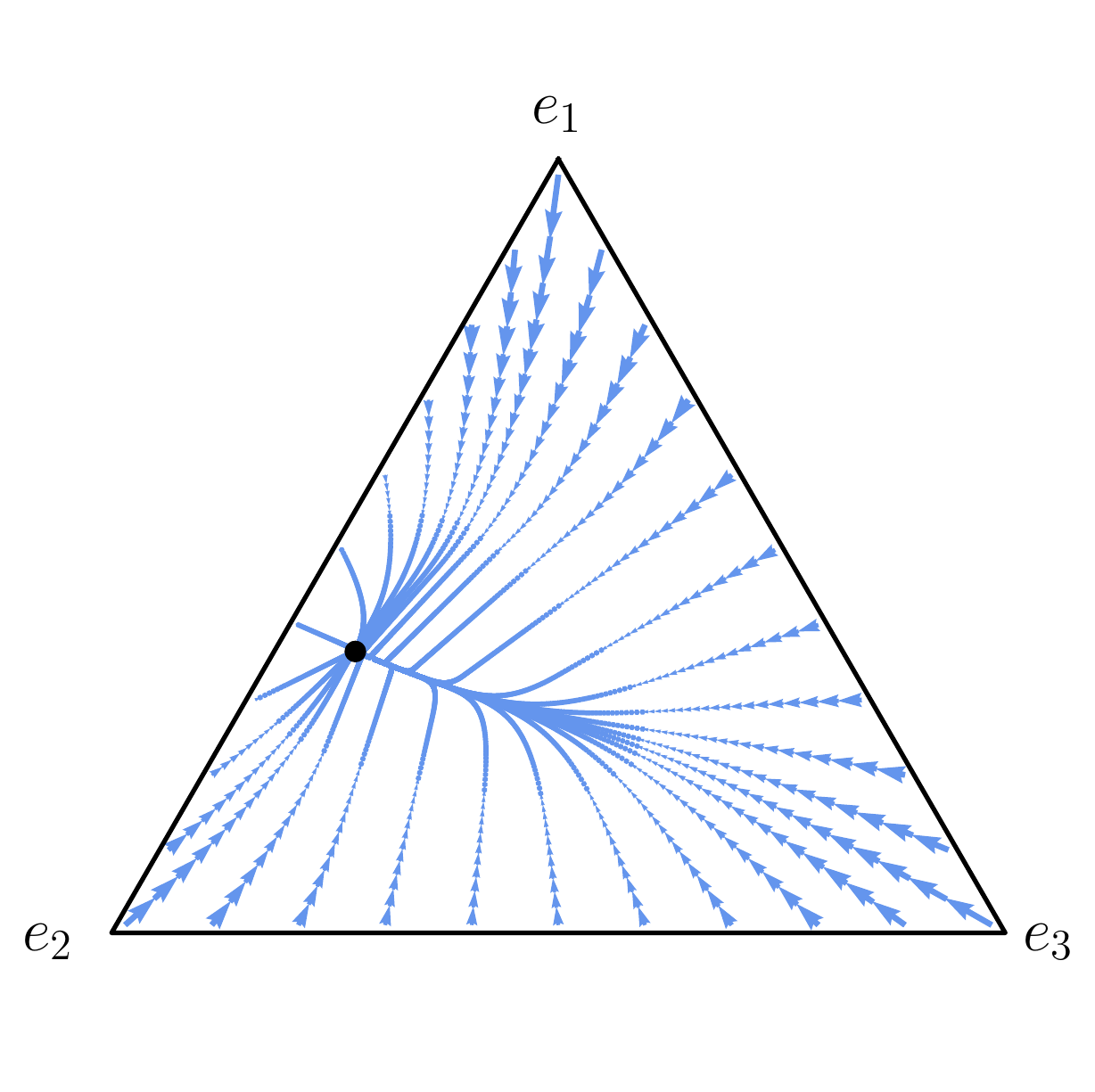}
\label{fig:BNNCongestion_a}}

\subfigure[]{
\includegraphics[trim={.2in .5in 0 0},width=2.5in]{./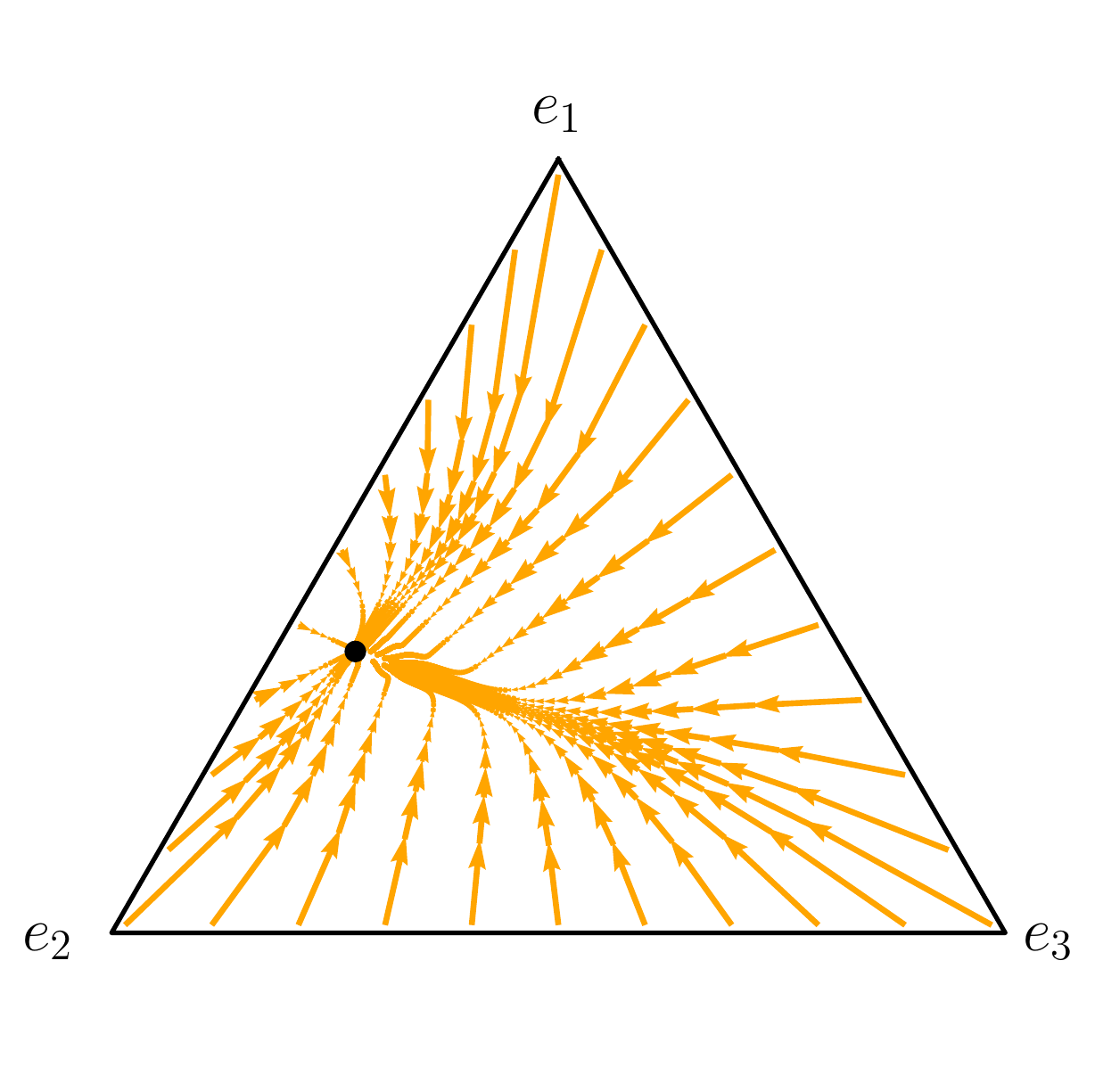}
\label{fig:BNNCongestion_b}}
\caption{Population state trajectories induced by the BNN EDM under (a) the memoryless PDM  and (b) the anticipatory PDM both defined by the congestion game  \eqref{eq:CongestionGame}. The black dots indicate the unique Nash equilibrium of the game.}
\label{fig:BNNCongestion}
\end{center}
\end{figure}

\begin{figure} [t]
\begin{center}
\includegraphics[trim={0.55in .1in 0.5in 0},width=2.0in]{./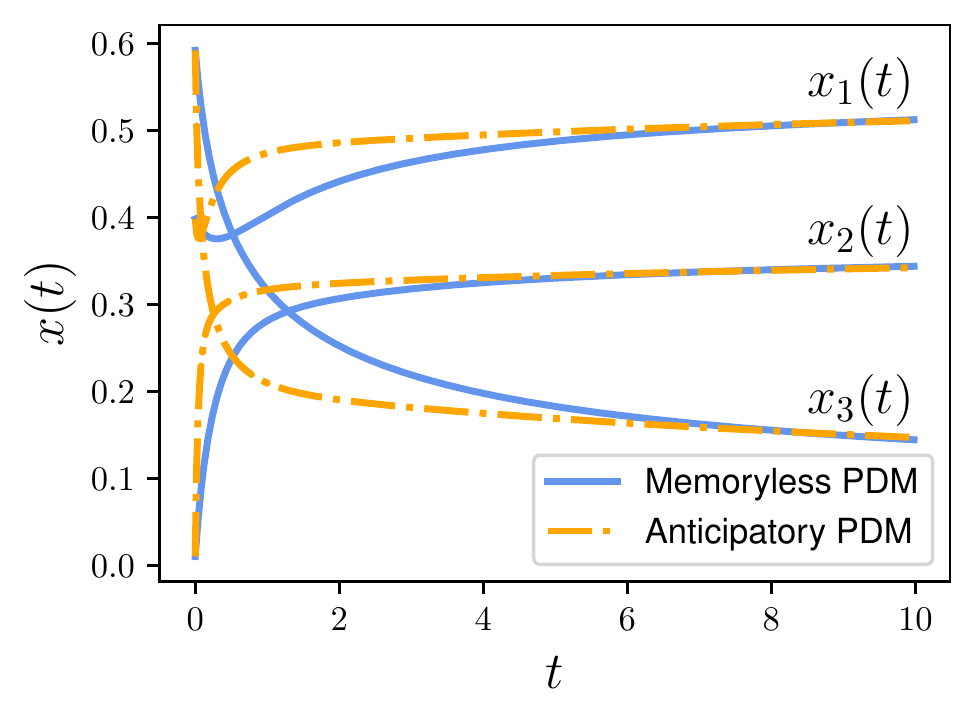}



\caption{Comparison of two population state trajectories induced by the BNN EDM under the memoryless PDM (solid blue lines) and the anticipatory PDM (dotted orange lines) for \eqref{eq:CongestionGame}, where both the trajectories have the same initial condition.}
\label{fig:BNNCongestionTraj}
\end{center}
\end{figure}

\begin{example} \label{ex:congestion_game_revisited}
We revisit the congestion game described in Example~\ref{example:conjection5link}. Consider that the function $\mathscr{D}_i$ is defined as follows:
\begin{align} \label{eq:DelayFunction}
    \mathscr{D}_i(s) = \begin{cases} 2s & \text{if } i = 2 \\
    s & \text{otherwise}
    \end{cases}
\end{align}
Note that the definition of $\mathscr{D}_i$ suggests that the delay incurred on link~2 is twice that on other link. From \eqref{eq:DelayFunction}, we can explicitly express the payoff function $\mathcal F^{Ex.\ref{example:conjection5link}}$ as
\begin{align} \label{eq:CongestionGame}
    \mathcal F^{Ex.\ref{example:conjection5link}} \left( z \right) = \begin{bmatrix}
    -3 z_1 - z_3 \\
    -2 z_2 - z_3\\
    -z_1 - z_2 - 3z_3
    \end{bmatrix}
\end{align}

Using $\mathcal F^{Ex.\ref{example:conjection5link}}$, we define \textnormal{(a)} a memoryless PDM and \textnormal{(b)} an anticipatory PDM ($\mu_2 = 5$) for which both the PDMs are $\delta$-antipassive. In feedback interconnection with the BNN EDM, we construct two mean closed loop models with which we compare their asymptotic and transient behavior.

Fig.~\ref{fig:BNNCongestion} and Fig.~\ref{fig:BNNCongestionTraj} illustrate population state trajectories derived by the two mean closed loop models. As we observe from Fig.~\ref{fig:BNNCongestion}, all the trajectories converge to the unique Nash equilibrium of \eqref{eq:CongestionGame}, which we can also conclude from Theorem~\ref{thm:TheoremIntegrableEPT} and Proposition~\ref{prop:SmoothingAnticipatoryPDMStability}. On the other hand, the trajectories exhibit different transient behavior in that, as illustrated in  Fig.~\ref{fig:BNNCongestionTraj}, the trajectories induced under the anticipatory PDM converge at faster rate than those induced under the memoryless PDM. The simulation results suggest that the agents' anticipation of their future payoffs would improve the rate of convergence to the Nash equilibrium.
\end{example}

\begin{figure} [t]
\begin{center}
\subfigure[]{
\includegraphics[trim={.2in .5in 0 0},width=2.5in]{./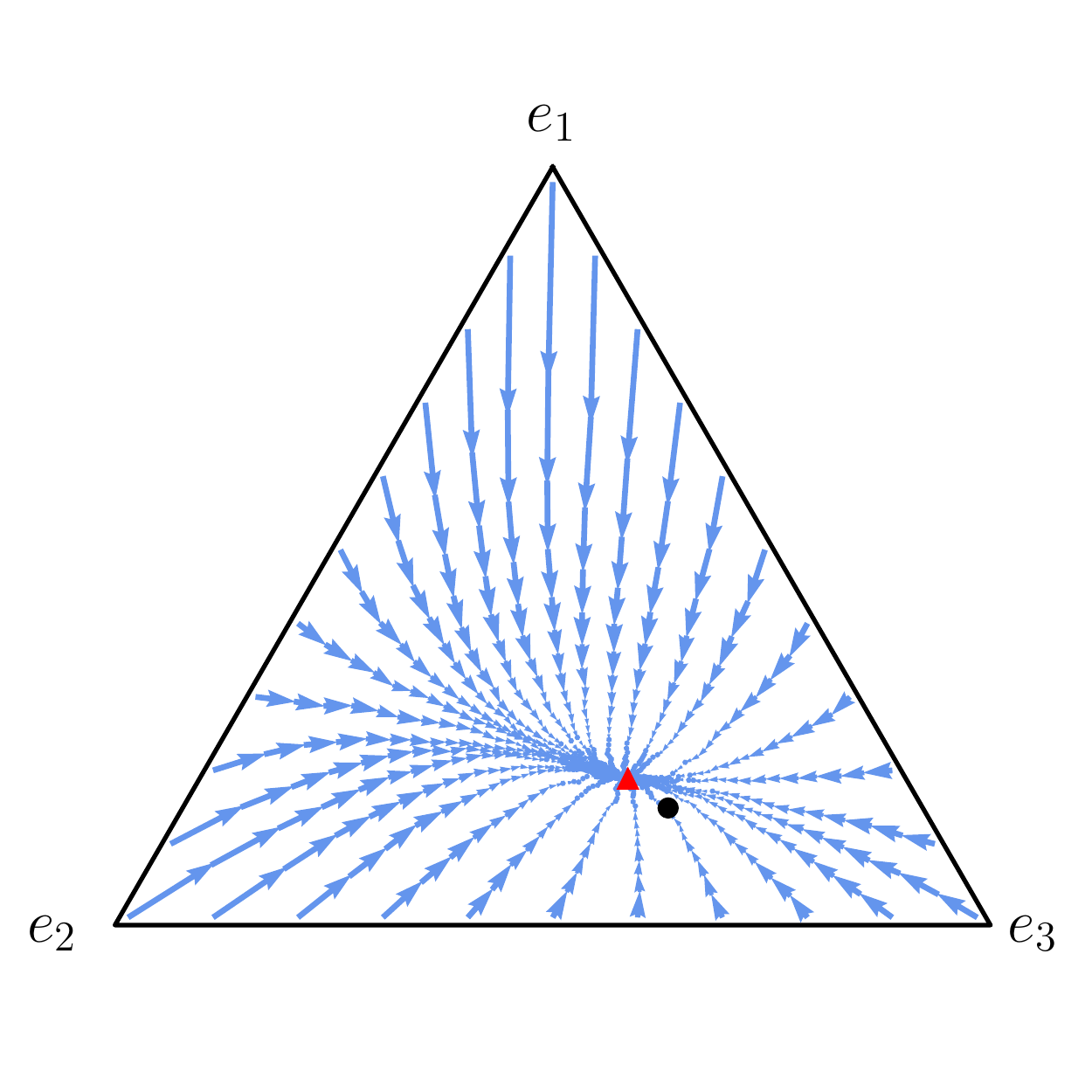}
\label{fig:SmithDemandResponse_a}}

\subfigure[]{
\includegraphics[trim={.2in .5in 0 0},width=2.5in]{./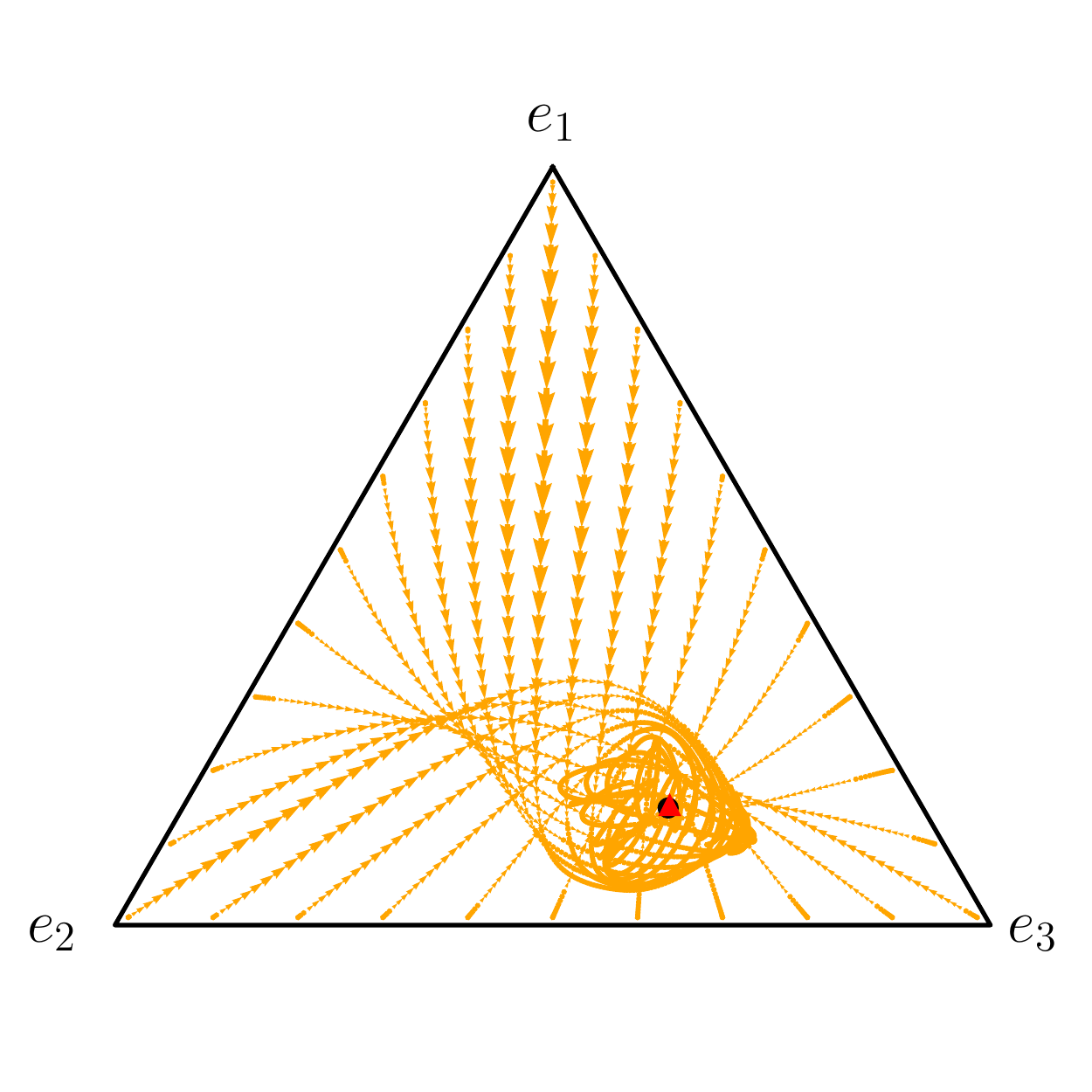}
\label{fig:SmithDemandResponse_b}}
\caption{Averaged population state trajectories induced by the Smith EDM under (a) the memoryless PDM  and (b) the smoothing PDM both defined by the electricity demand response game  \eqref{eq:DemandResponseGame} subject to an additive perturbation. The black dots indicates the unique Nash equilibrium of the game where as the red triangles represent the limit point of the averaged  trajectories.}
\label{fig:SmithDemandResponse}
\end{center}
\end{figure}

\begin{figure} [t]
\begin{center}
\includegraphics[trim={0.55in .1in 0.5in 0},width=2.4in]{./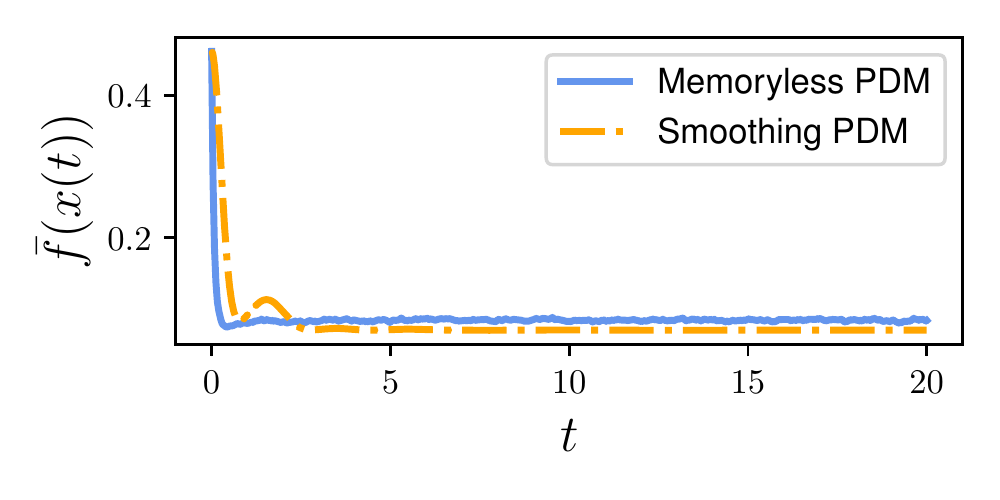}

\caption{The average of the cost $\bar f (x(t))$ evaluated along multiple population state trajectories induced by the Smith EDM under the memoryless PDM (solid blue line) and the smoothing PDM (dotted orange line) defined by \eqref{eq:CostSignalEx}, where all the trajectories start from a same initial condition.}
\label{fig:SmithDemandReponseTraj}
\end{center}
\end{figure}

\begin{example} \label{ex:demand_response_game_revisited}
We revisit the electricity demand response game described in Example~\ref{ex:DemandResponse}. We select the parameter $y$ describing the reduction level of consumers' power consumption to be $y = \left( 10^{-2}, \, 10^{-1}, \, 1 \right)$ and define the cost function \eqref{eq:CostFunction} at the EPU as
\begin{align} \label{eq:CostFunctionEx}
    \bar f (z) = 5 z_1^2 + \frac{5}{2} z_2^2 + \frac{1}{2} z_3^2 + \left(y^T z- 1 \right)
\end{align}
in which case the cost signal~\eqref{eq:DemandResponseGame} is specified by
\begin{align} \label{eq:CostSignalEx}
    \mathcal F^{Ex.\ref{ex:DemandResponse}}(z) = \begin{bmatrix} -10 z_1 - 10^{-2} \\ -5 z_2 - 10^{-1} \\ - z_3 - 1 \end{bmatrix}
\end{align}
Using $\mathcal F^{Ex.\ref{ex:DemandResponse}}$, we define \textnormal{(a)} a memoryless PDM and \textnormal{(b)} a smoothing PDM ($\alpha=1$) for which both the PDMs are $\delta$-antipassive. We consider that the agents in the population revise their strategies according to the Smith protocol \eqref{eq:SmithProtocol}. Using Theorem~\ref{thm:IPCConvergence} and Proposition~\ref{prop:SmoothingAnticipatoryPDMStability}, we assert that population state trajectories derived by the Smith EDM for both the PDMs converge to the unique Nash equilibrium of \eqref{eq:CostSignalEx}.

In this example, we aim at investigating the effect of the payoff smoothing. For this purpose, we suppose that the cost signal \eqref{eq:CostSignalEx} is subject to an additive perturbation that has the standard normal distribution. We execute multiple rounds of simulations for each pre-selected initial condition and compute the average of the resulting trajectories. Also we assess the average of the cost function \eqref{eq:CostFunctionEx} evaluated along the multiple trajectories starting from one fixed initial condition.

Fig.~\ref{fig:SmithDemandResponse} illustrates averaged population state trajectories starting from different initial conditions. As we can observe from the figure, in the memoryless PDM case, the trajectories tend to approach the Nash equilibrium; however, because of the perturbation, the trajectories do not precisely converge to the Nash equilibrium. On the other hand, in the smoothing PDM case, the trajectories converge to a limit point that is closer to the Nash equilibrium than the memoryless PDM case. Fig.~\ref{fig:SmithDemandReponseTraj} depicts the average of the cost $\bar f(x(t))$ evaluated along the multiple population state trajectories in both of the memoryless PDM and smoothing PDM cases. We can observe that the smoothing PDM incurs a smaller average cost than the memoryless PDM case.
 
 
From these observations, we conclude that as the payoff smoothing removes short term fluctuations in the cost signal, it allows the agents to revise their strategies depending on longer term trends in the signal. Interestingly, in our demand response example, such payoff smoothing allows the population to select better strategy profiles that incur smaller costs at the EPU.
\end{example}

\begin{figure} [t]
\centering
\includegraphics[trim={0.55in .1in 0.5in 0},width=1.5in]{./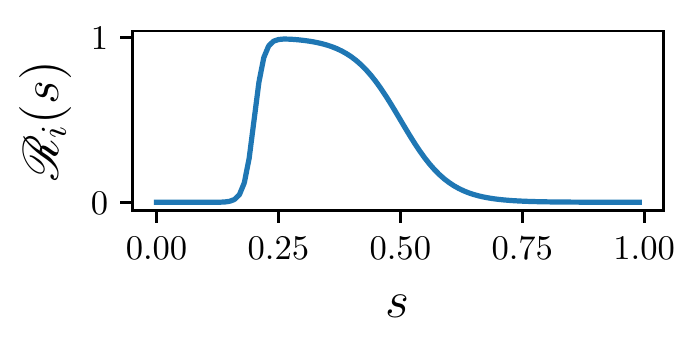}
\caption{The illustration of the reward function $\mathscr{R}_i$ in the task allocation game.}
\label{fig:TaskAllocationReward}
\end{figure}

\begin{figure} [t]
\begin{center}
\subfigure[]{
\includegraphics[trim={.2in .5in 0 0},width=2.5in]{./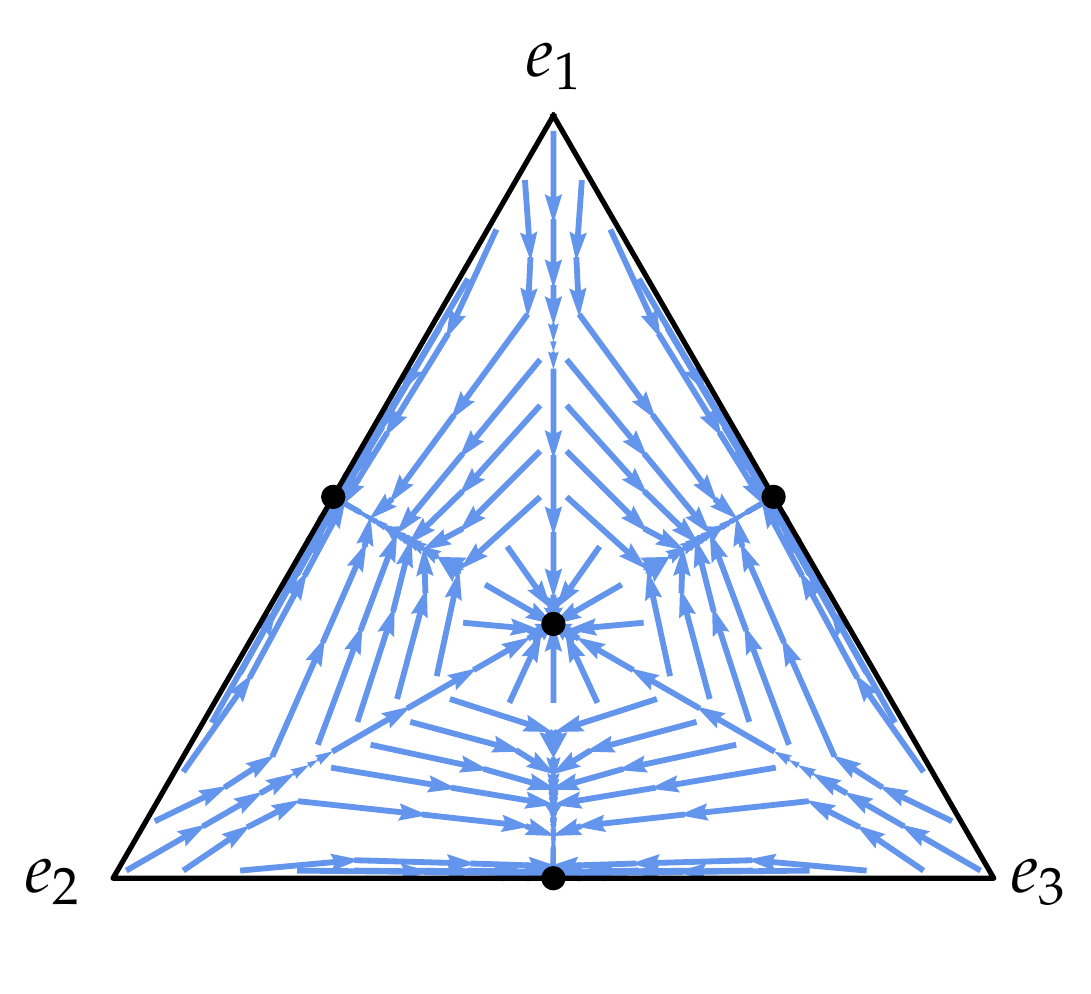}
\label{fig:LogitTaskAllocation_a}
}

\subfigure[]{
\includegraphics[trim={.2in .5in 0 0},width=2.5in]{./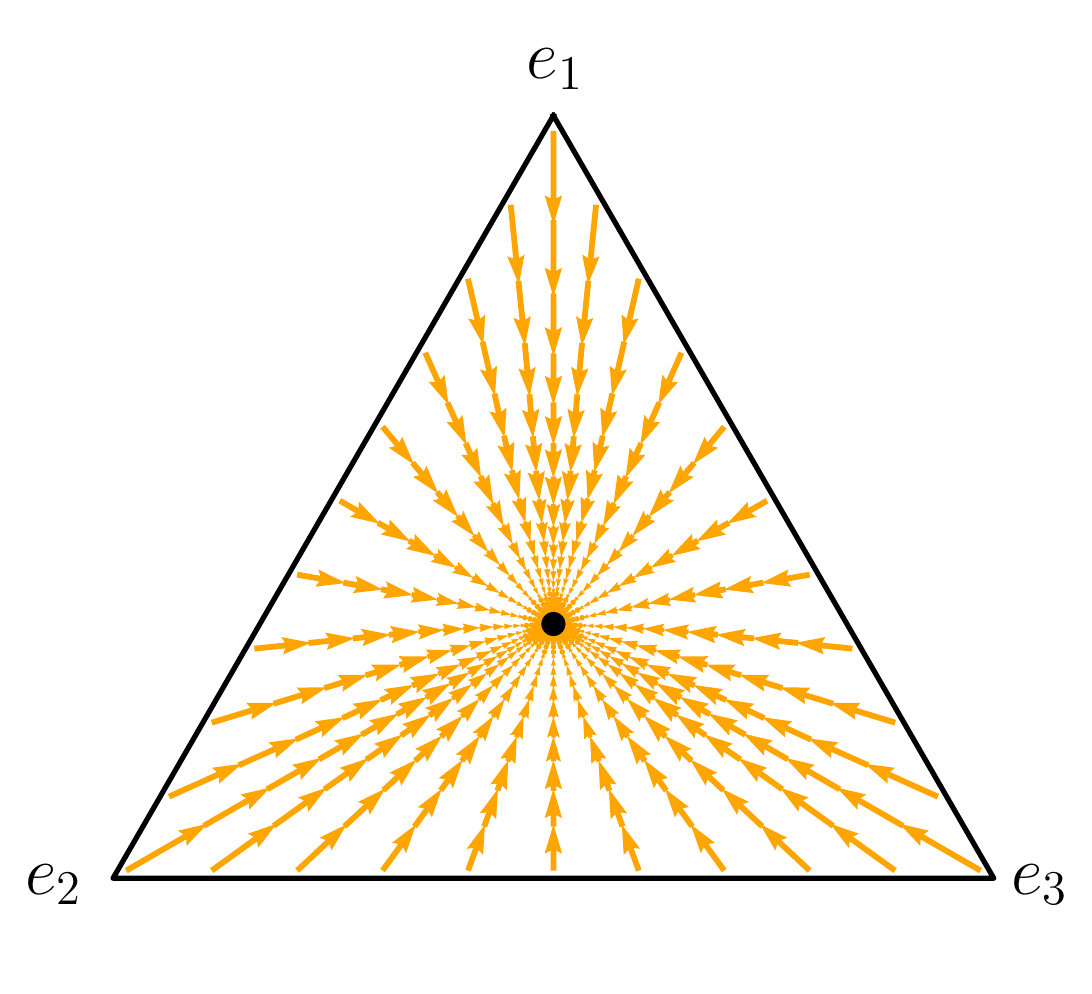}
\label{fig:LogitTaskAllocation_b}
}

\caption{Population state trajectories induced by the logit PDM with the noise level $\eta$ determined as (a) $\eta=0.01$ and (b) $\eta=25$ in the task allocation game  \eqref{eq:TaskAllocationReward}. The black dots indicate the Nash equilibria of the game in (a) and the perturbed equilibrium under the logit protocol in (b).}
\label{fig:LogitTaskAllocation}
\end{center}
\end{figure}

\begin{example} \label{ex:task_allocation_game_revisited}
Let us revisit the task allocation game described in  Example~\ref{ex:TaskAllocation} in which the reward functions $\mathscr{R}_i$ are identical and defined as follows (also see Fig.~\ref{fig:TaskAllocationReward} for the illustration of $\mathscr{R}_i$):
\begin{multline} \label{eq:TaskAllocationReward}
    \mathscr{R}_i (s) = \frac{1}{1+\exp(-100(s-0.2))} \\ - \frac{1}{1+\exp(-20(s-0.5))}, ~ s \in [0,1]
\end{multline}
Here we consider the task allocation game defined by the memoryless PDM $\mathcal F^{Ex.\ref{ex:TaskAllocation}}$ which is $\delta$-antipassive with deficit $\nu^\ast (< 25)$.

Similar to the scenario discussed in \cite[\S V]{8439076}, the reward function \eqref{eq:TaskAllocationReward} can be used to define a task allocation game in which each task requires a minimum number of agents, and hence the reward $\mathscr{R}_i (x_i)$ is negligible when the portion $x_i$ of the population is below a certain threshold, specified in the requirement. On the other hand, once $x_i$ goes beyond the threshold, the reward tends to increase until it reaches its maximum and starts to decrease as the portion $x_i$ continues to increase.


Using the task allocation game, in what follows, we examine asymptotic behavior of the logit EDM (described in  Example~\ref{ex:logit}) subject to two different selections of the noise level $\eta$. Fig.~\ref{fig:LogitTaskAllocation_a} depicts population state trajectories induced by the logit EDM with small $\eta$ ($\eta=0.01$). Notice that the associated mean closed loop model has $4$ stable stationary population states. The stationary state at the center of the simplex is the socially optimal Nash equilibrium of the game, maximizing its average reward, whereas the rests are suboptimal Nash  equilibrium points.

On the contrary, for the mean closed loop model defined by the task allocation game and the logit EDM with larger $\eta$ ($\eta=25$), there is one stationary population state, which is the socially optimal Nash equilibrium. As illustrated in Fig.~\ref{fig:LogitTaskAllocation_b}, the stationary population state is globally asymptotically stable which we can also conclude using  Theorem~\ref{thm:PBRConvergence}.

The results presented in this example suggest that the logit protocol with sufficiently large noise level $\eta$ prevents the population state trajectories from converging to the boundary of the state space $\mathbb X$, which is related to the property \eqref{eq:boundary} of the payoff perturbation. Consequently, in this task allocation game, the agents adopting the logit protocol could avoid selecting suboptimal strategy profiles that are all residing in the boundary of $\mathbb X$.
\end{example}

\appendix

The following three lemmas are key to the proofs of the main results.

\begin{lemma} \label{lemma:passivity_edm}
Given an EDM \eqref{PopulationDynamic} specified by $\mathcal V$, consider the following two relations: For every $z$ and $r$ in $\mathbb X$ and $\mathbb R^n$, respectively,
\begin{subequations} \label{eq:passivity_characterization}
\begin{align}
  \nabla_r \mathcal S(z, r) &= \mathcal V(z, r) \label{eq:passivity_characterization_01} \\
  \nabla_z^T \mathcal S(z, r) \mathcal V(z, r) & \leq - \eta \mathcal V^T(z, r) \mathcal V(z, r) \label{eq:passivity_characterization_02}
\end{align}
\end{subequations}
where $\eta$ and $\mathcal S: \mathbb X \times \mathbb R^n \to \mathbb R_+$ are a nonnegative constant and a map, respectively. The following two statements are true:
\begin{enumerate}
\item  The EDM is \textit{$\delta$-passive} if and only if there is a continuously differentiable $\mathcal S$ satisfying \eqref{eq:passivity_characterization} with $\eta = 0$.
  
\item For positive $\eta$, the EDM is qualified as $\delta$-passive with surplus $\eta$ if and only if there is continuously differentiable $\mathcal S$ satisfying \eqref{eq:passivity_characterization}.
\end{enumerate}
\end{lemma}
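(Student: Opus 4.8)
The plan is to establish the claimed equivalence uniformly in the nonnegative constant $\eta$, so that statement~1 is the instance $\eta = 0$ and statement~2 the instance $\eta > 0$. The guiding observation is that the integral dissipation inequality \eqref{eq:DeltaPassive} is equivalent to its differential counterpart holding along every admissible trajectory, and that — because the input $w$ may be taken to be \emph{any} element of $\mathfrak{P}$ — this differential inequality can in turn be reduced to the pointwise relations \eqref{eq:passivity_characterization}.

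First I would show that \eqref{eq:passivity_characterization} is sufficient. Given a continuously differentiable $\mathcal{S}$ satisfying \eqref{eq:passivity_characterization} and any solution $(x,w)$ of \eqref{PopulationDynamic} with $x(t_0) \in \mathbb{X}$, $w \in \mathfrak{P}$, I would differentiate $t \mapsto \mathcal{S}(x(t),w(t))$ via the chain rule, substitute $\dot{x} = \mathcal{V}(x,w)$, invoke \eqref{eq:passivity_characterization_01} to replace $\nabla_r\mathcal{S}(x,w)$ by $\mathcal{V}(x,w) = \dot{x}$, and then \eqref{eq:passivity_characterization_02} to bound $\nabla_z^T\mathcal{S}(x,w)\mathcal{V}(x,w)$ from above by $-\eta\,\dot{x}^T\dot{x}$. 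This yields $\frac{\mathrm d}{\mathrm dt}\mathcal{S}(x(t),w(t)) \le \dot{x}^T(t)\dot{w}(t) - \eta\,\dot{x}^T(t)\dot{x}(t)$, and integrating over $[t_0,t]$ recovers \eqref{eq:DeltaPassive}. I note that the $z$-gradient of $\mathcal{S}$ enters only paired with $\dot{x}(t) = \mathcal{V}(x(t),w(t)) \in \mathbb{TX}(x(t))$, so the absence of interior points in $\mathbb{X}$ is immaterial here.

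Next I would prove necessity. Assuming a continuously differentiable $\mathcal{S}$ satisfies \eqref{eq:DeltaPassive}, I would fix arbitrary $z \in \mathbb{X}$ and $r, v \in \mathbb{R}^n$, select $w \in \mathfrak{P}$ with $w(t_0) = r$ and $\dot{w}(t_0) = v$ (e.g.\ $w(t) = r + v\sin(t - t_0)$, which meets the boundedness requirements defining $\mathfrak{P}$), and let $x$ be the unique solution of \eqref{PopulationDynamic} with $x(t_0) = z$, so that $\dot{x}(t_0) = \mathcal{V}(z,r)$. Dividing \eqref{eq:DeltaPassive} by $t - t_0 > 0$ and letting $t \downarrow t_0$ — legitimate since $\mathcal{S} \in C^1$ and, by continuity of $\mathcal{V}$ and of $\dot{w}$, the integrand is continuous at $t_0$ — produces the pointwise inequality $\nabla_z^T\mathcal{S}(z,r)\mathcal{V}(z,r) + \nabla_r^T\mathcal{S}(z,r)\,v \le \mathcal{V}^T(z,r)\,v - \eta\,\mathcal{V}^T(z,r)\mathcal{V}(z,r)$, that is, $\bigl(\nabla_r\mathcal{S}(z,r) - \mathcal{V}(z,r)\bigr)^T v \le -\eta\,\mathcal{V}^T(z,r)\mathcal{V}(z,r) - \nabla_z^T\mathcal{S}(z,r)\mathcal{V}(z,r)$. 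Because the right-hand side is independent of $v$ while $v$ ranges over all of $\mathbb{R}^n$, the linear functional $v \mapsto (\nabla_r\mathcal{S}(z,r) - \mathcal{V}(z,r))^T v$ is bounded above, hence identically zero; this is \eqref{eq:passivity_characterization_01}. Taking $v = 0$ then leaves $\nabla_z^T\mathcal{S}(z,r)\mathcal{V}(z,r) \le -\eta\,\mathcal{V}^T(z,r)\mathcal{V}(z,r)$, which is \eqref{eq:passivity_characterization_02}.

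The main obstacle I anticipate lies entirely in the necessity direction: (i) producing an admissible $w \in \mathfrak{P}$ — subject to $\|w\| < \infty$ and $\|\dot{w}\| < \infty$ — that realizes a prescribed value and derivative at a single instant; (ii) justifying the interchange of the limit $t \downarrow t_0$ with the integral, which rests precisely on continuity of the integrand, hence of $\dot{x} = \mathcal{V}(x,w)$; and (iii) the elementary observation that a linear functional on $\mathbb{R}^n$ bounded above is the zero functional. Since the entire argument is uniform in $\eta \ge 0$, the sufficiency direction and both statements of the lemma follow without further work.
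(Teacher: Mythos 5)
Your proof is correct and takes essentially the same route as the paper: the paper regards the EDM as the augmented control-affine system \eqref{eq:InterpEDMdPAssive} and invokes the standard Hill--Moylan-type passivity characterization, whose proof is exactly the differentiate-and-integrate sufficiency step and the pointwise-limit / unbounded-linear-functional necessity step (with an input realizing an arbitrary value and derivative at one instant) that you carry out explicitly. The only difference is that you make the cited argument self-contained, including the construction of the admissible input in $\mathfrak{P}$ and the remark that $\nabla_z\mathcal{S}$ only ever acts on directions in $\mathbb{TX}(z)$, which the paper delegates to the reference.
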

\begin{IEEEproof}
  As described in \S\ref{sec:PassivityConcepts}, the EDM \eqref{PopulationDynamic} can be viewed as a control-affine nonlinear system \eqref{eq:InterpEDMdPAssive} with the input $w^\delta(t)$, state $(x(t),w(t))$, and output $x^\delta(t)$. Using similar arguments as for the passivity characterization theorem (see, for instance, \cite[Theorem~1]{Hill1976The-stability-o}) for control-affine systems, we can see that there is a continuously differentiable $\mathcal S$ satisfying \eqref{eq:passivity_characterization} with $\eta \geq 0$ if and only if $\mathcal S$ satisfies the inequality \eqref{eq:DeltaPassive} with the same $\eta$. The statements $1)$ and $2)$ immediately follow from this equivalence.
\end{IEEEproof}

\begin{lemma} \label{lemma:ns_minimizer}
  Given a $\delta$-passive EDM \eqref{PopulationDynamic} with its $\delta$-storage function $\mathcal S$, let $\mathbb S = \{ (z,r) \in \mathbb X \times \mathbb R^n \,|\, \mathcal V (z,r) = 0 \}$ be the stationary points of the EDM and $\mathcal S^{-1}(0) = \{ (z,r) \in \mathbb X \times \mathbb R^n \,|\, \mathcal S (z,r) = 0\}$ be the global minima of $\mathcal S$. It holds that $\mathcal S^{-1}(0) \subseteq \mathbb S$ and the equality holds if the EDM satisfies Nash stationarity, where we assume that the set $\mathcal S^{-1}(0)$ is nonempty.

\end{lemma}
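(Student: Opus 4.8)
The plan is to prove the two directions separately, using the characterization of $\delta$-passivity supplied by the preceding Lemma~\ref{lemma:passivity_edm}, namely that the $\delta$-storage function $\mathcal S$ satisfies the infinitesimal relations~\eqref{eq:passivity_characterization} with $\eta = 0$: $\nabla_r \mathcal S(z,r) = \mathcal V(z,r)$ and $\nabla_z^T \mathcal S(z,r)\mathcal V(z,r) \le 0$ for all $(z,r) \in \mathbb X \times \mathbb R^n$.

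First I would establish the inclusion $\mathcal S^{-1}(0) \subseteq \mathbb S$. Fix $(z^*,r^*) \in \mathcal S^{-1}(0)$, so $\mathcal S(z^*,r^*) = 0$. Since $\mathcal S$ is nonnegative-valued and continuously differentiable, $(z^*,r^*)$ is a global minimizer of $\mathcal S$ on the domain $\mathbb X \times \mathbb R^n$; in particular $r^*$ is an unconstrained minimizer of $r \mapsto \mathcal S(z^*,r)$ over $\mathbb R^n$, so the first-order condition gives $\nabla_r \mathcal S(z^*,r^*) = 0$. By~\eqref{eq:passivity_characterization_01} this is exactly $\mathcal V(z^*,r^*) = 0$, i.e.\ $(z^*,r^*) \in \mathbb S$. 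This direction holds for any $\delta$-passive EDM and does not require Nash stationarity.

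For the reverse inclusion under Nash stationarity, fix $(z^*,r^*) \in \mathbb S$, so $\mathcal V(z^*,r^*) = 0$, and Nash stationarity says $z^* \in \arg\max_{\bar z \in \mathbb X} \bar z^T r^*$. I want to conclude $\mathcal S(z^*,r^*) = 0$. The clean route is to exploit scaling: by Nash stationarity, for every scalar $\chi \ge 0$ we also have $z^* \in \arg\max_{\bar z \in \mathbb X}\bar z^T(\chi r^*)$, hence $\mathcal V(z^*, \chi r^*) = 0$ for all $\chi \ge 0$. Consider $g(\chi) \overset{\mathrm{def}}{=} \mathcal S(z^*, \chi r^*)$; it is continuously differentiable in $\chi$ with $g'(\chi) = r^{*T}\nabla_r \mathcal S(z^*,\chi r^*) = r^{*T}\mathcal V(z^*,\chi r^*) = 0$ by~\eqref{eq:passivity_characterization_01}. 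Thus $g$ is constant, so $\mathcal S(z^*,r^*) = g(1) = g(0) = \mathcal S(z^*,0)$. It remains to show $\mathcal S(z^*,0) = 0$: since $\mathcal V(z,0) = 0$ for \emph{every} $z \in \mathbb X$ (every profile is a best response to the zero payoff, by Nash stationarity), the inclusion already proven tells us nothing directly, so instead observe that $\mathcal S(\cdot,0)$ is nonnegative with $\nabla_z^T\mathcal S(z,0)\mathcal V(z,0) = 0$ trivially; the cleanest argument is that the nonempty set $\mathcal S^{-1}(0)$ contains some $(z_0,r_0)$, and repeating the scaling argument along $\chi \mapsto \mathcal S(z_0,\chi r_0)$ and then an analogous argument moving $z$ along a path in $\mathbb X$ on which $\mathcal V(\cdot,0)\equiv 0$ shows $\mathcal S(z,0)$ is independent of $z$, hence equal to $\mathcal S(z_0,0) = 0$ (using $\mathcal S(z_0,r_0)=0$ and the $r$-scaling to get $\mathcal S(z_0,0)=0$).

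The main obstacle I anticipate is the last step: pinning down $\mathcal S(z^*,0) = 0$ rigorously. The scaling trick reduces everything to evaluating $\mathcal S$ at zero payoff, but showing $\mathcal S(\cdot,0)$ is constant on $\mathbb X$ requires knowing that $z \mapsto \mathcal S(z,0)$ has vanishing derivative along admissible directions, which follows because $\mathcal V(z,0) = 0$ identically forces, via~\eqref{eq:passivity_characterization_02} or a direct path argument, that $\mathcal S(\cdot,0)$ cannot vary; combined with the hypothesis that $\mathcal S^{-1}(0) \neq \varnothing$ this fixes the constant value at $0$. I would be careful to state this using only the infinitesimal characterization and the non-emptiness hypothesis, and to note that $\nabla_z \mathcal S(z,0)$ need not itself vanish, so the argument should go through the function $g$ and path-connectedness of $\mathbb X$ rather than a naive first-order condition in $z$.
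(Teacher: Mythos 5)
Your first inclusion and the $r$-scaling step are exactly the paper's: $\nabla_r\mathcal S=\mathcal V$ (from Lemma~\ref{lemma:passivity_edm}) forces $\mathcal V=0$ at any global minimizer, and for a stationary point $(z^*,r^*)\in\mathbb S$ Nash stationarity gives $\mathcal V(z^*,\chi r^*)=0$ for all $\chi\ge 0$, hence $\mathcal S(z^*,r^*)=\mathcal S(z^*,0)$. The genuine gap is the remaining claim that $\mathcal S(\cdot,0)$ is constant on $\mathbb X$ (so that nonemptiness of $\mathcal S^{-1}(0)$ pins the constant at $0$). The only control $\delta$-passivity gives on the $z$-dependence of $\mathcal S$ is the directional inequality \eqref{eq:passivity_characterization_02}, i.e.\ on the derivative of $\mathcal S$ along the direction $\mathcal V(z,r)$; at $r=0$ Nash stationarity makes $\mathcal V(z,0)=0$ for every $z$, so that condition degenerates to $0\le 0$ and constrains $\nabla_z\mathcal S(z,0)$ not at all. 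Along an arbitrary path $\gamma$ in $\mathbb X$ one has $\tfrac{\mathrm d}{\mathrm ds}\mathcal S(\gamma(s),0)=\nabla_z^T\mathcal S(\gamma(s),0)\,\dot\gamma(s)$ with $\dot\gamma(s)$ unrelated to $\mathcal V(\gamma(s),0)=0$, so ``path-connectedness of $\mathbb X$'' yields no vanishing-derivative argument: there is no static route from the known zero $(z_0,r_0)\in\mathcal S^{-1}(0)$ to the point $(z^*,0)$ you need, and the step you flagged as the ``main obstacle'' is indeed unproven as written.

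The paper closes exactly this hole with a dynamic argument, which is the missing idea. After obtaining $\mathcal S(z_0,0)=0$ by the same $r$-scaling at the known zero, it feeds the EDM the closed-loop payoff $w(t)=\delta\,\mathcal F_{z^*}(x(t))$ with $\mathcal F_{z^*}(z)=-(z-z^*)$, a strictly contractive game whose unique Nash equilibrium is $z^*$, starting at $x(0)=z_0$. The $\delta$-passivity inequality \eqref{eq:DeltaPassive} (used along this nonconstant payoff trajectory) gives $\tfrac{\mathrm d}{\mathrm dt}\mathcal S\big(x(t),\delta\mathcal F_{z^*}(x(t))\big)\le -\delta\|\mathcal V\|^2\le 0$; LaSalle together with Nash stationarity gives convergence of $\big(x(t),\delta\mathcal F_{z^*}(x(t))\big)$ to $(z^*,0)$; and continuity of $\mathcal S$ with $\delta$ chosen small enough that $\mathcal S\big(z_0,\delta\mathcal F_{z^*}(z_0)\big)<\epsilon$ yields $\mathcal S(z^*,0)<\epsilon$ for every $\epsilon>0$, hence $\mathcal S(z^*,0)=0$ and then $\mathcal S(z^*,r^*)=0$ by your scaling identity. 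Some such use of trajectories is unavoidable: the infinitesimal conditions alone do not determine how $\mathcal S(\cdot,0)$ varies in $z$, so your proposal needs this (or an equivalent) mechanism to transfer the zero of $\mathcal S$ across $\mathbb X$.
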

\begin{IEEEproof}
The first part of the statement directly follows from the condition \eqref{eq:passivity_characterization_01} and the fact that at a global minimizer $(z^\ast,r^\ast)$ of $\mathcal S$, it holds that $\nabla_r \mathcal S(z^\ast,r^\ast) = 0$. 

Now suppose that the EDM satisfies Nash stationarity. To prove the second statement, it is sufficient to show that at each equilibrium point $(z_o,r_o)$ of \eqref{PopulationDynamic}, it holds that $\mathcal S(z_o,r_o) = 0$. To this end, let us consider the payoff map given by $\mathcal F_{z_{o}} (z) = -(z - z_{o})$ for a fixed $z_o$ in $\mathbb{X}$. Notice that $z_o$ is the unique Nash equilibrium of the population game $\mathcal F_{z_{o}}$. In what follows, we show that $\mathcal S(z_o, r_o) = 0$ holds for any choice of $(z_o, r_o)$ in $\mathbb S$.

Let $(z^\ast, r^\ast)$ be a global minimizer of $\mathcal S$, i.e., $\mathcal S\left( z^\ast, r^\ast \right) = 0$. By the first part of the statement, \eqref{eq:NashStationarity}, and \eqref{eq:passivity_characterization_01}, we have that $\mathcal V(z^\ast, \sigma r^\ast) = 0$ for all $\sigma$ in $\mathbb R_+$, and hence it holds that
$$\mathcal S (z^\ast, 0) = \mathcal S (z^\ast, r^\ast) - \int_0^1 \left( r^\ast \right)^T  \mathcal V(z^\ast, \sigma  r^\ast) \,\mathrm d \sigma = 0$$ 
By the continuity of $\mathcal S$, for each $\epsilon > 0$, there exists $\delta > 0$ for which $\mathcal S \left(z^\ast, \delta \mathcal F_{z_o} (z^\ast) \right) < \epsilon$ holds.

Given input $w(t) = \delta \mathcal F_{z_o}(x(t))$, let $x$ be the mean population state trajectory derived by the EDM. Since the EDM is $\delta$-passive, according to \eqref{eq:passivity_characterization}, the following relation holds for every positive constant $\delta$:
\begin{align} \label{eq:prop_storage_function_global_minimum_02}
  &\frac{\mathrm d}{\mathrm d t} \mathcal S (x(t), \delta \mathcal F_{z_o}(x(t))) \nonumber \\
  &\leq \delta \mathcal V^T(x(t), \delta \mathcal F_{z_o}(x(t))) \mathcal{DF}_{z_0}(x(t)) \mathcal V(x(t), \delta \mathcal F_{z_o}(x(t))) \nonumber \\
  &= -\delta \left\| \mathcal V \left( x(t), \delta \mathcal F_{z_o}(x(t)) \right) \right\|^2
\end{align}
Suppose that the mean population state $x(t)$ satisfies the initial condition $x(0) = z^\ast$. By an application of LaSalle's theorem \cite{Khalil1995Nonlinear-syste} and by \eqref{eq:NashStationarity}, we can verify that $(x(t), \delta \mathcal F_{z_o}(x(t)))$ converges to $\left( z_{o}, 0 \right)$ as $t \to \infty$. In addition, due to \eqref{eq:prop_storage_function_global_minimum_02}, we have that 
$$\mathcal S \left( z_o, 0 \right) \leq \mathcal S \left( z^\ast, \delta \mathcal F_{z_0} (z^\ast) \right) < \epsilon$$
Since this holds for every $\epsilon > 0$, we conclude that $\mathcal S (z_o, 0 ) = 0$. By the fact that $\mathcal V (z_o, \sigma r_o) = 0$ for all $\sigma$ in $\mathbb R_+$ if $(z_o, r_o)$ belongs to $\mathbb S$, we can see that the following equality holds for every $r_o$ for which $(z_o, r_o)$ belongs to $\mathbb S$:
\begin{align} \label{eq:prop_storage_function_global_minimum_03}
  \mathcal S (z_o, r_o ) = \mathcal S(z_o, 0) + \int_0^1 r_o^T \mathcal V (z_o, \sigma r_o) \,\mathrm d \sigma = 0
\end{align}

Since we made an arbitrary choice of $z_o$ from $\mathbb X$ in constructing the payoff map $\mathcal F_{z_o}$, we conclude that \eqref{eq:prop_storage_function_global_minimum_03} holds for every $(z_o, r_o)$ in $\mathbb S$. This proves the lemma.
\end{IEEEproof}

\begin{lemma} \label{lemma:convex_invariance}
Consider a differential equation given by
\begin{align} \label{eq:convex_input}
  \dot q(t) = \alpha \left( v(t) - q(t) \right)
\end{align}
where $\alpha$ is a positive constant and $v$ is a continuous function that takes a value in a closed convex subset $\mathbb F$ of $\mathbb R^n$. The set $\mathbb F$ is positively invariant and it holds that $\lim_{t \to \infty} \left( \inf_{s \in \mathbb F} \|q(t) - s\| \right) = 0$ for any $q(0) \in \mathbb R^n$.
\end{lemma}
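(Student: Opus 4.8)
The plan is to treat $v$ as a fixed continuous forcing term and analyze the linear nonhomogeneous ODE~\eqref{eq:convex_input} directly via its explicit solution. First I would write
$$ q(t) = e^{-\alpha t} q(0) + \alpha \int_0^t e^{-\alpha(t-\tau)} v(\tau)\,\mathrm d\tau. $$
For positive invariance, suppose $q(0) \in \mathbb F$. Since $\mathbb F$ is closed and convex, it suffices to show that for every linear functional $c^T\cdot$ and every $b$ with $c^T s \le b$ for all $s \in \mathbb F$, we also have $c^T q(t) \le b$ for all $t \ge 0$. Applying $c^T$ to the explicit solution and using $e^{-\alpha t} + \alpha\int_0^t e^{-\alpha(t-\tau)}\,\mathrm d\tau = 1$ (a convex combination of the weights $e^{-\alpha t}$ on $q(0)$ and $\alpha e^{-\alpha(t-\tau)}\,\mathrm d\tau$ on $v(\tau)$), together with $c^T q(0) \le b$ and $c^T v(\tau) \le b$, gives $c^T q(t) \le b$. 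Taking the intersection over all supporting halfspaces of $\mathbb F$ yields $q(t) \in \mathbb F$. Alternatively, and perhaps more cleanly, I would argue pointwise: at any $t$ where $q(t) \notin \mathbb F$, let $s^\star = \Pi_{\mathbb F}(q(t))$ be the metric projection; the derivative of $\tfrac12\|q(t)-s^\star\|^2$ along the flow is $(q(t)-s^\star)^T \dot q(t) = \alpha (q(t)-s^\star)^T(v(t)-q(t))$, and writing $v(t)-q(t) = (v(t)-s^\star) - (q(t)-s^\star)$ with the variational inequality $(q(t)-s^\star)^T(v(t)-s^\star) \le 0$ (valid since $v(t) \in \mathbb F$ and $s^\star$ is the projection) shows this derivative is $\le -\alpha \|q(t)-s^\star\|^2 = -2\alpha \cdot \tfrac12\|q(t)-s^\star\|^2$.

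For the attractivity claim, define $d(t) = \inf_{s\in\mathbb F}\|q(t)-s\| = \|q(t) - \Pi_{\mathbb F}(q(t))\|$. From the differential inequality above, $\tfrac{\mathrm d}{\mathrm dt}\big(\tfrac12 d^2(t)\big) \le -2\alpha\cdot\tfrac12 d^2(t)$ wherever $d(t)>0$ (and $d$ is absolutely continuous since $q$ is $C^1$ and the projection is $1$-Lipschitz, so this comparison is legitimate via a standard Grönwall/Danskin argument), whence $d^2(t) \le e^{-2\alpha t} d^2(0)$ and $d(t) \to 0$. A fully elementary alternative avoiding projection differentiability: pick any fixed $s_0 \in \mathbb F$, apply $\|\cdot\|$ (Euclidean) to $q(t) - s_0 = e^{-\alpha t}(q(0)-s_0) + \alpha\int_0^t e^{-\alpha(t-\tau)}(v(\tau)-s_0)\,\mathrm d\tau$; this does not immediately give decay to $\mathbb F$, so I would instead use that for each $\epsilon>0$ there is $T$ with... actually the cleanest is to note $q(t) - \alpha\int_0^t e^{-\alpha(t-\tau)}v(\tau)\,\mathrm d\tau = e^{-\alpha t}q(0) \to 0$, and the integral term, being (for $t$ large) essentially an average of $v(\tau)$ over recent times with weights integrating to nearly $1$, lies within $\mathbb F$ up to vanishing error — but making "recent times" precise still needs a continuity modulus of $v$, which we only have locally. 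Hence I will commit to the projection/Lyapunov argument, which sidesteps any regularity of $v$ beyond continuity (indeed only local integrability is needed).

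The main obstacle is the technical justification that $t \mapsto \tfrac12 d^2(t)$ satisfies the stated differential inequality despite $\Pi_{\mathbb F}$ not being differentiable: the remedy is Danskin-type reasoning — for fixed $t_0$, compare $d^2(t)$ near $t_0$ with $\|q(t) - \Pi_{\mathbb F}(q(t_0))\|^2$, which is $C^1$ and upper-bounds $d^2(t)$ while agreeing at $t_0$, so the upper Dini derivative of $d^2$ at $t_0$ is bounded by $\tfrac{\mathrm d}{\mathrm dt}\big|_{t_0}\|q(t)-\Pi_{\mathbb F}(q(t_0))\|^2 = 2\alpha(q(t_0)-\Pi_{\mathbb F}(q(t_0)))^T(v(t_0)-q(t_0)) \le -2\alpha d^2(t_0)$; since $d$ is continuous this Dini bound integrates to the claimed exponential estimate. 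With this in hand both assertions of the lemma follow, and positive invariance is the special case $d(0)=0$.
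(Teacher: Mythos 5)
Your proof is correct, but it takes a genuinely different route from the paper's. The paper proves positive invariance by approximating $v$ with piecewise-constant functions, writing the resulting approximate solution at time $t_1$ as a \emph{finite} convex combination of points of $\mathbb F$ (namely $q(t_0)$ and sampled values of $v$), and passing to the limit using closedness of $\mathbb F$; it then gets attractivity for arbitrary $q(0)$ by linearity, comparing $q$ with a second solution $\bar q$ started inside $\mathbb F$ and noting $q(t)-\bar q(t)=e^{-\alpha t}(q(0)-\bar q(0))\to 0$. You instead work with the squared distance $d^2(t)=\inf_{s\in\mathbb F}\|q(t)-s\|^2$ as a Lyapunov function, using the projection variational inequality $(q-\Pi_{\mathbb F}(q))^T(v-\Pi_{\mathbb F}(q))\le 0$ and a Danskin/upper-Dini-derivative comparison to get $d^2(t)\le e^{-2\alpha t}d^2(0)$, with invariance as the case $d(0)=0$; your sketched supporting-halfspace argument is essentially a cleaner continuum version of the paper's convex-combination step, replacing the discretization by the fact that a closed convex set is the intersection of the closed halfspaces containing it. Each approach has its merits: the paper's is entirely elementary (explicit solution formula plus closedness under limits of convex combinations, no projection machinery or nonsmooth calculus), while yours is more compact, handles both claims in one stroke, and yields a quantitative exponential rate rather than mere convergence. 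The only cosmetic point is that the paper's $\|\cdot\|$ is the max norm while your projection argument uses the Euclidean norm; since the norms are equivalent on $\mathbb R^n$, the stated limit transfers, but it is worth a one-line remark. Your Dini-derivative justification is sound: the majorant $t\mapsto\|q(t)-\Pi_{\mathbb F}(q(t_0))\|^2$ is $C^1$, touches $d^2$ from above at $t_0$, and $d^2$ is locally Lipschitz, so the standard comparison lemma applies.
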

\begin{IEEEproof}
  We first proceed with the case where $q(0)$ is contained in $\mathbb F$ and show that $\mathbb F$ is a positively invariant set of \eqref{eq:convex_input}. By contradiction, suppose that there is time indices $t_0, t_1$ for which $q(t_0) \in \mathbb F$ and $q(t) \notin \mathbb F$ for all $t \in (t_0, t_1]$.

  Let us define a piecewise constant function by
  \begin{align*}
    v_K(t) \overset{\text{def}}{=} v\left(t_0 + \frac{k-1}{K} (t_1-t_0)\right)
  \end{align*}
  for $t \in \left (t_0 + \frac{k-1}{K} (t_1-t_0), t_0 + \frac{k}{K} (t_1-t_0) \right ]$ for each $k$ in $\{1, \cdots, K\}$. Using the function $v_K(t)$, 
  we define the following:
  \begin{align}
    q_K(t_1) &= e^{-\alpha (t_1-t_0)} b_0 \nonumber \\
           & \quad + \alpha \sum_{k=1}^{K} \int_{t_0 + \frac{k-1}{K} (t_1-t_0)}^{t_0 + \frac{k}{K} (t_1-t_0)} e^{-\alpha (t_1-\tau)}  \, \mathrm d \tau ~ b_k
  \end{align}
  where $b_0 = q(t_0) \in \mathbb F$ and $b_k = v\left(t_0 + \frac{k-1}{K} (t_1-t_0)\right) \in \mathbb F$ for $k$ in $\{1, \cdots, K\}$.  Note that since $q_K(t_1)$ is a convex combination of $\{ b_k \}_{k=0}^{K}$, it holds that $q_K(t_1) \in \mathbb F$. Using the fact that $\lim_{K \to \infty} \|q_K(t_1) - q(t_1) \| = 0$, we have that $q(t_1) \in \mathbb F$, which contradicts the hypothesis that $q(t) \notin \mathbb F$ for all $t \in (t_0, t_1]$.
  
Now consider that $q(0)$ is not necessarily contained in $\mathbb F$. By explicitly writing a solution to \eqref{eq:convex_input}, we can derive the following expression:
\begin{align} \label{eq:lemma:convex_invariance_01}
q(t) &= e^{-\alpha t} q(0) + \alpha \int_0^t e^{-\alpha (t-\tau)} v(\tau) \, \mathrm d \tau \nonumber \\
&= \bar q(t) + e^{-\alpha t} \left( q(0) - \bar q(0) \right)
\end{align}
where $\bar q(t) = e^{-\alpha t} \bar q(0) + \alpha \int_0^t e^{-\alpha (t-\tau)} v(\tau) \, \mathrm d \tau$ with $\bar q(0) \in \mathbb F$. Since the second term in \eqref{eq:lemma:convex_invariance_01} vanishes as $t \to \infty$, based on the positive invariance of $\mathbb F$, we conclude that 
\begin{align}
\lim_{t \to \infty} \left( \inf_{s \in \mathbb F} \|q(t) - s\| \right) \leq  \lim_{t \to \infty} \|q(t) - \bar q(t)\| = 0
\end{align}
\end{IEEEproof}

\subsection{Proofs of Lemmas \ref{lem:MainLemma} and \ref{lem:NashStationMainLemma}} \label{appendix_a}

\paragraph{Proof of Lemma \ref{lem:MainLemma}}
Recall that under  Case~I, since the $\delta$-storage function $\mathcal S$ is informative, we have that
\begin{align} \label{eq:lemma_stability_01}
  \nabla_z^T \mathcal S \left( z, r \right) \mathcal V \left( z, r \right) = 0  \implies \mathcal S \left(z, r \right) = 0
\end{align}
Also, using Lemma \ref{lemma:passivity_edm}, we note that under Case~II the following relation is true:
\begin{align}
  &\nabla_z^T \mathcal S \left(z, r \right) \mathcal V \left(z, r \right) + \nu^\ast \mathcal V^T \left(z, r \right) \mathcal V \left(z, r\right) \nonumber \\
  &\leq -(\eta^\ast - \nu^\ast) \mathcal V^T \left( z, r \right) \mathcal V \left( z, r \right) \leq 0 
\end{align}
Since $\mathcal S$ is informative, we have that
\begin{align} \label{eq:lemma_stability_02}
  &\nabla_z^T \mathcal S \left( z, r \right) \mathcal V \left( z, r \right) + \nu^\ast \mathcal V^T \left(x, p \right) \mathcal V \left(z, r \right) = 0 \nonumber \\
  & \implies \mathcal S \left(z, r \right) = 0
\end{align}

Hence, under either Case~I ($\nu^\ast = 0$) or Case~II ($\nu^\ast > 0$), according to \eqref{eq:lemma_stability_01} and \eqref{eq:lemma_stability_02}, we can see that
\begin{align} \label{eq:lemma_stability_03}
  &\nabla_z^T \mathcal S \left( z , r \right) \mathcal V \left(z, r \right) + \nu^\ast\mathcal V^T \left(z, r \right) \mathcal V \left(z, r \right) = 0 \nonumber \\
  &\implies \mathcal S \left(z, r \right) = 0
\end{align}
    
In what follows, using \eqref{eq:lemma_stability_03}, we prove the statement of the lemma. We proceed with defining an open set defined by $\mathbb O_\epsilon \overset{\mathrm{def}}{=} \{ t > 0 \,\big|\, \mathcal S (x(t), p(t)) > \frac{\epsilon}{2} \}$ for a given state trajectory $(x,p)$ and any constant $\epsilon>0$. According to \eqref{eq:lemma_stability_03} and Remark \ref{rem:bounded}, there exists $\delta_1 > 0$ for which the following holds for all $t$ in $\mathbb O_\epsilon$:
\begin{align} \label{eq:lemma_stability_04}
  &\nabla_x^T \mathcal S (x(t), p(t)) \mathcal V(x(t), p(t)) \nonumber \\
  &+ \nu^\ast \mathcal V^T(x(t), p(t)) \mathcal V(x(t), p(t)) \leq -\delta_1
\end{align}
Note that using \eqref{eq:WDAntipPDM}, we can derive the following relations:
\begin{align} \label{eq:lemma_stability_05}
  & \mathcal S (x(t), p(t)) - \mathcal S (x(0), p(0)) - \mathcal{A}(q(0), \|\dot x \|) \nonumber \\
  &\leq \int_0^t \bigg[ \frac{\mathrm d}{\mathrm d\tau} \mathcal S (x(\tau), p(\tau)) \nonumber \\
  & \qquad\qquad - \dot p^T(\tau) \dot x(\tau) + \nu^\ast \dot x^T(\tau) \dot x(\tau) \bigg]\,\mathrm d\tau \nonumber \\
  &= \int_0^t \Big[ \nabla_x^T \mathcal S (x(\tau), p(\tau)) \mathcal V(x(\tau), p(\tau)) \nonumber \\
  & \qquad\qquad + \nu^\ast \mathcal V^T(x(\tau), p(\tau)) \mathcal V(x(\tau), p(\tau)) \Big] \,\mathrm d\tau
\end{align}
where we use the fact that $\nabla_r \mathcal S(z,r) = \mathcal V(z,r)$ (see Lemma~\ref{lemma:passivity_edm}). Since $\mathcal S$ is a non-negative function, we can infer that \eqref{eq:lemma_stability_05} is lower-bounded by $-\mathcal S(x(0), p(0))-\mathcal{A}(q(0), \|\dot x \|)$ for $t \geq 0$. In conjunction with \eqref{eq:lemma_stability_04}, this yields that
\begin{align}
  &- \mathcal S (x(0), p(0))-\mathcal{A}(q(0), \|\dot x \|) \nonumber \\
  &\leq \int_0^\infty \Big[ \nabla_x^T \mathcal S (x(\tau), p(\tau)) \mathcal V(x(\tau), p(\tau)) \nonumber \\
  & \qquad\qquad + \nu^\ast \mathcal V^T(x(\tau), p(\tau)) \mathcal V(x(\tau), p(\tau)) \Big] \,\mathrm d\tau \nonumber \\
  &\leq \int_{\mathbb{O}_\epsilon} \Big[ \nabla_x^T \mathcal S (x(\tau), p(\tau)) \mathcal V(x(\tau), p(\tau)) \nonumber \\
  & \qquad\qquad + \nu^\ast \mathcal V^T(x(\tau), p(\tau)) \mathcal V(x(\tau), p(\tau)) \Big] \,\mathrm d\tau \nonumber \\
  & \leq -\delta_1 \cdot \mathfrak L \left(\mathbb O_\epsilon\right)
\end{align}
where $\mathfrak L\left(\mathbb O_\epsilon\right)$ is the Lebesgue measure of $\mathbb O_{\epsilon}$. Hence, we have that $\mathfrak L \left(\mathbb O_\epsilon \right) \leq \delta_1^{-1} (\mathcal S (x(0), p(0))+\mathcal{A}(q(0), \|\dot x \|))$. Note that since $x \in \mathfrak X$, $\|\dot x\|$ is bounded and, hence, so does $\mathfrak L(\mathbb O_\epsilon)$.

We can represent the open set $\mathbb O_\epsilon$ as a union of disjoint open intervals $\left\{ \mathbb I_i \right\}_{i=1}^\infty$, i.e., $\mathbb O_\epsilon = \bigcup_{i=1}^\infty \mathbb I_i$. Notice that by our construction of $\mathbb O_\epsilon$, by letting $\mathbb I_i = (a_i, b_i)$, we have that $\mathcal S (x(a_i), p(a_i)) \leq \frac{\epsilon}{2}$ and $\mathcal S (x(t), p(t)) > \frac{\epsilon}{2}$ for all $t$ in $\mathbb I_i$. Since $\mathbb O_\epsilon$ has finite Lebesgue measure, it holds that $\lim_{i \to \infty} \mathfrak L \left( \mathbb I_i \right) = 0$.

In what follows, we show that for each $\epsilon > 0$, there exists $T_\epsilon > 0$ for which $\mathcal S (x(t), p(t)) < \epsilon$ holds for all $t \geq T_\epsilon$, and we conclude that $\lim_{t \to \infty} \mathcal S (x(t), p(t)) = 0$. By contradiction, suppose that there exist a constant $\epsilon > 0$ and an infinite subsequence $\left\{ \mathbb I_j \right\}_{j \in \mathbb J}$ of $\left\{ \mathbb I_i \right\}_{i=1}^\infty$ for which the following holds: For every $j$ in $\mathbb J$,
$$\max_{t \in \mathrm{cl}\left(\mathbb I_j\right)} \mathcal S (x(t), p(t)) \geq \epsilon$$
where $\mathrm{cl}\left(\mathbb I_j\right)$ is the closure of $\mathbb I_j$. Let $\overline t_j \in \mathrm{cl}\left(\mathbb I_j\right)$ be for which the following holds:
$$\mathcal S (x(\overline t_j), p(\overline t_j)) = \max_{t \in \mathrm{cl}\left(\mathbb I_j\right)} \mathcal S (x(t), p(t))$$
By letting $\mathbb I_j = (a_j, b_j)$, we can derive the following:
\begin{align} \label{eq:lemma_stability_06}
  & \mathcal S (x(\overline t_j), p(\overline t_j)) - \mathcal S (x(a_j), p(a_j)) \nonumber \\
  &= \int_{a_j}^{\overline t_j} \frac{\mathrm d}{\mathrm d \tau} \mathcal S (x(\tau), p(\tau)) \,\mathrm d\tau \nonumber \\
  &\overset{(i)}{\leq} \int_{a_j}^{\overline t_j} \dot p^T(\tau) \mathcal V(x(\tau), p(\tau)) \,\mathrm d\tau \nonumber \\
  &\overset{(ii)}{\leq} \delta_2 \mathfrak L \left(\mathbb I_j\right)
\end{align}
The inequality $(i)$ can be derived using the facts that $\nabla_x^T \mathcal S (x(t), p(t)) \mathcal V(x(t), p(t)) \leq 0$ and $\nabla_p \mathcal S(x(t),p(t)) = \mathcal V(x(t),p(t))$ (see Lemma \ref{lemma:passivity_edm}). To see that $(ii)$ holds, recall that $p$ and $\dot p$ are both bounded (see Remark \ref{rem:bounded}), and hence by the Lipschitz continuity of $\mathcal V$ (see Definition \ref{def:edm}), there is $\delta_2>0$ for which ${\dot p^T(\tau) \mathcal V(x(\tau), p(\tau)) \leq \delta_2}$ holds for $\tau \geq 0$. This immediately yields $(ii)$.

Since $\mathcal S (x(a_j), p(a_j)) \leq \frac{\epsilon}{2}$ for every $j$ in $\mathbb J$ and $\lim_{j \to \infty} \mathfrak L \left( \mathbb I_j \right) = 0$, from \eqref{eq:lemma_stability_06}, we can see that $\mathcal S (x(\overline t_j), p(\overline t_j)) < \epsilon$ for sufficiently large $j$ in $\mathbb J$. This contradicts the hypothesis that $\mathcal S (x(\overline t_j), p(\overline t_j)) \geq \epsilon$ holds for all $j$ in $\mathbb J$. Hence, we can infer that, for each $\epsilon > 0$, there exists $T_\epsilon > 0$ for which $\mathcal S (x(t), p(t)) < \epsilon, ~ \forall t \geq T_{\epsilon}$ from which we conclude that $\lim_{t \to \infty} \mathcal S (x(t), p(t)) = 0$. \QED


\paragraph{Proof of Lemma \ref{lem:NashStationMainLemma}}
Since the EDM is Nash stationary and has an informative $\delta$-storage function $\mathcal S$, according to Lemma \ref{lemma:ns_minimizer}, the following relations hold:
\begin{align} \label{eq:lemma_stability_ne_01}
  \mathcal S(z,r) = 0 &\iff \mathcal V(z,r) = 0 \nonumber \\
                      & \iff z \in \argmax_{\bar z \in \mathbb X} \bar z^T r
\end{align}
Using Lemma \ref{lem:MainLemma}, we have that 
\begin{align} \label{eq:lemma_stability_ne_02}
  \lim_{t \to \infty} \mathcal S(x(t), p(t)) = 0
\end{align}

According to Remark \ref{rem:bounded}, without loss of generality, we may assume that there is a positive constant $\rho$ for which the deterministic payoff $p(t)$ satisfies $\|p(t)\| \leq \rho, ~ t \geq 0$, and we may redefine the set of stationary points of the EDM as ${\mathbb S = \{(z,r) \in \mathbb X \times \mathbb R^n \,|\, z \in \argmax_{\bar z \in \mathbb X} \bar z^T r \text{ and } \|r\| \leq \rho\}}$. Note that $\mathbb S$ is a closed set, and hence it is compact.

By \eqref{eq:lemma_stability_ne_01}, \eqref{eq:lemma_stability_ne_02}, and Remark \ref{rem:bounded}, it holds that 
\begin{align} \label{eq:lemma_stability_ne_03}
  &\lim_{t \to \infty} \left( \inf_{(z,r) \in \mathbb S} \left[ \|x(t) - z\| + \|p(t) - r\| \right] \right) = 0 \\
  &\lim_{t \to \infty} \|\dot x(t)\| = 0
\end{align}
and in conjunction with Assumption~\ref{assumption:stationary_game}, we have that
\begin{align} \label{eq:lemma_stability_ne_09}
  \lim_{t \to \infty} \|p(t) - \bar{\mathcal F}(x(t))\| = 0
\end{align}
Also note that in conjunction with \eqref{eq:lemma_stability_ne_03}-\eqref{eq:lemma_stability_ne_09}, using the fact that
\begin{align} \label{eq:lemma_stability_ne_04}
  (z, \bar{\mathcal F}(z)) \in \mathbb S \iff z \in \mathbb{NE}(\bar{\mathcal F})
\end{align}
we can derive the following inequality:
\begin{align} \label{eq:lemma_stability_ne_05}
  & \lim_{t \to \infty} \left( \inf_{(z,r) \in \mathbb S} \left[ \|x(t) - z\| + \|\bar{\mathcal F}(x(t)) - r \| \right] \right) \nonumber \\
  &\leq \lim_{t \to \infty} \left( \inf_{(z, r) \in \mathbb S} \left[ \|x(t) - z\| + \| p(t) - r\| \right] \right) \nonumber \\
  & \qquad + \lim_{t \to \infty} \|p(t) - \bar{\mathcal F}(x(t)) \|  = 0
\end{align}

To show global attractiveness of $\mathbb{NE}(\bar{\mathcal F})$,
in what follows, we prove that \eqref{eq:lemma_stability_ne_05} yields
\begin{align} \label{eq:lemma_stability_ne_06}
  \lim_{t \to \infty} \left( \inf_{(z,\bar{\mathcal F}(z)) \in \mathbb S} \left[ \|x(t) - z\| + \|\bar{\mathcal F}(x(t)) - \bar{\mathcal F}(z) \| \right] \right) = 0
\end{align}
and we conclude that
\begin{align}
  & \lim_{t \to \infty} \inf_{z \in \mathbb{NE} (\bar{\mathcal F})} \|x(t) - z\| \nonumber \\
  & = \lim_{t \to \infty} \inf_{(z,\bar{\mathcal F}(z)) \in \mathbb S} \|x(t) - z\| \nonumber \\
  & \leq \lim_{t \to \infty} \left( \inf_{(z,\bar{\mathcal F}(z)) \in \mathbb S} \left[ \|x(t) - z\| + \|\bar{\mathcal F}(x(t)) - \bar{\mathcal F}(z) \| \right] \right) = 0
\end{align}

By contradiction, suppose that there is a sequence of increasing time indices $\{t_n\}_{n=1}^\infty$ for which the sequence $\{x(t_n)\}_{n=1}^\infty$ converges and it holds that
\begin{align} \label{eq:lemma_stability_ne_07}
  \lim_{n \to \infty} \left( \inf_{(z,r) \in \mathbb S} \left[ \|x(t_n) - z\| + \|\bar{\mathcal F}(x(t_n)) - r \| \right] \right) = 0
\end{align}
but
\begin{align} \label{eq:lemma_stability_ne_08}
  \lim_{n \to \infty} \left( \inf_{(z,\bar{\mathcal F}(z)) \in \mathbb S} \left[ \|x(t_n) - z\| + \|\bar{\mathcal F}(x(t_n)) - \bar{\mathcal F}(z) \| \right] \right) > 0
\end{align}
Since $\mathbb S$ is compact, there is a converging sequence $\{(z_n, r_n)\}_{n=1}^\infty$ for which its limit point $(z^\ast, r^\ast)$ is contained in $\mathbb S$ and it holds that
\begin{align}
  \|x^\ast - z^\ast\| + \|\bar{\mathcal F}(x^\ast) - r^\ast\| = 0
\end{align}
where $x^\ast$ is the limit of $\{x(t_n)\}_{n=1}^\infty$. Hence, we have that $(x^\ast, \bar{\mathcal F}(x^\ast)) \in \mathbb S$. 

On the other hand, by a similar argument, from \eqref{eq:lemma_stability_ne_08}, we can show that $x^\ast$ is not a Nash equilibrium; otherwise \eqref{eq:lemma_stability_ne_08} should converge to zero. This is a contradiction and proves that $\mathbb{NE} (\bar{\mathcal F})$ is globally attractive.

In what follows, we prove Lyapunov stability of $\mathbb{NE} (\bar{\mathcal F})$. Recall that if the PDM is $\delta$-antipassive, there is a continuously differentiable map $\mathcal L: \mathbb X \times \mathbb R^n \to \mathbb R_+$ for which the following two relations are true:
\begin{align}
  &\mathcal L(z,s) = 0 \iff \mathcal H(s,z) = \bar{\mathcal F}(z) \label{eq:antipassivity_01} \\
  &\frac{\mathrm d}{\mathrm dt} \mathcal L(x(t), q(t)) \leq - \dot p^T(t) \dot x(t) \label{eq:antipassivity_02}
\end{align}
In conjunction with \eqref{eq:lemma_stability_ne_01} and Lemma~\ref{lemma:passivity_edm}, using \eqref{eq:antipassivity_01} and \eqref{eq:antipassivity_02}, we have that
\begin{align}
&\mathcal S(z, \mathcal H(s,z)) + \mathcal L(z, s) = 0 \nonumber \\
& \qquad \iff \mathcal H(s,z) = \bar{\mathcal F}(z) \text{ and } z \in \mathbb{NE}(\bar{\mathcal F}) \label{eq:lyapunv_stability_01}\\
&\frac{\mathrm d}{\mathrm dt} \left[ \mathcal S(x(t), \mathcal H(q(t), x(t))) + \mathcal L(x(t), q(t)) \right] \leq 0 \label{eq:lyapunv_stability_02}
\end{align}

Let $\mathbb A$ be a subset of $\mathbb X \times \mathbb R^n$ defined by
\begin{align*}
  \mathbb A = \left\{ (z,s) \in \mathbb X \times \mathbb R^n \, \big| \, \mathcal S(z, \mathcal H(s,z)) + \mathcal L(z, s) = 0 \right\}
\end{align*}
Note that according to Assumption \ref{assumption:stationary_game}, the set $\mathbb A$ is a compact subset or $\mathbb F \times \mathbb R^n$, where the set $\mathbb F$ is given by
\begin{align*}
  \mathbb F = \{z \in \mathbb X \, \big| \, \mathcal S(z, \bar{\mathcal F}(z)) = 0\}
\end{align*}

Let $\mathbb O$ be a given open set containing $\mathbb A$. For the former case, without loss of generality, suppose that $\mathbb O$ is bounded; otherwise, we can select a bounded open set $\mathbb V$ containing $\mathbb A$ and proceed with the intersection $\mathbb O \cap \mathbb V$. Define a constant $\alpha$ as $\alpha = \min_{(z,s) \in \textrm{bd} (\mathbb O)  \cap (\mathbb X \times \mathbb R^n) } \left[ \mathcal S(z, \mathcal H(s,z)) + \mathcal L(z, s) \right]$. With a constant $\beta$ satisfying $\alpha > \beta > 0$, we consider an open set defined by $\mathbb O_{\beta} = \left\{ (z,s) \in \mathbb O  \cap (\mathbb X \times \mathbb R^n) \, \big| \, \mathcal S(z, \mathcal H(s,z)) + \mathcal L(z, s) < \beta\right\}$. Note that since $\frac{\mathrm d}{\mathrm dt} \left[ \mathcal S(x(t), \mathcal H(q(t), x(t))) + \mathcal L(x(t), q(t)) \right] \leq 0$, any trajectory starting from $\mathbb O_{\beta}$ should remain in $\mathbb O$; otherwise, there should exist a time index $t_1$ for which $\mathcal S(x(t_1), \mathcal H(q(t_1), x(t_1))) + \mathcal L(x(t_1), q(t_1)) \geq \alpha > \beta$ holds, which is a contradiction.

For the later case, the set $\mathbb O$ can be written as $\mathbb O = \mathbb U \times \mathbb R^n$ where $\mathbb U$ is an open set containing $\mathbb F$. Without loss of generality, we assume that $\mathbb U$ is a bounded subset of $\mathbb R^n$; otherwise, we can select a bounded open set $\mathbb V$ containing $\mathbb F$ and proceed the proof with the intersection $\mathbb U \cap \mathbb V$. Define $\alpha = \min_{z \in \textrm{bd} (\mathbb U) \cap \mathbb X}  \mathcal S(z, \bar{\mathcal F}(z))$. With a constant $\beta$ satisfying $\alpha > \beta > 0$, we consider an open set defined by $\mathbb U_{\beta} = \left\{ z \in \mathbb U \, \big| \, \mathcal S(z, \bar{\mathcal F}(z)) < \beta\right\}$. Note that since $\frac{\mathrm d}{\mathrm dt} \mathcal S(x(t), \bar{\mathcal F}(x(t))) \leq 0$, any trajectory starting from $\mathbb U_\beta$ should remain in $\mathbb U$; otherwise, there should exist a time index $t_1 > 0$ for which $\mathcal S(x(t_1), \bar{\mathcal F}(x(t_1))) \geq \alpha > \beta$ holds, which is a contradiction. This completes the proof of lemma. \QED

\subsection{Proofs of Propositions \ref{prop:IEPT-PAssive} and \ref{prop:DeltaPassivityPBR}, and Theorem \ref{thm:PBRConvergence}} \label{appendix_b}

\paragraph{Proof of Proposition \ref{prop:IEPT-PAssive}}
We first note that the acuteness condition \eqref{eq:EPTAcuteness} implies the so-called \textit{Positive Correlation} \cite{Sandholm2005Excess-payoff-d} which is described as follows:
\begin{align}
  \mathcal V(z, r) \neq 0 \implies r^T \mathcal V(z, r) > 0 \label{eq:strict_positive_correlation}
\end{align}
where $\mathcal V$ is a map defined by the protocol $\mathcal T^\textit{EPT}$ as in Definition~\ref{def:protocol}.

Let $\mathcal I^\textit{EPT}$ be the revision potential of $\mathcal T^\textit{EPT}$ as defined in \eqref{eq:revision_potential}. It can be verified that the gradients of $\mathcal I^\textit{EPT}$ with respect to $r$ and $z$, respectively, satisfy
\begin{subequations}
  \begin{align}
    m \, \nabla_r \mathcal I^\textit{EPT}(\hat r) &=  \mathcal V(z, r) \label{eq:prop_ept_passivity_04} \\
    m \, \nabla_z^T \mathcal I^\textit{EPT}(\hat r) \mathcal V(z, r) &= -\left( r^T \mathcal V(z, r)\right) \sum_{i=1}^n \mathcal T_i^\textit{EPT}(\hat r)  \label{eq:prop_ept_passivity_01}
  \end{align}
\end{subequations}
Let us select a candidate $\delta$-storage function as $\mathcal S^\textit{EPT}(z, r) = m \, \mathcal I^\textit{EPT}(\hat r) - \gamma$ for some constant $\gamma$. Due to \eqref{eq:prop_ept_passivity_04}, the function $\mathcal S^\textit{EPT}$ satisfies \eqref{eq:passivity_characterization_01}. In conjunction with the fact that $\mathcal T^\textit{EPT}(\hat r) = 0$ implies $\mathcal V(z, r) = 0$, due to \eqref{eq:strict_positive_correlation} and \eqref{eq:prop_ept_passivity_01}, we can see that \eqref{eq:passivity_characterization_02} holds with $\eta = 0$ and the equality in \eqref{eq:passivity_characterization_02} holds only if $\mathcal V(z,r) = 0$.

Suppose that $\mathcal I^\textit{EPT}$ also satisfies the following inequality for every $\hat r$ in $\mathbb R_\ast^n$:
\begin{align} \label{eq:prop_ept_passivity_02}
  \mathcal I^\textit{EPT}(\hat r) \geq \mathcal I^\textit{EPT}(0)
\end{align}
Then by defining $\gamma = m \, \mathcal I^\textit{EPT}(0)$, we conclude that $ m \, \mathcal I^\textit{EPT}(\hat r) - \gamma$ is nonnegative.

Based on the aforementioned arguments and Lemma~\ref{lemma:passivity_edm}, the EPT EDM is $\delta$-passive with a $\delta$-storage function given by $\mathcal S^\textit{EPT}(z,r) = m \, \mathcal I^\textit{EPT}(\hat r) - m \, \mathcal I^\textit{EPT}(0)$. In addition, in conjunction with Remark~\ref{rem:EPTNashStationary} and Lemma~\ref{lemma:ns_minimizer}, we conclude that $S^\textit{EPT}$ is informative and \eqref{eq:InverseIEPTStorage} holds. In what follows, we show that \eqref{eq:prop_ept_passivity_02} is valid for every $\hat r$ in $\mathbb R_\ast^n$.

We first claim that \eqref{eq:prop_ept_passivity_02} holds for all $(z,r)$ in the set $\mathbb S^\textit{EPT}$ of stationary points of the EPT EDM.
By \eqref{eq:prop_ept_passivity_04}, for fixed $z$ in $\mathbb X$, the following equality holds for all $r$ in $\mathbb R^n$:
\begin{align} \label{eq:prop_ept_passivity_05}
  \mathcal I^\textit{EPT} \left( \hat r\right) - \mathcal I^\textit{EPT} \left(  0 \right) = \frac{1}{m} \int_0^1  r^T \mathcal V(z, \sigma r) \,\mathrm d \sigma
\end{align}
where $\hat r_i = r_i-\frac{1}{m}\sum_{i=1}^nr_i z_i$. Since the EPT EDM is Nash stationary (see Remark \ref{rem:EPTNashStationary}), $(z,r) \in \mathbb S^\textit{EPT}$ implies $(z,\sigma r) \in \mathbb S^\textit{EPT}$ for all $\sigma$ in $[0,1)$, and by \eqref{eq:prop_ept_passivity_05}, for each $(z,r)$ in $\mathbb S^\textit{EPT}$, we have that 
\begin{align} \label{eq:prop_ept_passivity_03}
\mathcal I^\textit{EPT}(\hat r) = \mathcal I^\textit{EPT}(0)
\end{align}
Since \eqref{eq:prop_ept_passivity_03} holds for every $(z,r)$ in $\mathbb S^\textit{EPT}$, this proves the claim.

To see that \eqref{eq:prop_ept_passivity_02} extends to the entire domain $\mathbb X \times \mathbb R^n$, by contradiction, let us assume that there is $(z', r') \notin \mathbb S^\textit{EPT}$ for which $\mathcal S^\textit{EPT} (z', r') + \gamma = m \, \mathcal I^\textit{EPT}(\hat r') < m \, \mathcal I^\textit{EPT}(0)$ holds, where $\hat r_i' = r_i'-\frac{1}{m}\sum_{i=1}^nr_i' z_i'$. Let $x(t)$ be the mean population state induced by the EPT EDM with the initial condition $x(0) = z'$ and the constant deterministic payoff $p(t) = r', ~ t \geq 0$. By \eqref{eq:strict_positive_correlation} and \eqref{eq:prop_ept_passivity_01}, the value of $\mathcal S^\textit{EPT} (x(t), p(t))$ is strictly decreasing unless $\mathcal V(x(t), p(t)) = 0$. By the hypothesis that $\mathcal S^\textit{EPT} (z', r') + \gamma < m \, \mathcal I^\textit{EPT}(0)$ and by \eqref{eq:prop_ept_passivity_03}, for every $(z,r)$ in $\mathbb S^\textit{EPT}$, it holds that $\mathcal S^\textit{EPT} (z',r') < \mathcal S^\textit{EPT} (z,r)$ and the state $\left( x(t), p(t) \right)$ never converges to $\mathbb{S}^\textit{EPT}$. On the other hand, by LaSalle's Theorem \cite{Khalil1995Nonlinear-syste}, since $p(t)$ is constant and the mean population state $x(t)$ is contained in a compact set, $x(t)$ converges to an invariant subset of $\left\{ z \in \mathbb X \,\big|\, \nabla_z^T \mathcal S^\textit{EPT} (z, r') \mathcal V(z, r') = 0\right\}$. By \eqref{eq:strict_positive_correlation} and \eqref{eq:prop_ept_passivity_01}, the invariant subset is contained in $\mathbb S^\textit{EPT}$. This contradicts the previous argument that the state $(x(t), p(t))$ does not converge to $\mathbb S^\textit{EPT}$. This proves that $\mathcal I^\textit{EPT} (\hat r) \geq \mathcal I^\textit{EPT}(0)$ holds for all $(z,r)$ in $\mathbb X \times \mathbb R^n$. \QED

\paragraph{Proof of Proposition \ref{prop:DeltaPassivityPBR}}
The analysis used in \cite[Theorem 2.1]{Hofbauer2002On-the-global-c} suggests that the following hold: For all $r$ in $\mathbb R^n$, $z$ in $\mathbb X$, and $\tilde z$ in $\mathbb{TX}$,
\begin{subequations} \label{eq:prop_perturbed_dynamics_sop_01}
  \begin{align}
    \tilde z^T \nabla_r \left[ \max_{\bar z \in \mathrm{int}(\mathbb{X})} (r^T \bar z - \mathcal Q(\bar z)) \right] = \tilde z^T \mathcal M^{\mathcal Q}(r)
  \end{align}
  \begin{align}
    \tilde z^T \nabla \mathcal Q(z) = \tilde z^T r \text{ if and only if } z = \mathcal M^{\mathcal Q}(r) \label{eq:prop_perturbed_dynamics_sop_01_b}
  \end{align}
\end{subequations}
Using \eqref{eq:prop_perturbed_dynamics_sop_01}, we can see that
\begin{align}
  \nabla_r \mathcal S^{\textit{PBR}}(z,r) = \mathcal M^{\mathcal Q}(r)-z = \tilde{\mathcal V}^{\mathcal Q}(z,r)
\end{align}
and
\begin{align} \label{eq:prop_perturbed_dynamics_sop_02}
  \nabla_z^T \mathcal S^{\textit{PBR}}(z, r) \tilde{\mathcal V}^{\mathcal Q}(z, r) &= - \left( r - \nabla \mathcal Q(z) \right)^T \tilde{\mathcal V}^{\mathcal Q}(z, r) \nonumber \\
	&= -\left( \nabla \mathcal Q(w) - \nabla \mathcal Q (z)\right)^T \left( w - z \right)
\end{align}
where $w = \mathcal M^{\mathcal Q}(r)$. By the fact that $\mathcal Q$ is strictly convex, it holds that $\nabla_z^T \mathcal S^{\textit{PBR}} (z,r) \tilde{\mathcal V}^{\mathcal Q}(z,r) \leq 0$ where the equality holds only if $\tilde{\mathcal V}^{\mathcal Q}(z,r) = 0$, which, by the definition of $\mathcal S^\textit{PBR}$, is equivalent to $\mathcal S^\textit{PBR}(z,r) = 0$. By Lemma \ref{lemma:passivity_edm} and Remark \ref{remark:perturbed_stationarity}, the PBR EDM is $\delta$-passive and $\mathcal S^{\textit{PBR}}$ is its associated informative $\delta$-storage function. This proves Case I.

Furthermore, if the perturbation $\mathcal Q$ satisfies $\tilde z^T \nabla^2 \mathcal Q(z) \tilde z \geq \eta^\ast \tilde z^T \tilde z$ for all $z \in \mathbb X, \, \tilde z \in \mathbb{TX}$, i.e., $\mathcal Q$ is strongly convex, then it holds that $\left( \nabla \mathcal Q(w) - \nabla \mathcal Q(z)\right)^T \left( w - z \right) \geq \eta^\ast \left\| w-z\right\|^2$ for all $w,z$ in $\mathbb X$. Using \eqref{eq:prop_perturbed_dynamics_sop_02} we can derive $\nabla_z^T \mathcal S^{PBR} (z, r) \tilde{\mathcal V}^{\mathcal Q}(z, r) \leq -\eta^\ast \tilde{\mathcal V}^{\mathcal Q}(z, r)^T \tilde{\mathcal V}^{\mathcal Q}(z, r)$. By Lemma \ref{lemma:passivity_edm} and Remark \ref{remark:perturbed_stationarity}, the PBR EDM is $\delta$-passive with surplus $\eta^\ast$ and $\mathcal S^{\textit{PBR}}$ is its associated informative $\delta$-storage function. This proves Case II. \QED

\paragraph{Proof of Theorem \ref{thm:PBRConvergence}}
We proceed with recalling that according to Proposition \ref{prop:DeltaPassivityPBR}, $\mathcal S^{\textit{PBR}}$ given in \eqref{eq:s_pbr} is an informative $\delta$-storage function of the PBR EDM and satisfies
\begin{align}
\mathcal S^\textit{PBR}(z,r) = 0 &\iff \tilde{\mathcal V}^{\mathcal Q}(z,r) = 0 \nonumber \\
& \iff z = \argmax_{\bar z \in \mathrm{int}(\mathbb X)} (\bar z^T r - \mathcal Q(\bar z))
\end{align}

According to Remark \ref{rem:bounded}, without loss of generality, we may assume that there is a positive constant $\rho$ for which the deterministic payoff $p(t)$ satisfies $\|p(t)\| \leq \rho, ~ t \geq 0$, and we may redefine the set of stationary points of the PBR EDM as 
\begin{align*}
	\mathbb S^{\textit{PBR}} = \big\{ (z,r) \in \mathrm{int}(\mathbb X) \times \mathbb R^n \, \big|\, &z = \argmax_{\bar z \in \mathrm{int}(\mathbb X)} (\bar z^T r - \mathcal Q(\bar z)) \\
    & \qquad \qquad \text{ and } \|r\| \leq \rho \big\}
\end{align*}
which asserts that $\mathbb S^{\textit{PBR}}$ is a bounded set. To show that $\mathbb S^{\textit{PBR}}$ is closed, and hence compact, consider any sequence $\{(z_n, r_n)\}_{n=1}^\infty$ in $\mathbb S^{\textit{PBR}}$. Since it is bounded, $\{(z_n, r_n)\}_{n=1}^\infty$ has a convergent subsequence, and by definition of $\mathbb S^{\textit{PBR}}$, the limit point of the subsequence belongs to $\mathbb S^{\textit{PBR}}$, which shows that $\mathbb S^{\textit{PBR}}$ is a compact set.


The rest of the proof on establishing global attractiveness and Lyapunov stability of $\mathbb{PE}(\bar{\mathcal F}, \mathcal Q)$ is analogous to the arguments used in the proof of Lemma \ref{lem:NashStationMainLemma}. We omit the details for brevity \QED

\subsection{Proof of Propositions \ref{prop:SmoothingAnticipatoryPDMStability} and \ref{prop:SmoothingPDMStability_Potential}} \label{appendix_c}

\paragraph{Proof of Proposition \ref{prop:SmoothingAnticipatoryPDMStability}} In order to prove i), ii), and iii) we need to establish that the PDM satisfies the weak $\delta$-antipassivity condition~(\ref{eq:WDAntipPDM}).

We start by using the linear superposition principle to write the solution of (\ref{eq:SmoothingPDM}) as $p=p^h+p^f$, where $p^h$ is the solution assuming that $\dot{u}$ is zero and $p^f$ assumes that ${u(0)=0}$, ${p(0)=0}$, and ${\bar{r}=0}$. Namely, $p^f$ captures the effect of $\dot{u}$ and $p^h$ is the solution of the initial value problem with respect to given $u(0)$, $p(0)$, and $\bar{r}$. We proceed with analyzing the two terms on the right-hand side below:
\begin{equation}
\int_0^t \dot{u}^T (\tau) \dot{p} (\tau) d \tau = \int_0^t \dot{u}^T (\tau) \dot{p}^{f} (\tau) \, \mathrm d \tau + \int_0^t \dot{u}^T (\tau) \dot{p}^{h} (\tau) \, \mathrm d \tau
\end{equation}
The explicit solution for the homogeneous part gives ${\dot{p}^h(t) = (\alpha \mu_1 -  \alpha^2 \mu_2)  e^{-\alpha t} \big ( Fu(0) +\bar{r}  -  q(0) \big ) }$. In particular, the following inequality holds:
\begin{equation}
\int_0^t  \dot{u}^T (\tau) \dot{p}^h (\tau) \, \mathrm d \tau \leq \mathcal{A}(q(0),\|\dot{u}\|), \quad q(0) \in \mathbb{R}^n, u \in \mathfrak{X}
\end{equation} where the map $\mathcal{A}:\mathbb{R}^n \times \mathbb{R}_+ \rightarrow \mathbb{R}_+$ can be chosen as:
\begin{equation}
\mathcal{A}(q(0),\|\dot{u}\|) = |\mu_1-\mu_2\alpha| \|\dot{u}\| \max_{z \in \mathbb X} \left\| Fz + \bar r - q(0)\right\|
\end{equation}

We now proceed with showing that \underline{i) and ii) hold} by determining when the following inequality is satisfied:
\begin{equation}
\label{eq:passivityPosRealL}
\int_0^t \dot{u}^T (\tau) \dot{p}^{f} (\tau) \, \mathrm d \tau \leq \lambda^* \int_0^{t} \dot{u}^T (\tau) \dot{u}(\tau) \, \mathrm d \tau, \quad t \geq 0, \ u \in \mathfrak{X}
\end{equation} Here, we can use the fact that the map from $\dot{u}$ to $\dot{p}^f$ is linear time invariant and characterized by the transfer function matrix $\frac{\alpha+\bar{\mu} s}{\alpha + s} F$, where $s$ is the complex Laplace variable and $\bar{\mu}$ is defined as $\mu_0+\alpha \mu_2$. At this point we can use Parseval's Theorem, which guarantees that (\ref{eq:passivityPosRealL}) holds if the following inequality is satisfied:
\begin{multline} 
 z^H \left ( \frac{\alpha + \bar{\mu} j \omega}{\alpha + j \omega} F + \frac{\alpha - \bar{\mu} j \omega}{\alpha - j \omega} F^T \right ) z \leq 2 \lambda^* z^Hz, \\ \omega \in \mathbb{R}, \ z \in \mathbb{TC}
\end{multline} where $H$ represents the Hermitian operator (transpose conjugate) and $\mathbb{TC} \overset{\mathrm{def}}{=} \{z \in \mathbb{C}^n | \sum_{i=1}^n z_i = 0 \}$. Since $(\alpha+j\omega)$ is nonzero for all real $\omega$, we can re-state the condition as follows:
\begin{multline} 
z^H (\alpha^2+\bar{\mu} \omega^2 + j \omega (\bar{\mu} \alpha -\alpha)) F  z + \\ z^H (\alpha^2+\bar{\mu} \omega^2 - j \omega (\bar{\mu} \alpha -\alpha)) F^T z \leq  \\ 2 \lambda^* (\alpha^2 + \omega^2) z^Hz, \\ \omega \in \mathbb{R}, \ z \in \mathbb{TC}
\end{multline} Rearranging terms leads to yet another equivalent inequality:
\begin{multline} 
 (\alpha^2+\bar{\mu} \omega^2) z^H (F+F^T) z + \\ j \omega (\bar{\mu} \alpha -\alpha) z^H (F - F^T )z \leq \\ 2 \lambda^* (\alpha^2 + \omega^2) z^Hz, \\ \omega \in \mathbb{R}, \ z \in \mathbb{TC}
\end{multline} 
Now, notice that because the statement of the proposition requires $\Phi F \Phi$ to be symmetric,  the second term of the left-hand side above vanishes and the inequality becomes:
\begin{equation} 
z^T F z  \leq  \lambda^* \frac{\alpha^2 + \omega^2}{\alpha^2+ \bar{\mu} \omega^2} z^Tz, \quad \omega \in \mathbb{R}, \ z \in \mathbb{TX}
\end{equation}  

Notice that above we no longer need to consider $z$ in $\mathbb{TC}$. More specifically, we can simply restrict $z$ to $\mathbb{TX}$ because otherwise the imaginary components of $z$ would cancel out. The proof for i) and ii) is concluded once we realize that if $\lambda^*=0$ or $\lambda^*>0$ and $ \bar{\mu} \leq 1$ then the inequality above is satisfied. In order to show that iii) is also true, we need to determine when the following inequality holds:
\begin{equation}
\label{eq:passivityPosRealLv2}
\int_0^t \dot{u}^T (\tau) \dot{p}^{f} (\tau) \, \mathrm d \tau \leq \bar{\mu} \lambda^* \int_0^{t} \dot{u}^T (\tau)\dot{u}(\tau) \, \mathrm d \tau, \quad t \geq 0, \ u \in \mathfrak{X}
\end{equation}
By proceeding in a way that is analogous to our proof of i) and ii), we can show that (\ref{eq:passivityPosRealLv2}) is equivalent to:
\begin{equation} 
z^T F z  \leq \bar{\mu}  \lambda^* \frac{\alpha^2 + \omega^2}{\alpha^2 + \bar{\mu} \omega^2} z^Tz, \quad \omega \in \mathbb{R}, \ z \in \mathbb{TX}
\end{equation} 
The proof of iii) is concluded once we realize that if $\lambda^*>0$ and $\bar{\mu} >1$ then the inequality above holds. \QED

\paragraph{Proof of Proposition \ref{prop:SmoothingPDMStability_Potential}}
Define $f^\ast$ as the Legendre conjugate of $f$ given by
$$f^\ast(s) \overset{\text{def}}{=} \sup_{y \in \mathbb R^n} (f(y) - s^Ty)$$
Note that $\mathbb D^\ast$ is the domain of $f^\ast$, which is a convex set, and its interior $\mathrm{int}(\mathbb D^\ast)$ is an open convex set.

We first show that, for any $u \in \mathfrak X$, the set $\mathrm{int}(\mathbb D^\ast)$ is positively invariant, i.e., $q(t) \in \mathrm{int}(\mathbb D^\ast),~ \forall t \geq 0$. Let $\mathbb F$ be a closed convex subset of $\mathrm{int}(\mathbb D^\ast)$ containing $\mathrm{Im}(\mathcal F)$ and $q(0)$.
Based on Lemma~\ref{lemma:convex_invariance}, we can infer that $q(t) \in \mathbb F \subset \mathrm{int}(\mathbb D^\ast)$ for all $t \geq 0$. This proves the positive invariance of $\mathrm{int}(\mathbb D^\ast)$.

Note that it follows from continuous differentiability of $f$ and \cite[Theorem 26.5]{Rockafellar1996Convex-analysis} that $\mathcal L$ is also continuously differentiable. Hence, to show $\delta$-antipassivity of the PDM, according to Definition~\ref{def:EquilibriumStability}, it suffices to show that 
\begin{align} \label{eq:smoothing_pdm_antipassivity_01}
\mathcal L(z,s) = 0 \iff \mathcal F(z) = s
\end{align}
holds for all $z$ and $s$ in $\mathbb X$ and $\mathbb R^n$, respectively, and 
\begin{align} \label{eq:smoothing_pdm_antipassivity_02}
\frac{\mathrm d}{\mathrm d t} \mathcal L(u(t),q(t)) \leq - \dot p^T(t) \dot u(t)
\end{align}
holds for all $q(0)$ in $\mathrm{int}(\mathbb D^\ast)$ and $u$ in $\mathfrak X$.

To see that \eqref{eq:smoothing_pdm_antipassivity_01} is valid, note that the function $\mathcal L$ is defined over $\mathbb R^n \times \mathbb R^n$ and $\mathcal L(z,s) = 0$ implies $\nabla_z \mathcal L(z,s) = 0$ which, according to \eqref{eq:PDM_antistorage}, ensures that $\mathcal F(z) = s$. Conversely, if $\mathcal F(z) = s$ then, by strict concavity of $f$, it holds that $\sup_{y \in \mathbb R^n} (f(y) - s^Ty) = f(z) - s^Tz$, which ensures that  $\mathcal L(z,s) = 0$.

To show \eqref{eq:smoothing_pdm_antipassivity_02}, we compute the time-derivative of \eqref{eq:PDM_antistorage} as follows:
\begin{align*}
  &\frac{\mathrm d}{\mathrm d t} \mathcal L(u(t), q(t)) \nonumber \\
  &= \nabla_u^T \mathcal L(u(t),q(t)) \dot u(t) + \nabla_q^T \mathcal L(u(t), q(t)) \dot q(t) \\
  &= -\dot p^T(t) \dot u(t) + \alpha^2 (\nabla f^\ast (q(t)) + u(t))^T (\mathcal F(u(t)) - q(t)) \\
  &\overset{(i)}{=} -\dot p^T(t) \dot u(t) + \alpha^2 (u(t) - v(t))^T (\nabla f(u(t)) - \nabla f(v(t)))\\
  &\overset{(ii)}{\leq} - \dot p^T(t) \dot u(t)
\end{align*}
where to derive $(i)$ and $(ii)$, we use strict concavity of $f$, the positive invariance of $\mathrm{int}(\mathbb D^\ast)$, and \cite[Theorem 26.5]{Rockafellar1996Convex-analysis} to show that $q(t) =  \nabla f(v(t))$ whenever $\nabla f^\ast (q(t)) = -v(t)$. \QED

\bibliography{MartinsRefs}
\bibliographystyle{IEEEtran}

\end{document}